\theoremstyle{plain}
\newtheorem{thm}{Theorem}[section]
\newtheorem{prop}[thm]{Proposition}
\newtheorem{lem}[thm]{Lemma}
\newtheorem{cor}[thm]{Corollary}
\theoremstyle{definition}
\theoremstyle{remark}
\newtheorem{rem}{Remark}[section]
\numberwithin{equation}{section}
\begin{document}

\title[]{Invariant subalgebras of the small $\mathcal{N}=4$ superconformal algebra } %and an application to the chiral de Rham complex}

\author{Thomas Creutzig}
\author{Andrew R. Linshaw}
\author{Wolfgang Riedler}

\address{Department of Mathematical and Statistical Sciences, University of Alberta, Edmonton, Alberta  T6G 2G1, Canada.}
\email{creutzig@ualberta.ca}
\email{riedler@ualberta.ca}

\address{Department of Mathematics, University of Denver, Denver, CO 80208, U.S.A.}
\email{andrew.linshaw@du.edu}

\thanks{T. C. is supported by NSERC Discovery Grant RES0019997. A. L. is supported by Simons Foundation Grant 635650 and NSF Grant DMS-2001484. We thank the referees for their helpful comments and suggestions.}

\date{} 

\maketitle

\begin{abstract}
Various aspects of orbifolds and cosets of the small $\mathcal{N}=4$ superconformal algebra are studied. First, we determine minimal strong generators for generic and specific levels. As a corollary, we obtain the vertex algebra of global sections of the chiral de Rham complex on any complex Enriques surface. We also identify orbifolds of cosets of the small $\mathcal{N}=4$ superconformal algebra with $\text{Com}(V^{\ell}(\mathfrak{sl}_2), V^{\ell+1}(\mathfrak{sl}_2) \otimes \mathcal{W}_{-5/2}(\mathfrak{sl}_4, f_{\text{rect}}))$  and in addition at special levels with Grassmanian cosets and principal $\mathcal{W}$-algebras of type $A$ at degenerate admissible levels. These coincidences lead us to a novel level-rank duality involving Grassmannian supercosets.
\end{abstract}

% \tableofcontents

%--------------------------------------------------------------------------------------------------------%
\section{Introduction}\label{sec:Introduction}
%-------------------------------

The small $\mathcal{N}=4$ superconformal algebra $V^k(\mathfrak n_4)$ at level $k$ is a highly interesting family of vertex operator superalgebras. 
It is defined as the minimal $\mathcal{W}$-superalgebra of the universal affine vertex superalgebra $V^{-k-1}(\mathfrak{psl}_{2|2})$ of  $\mathfrak{psl}_{2|2}$ at level $-k-1$, and its affine subalgebra is the universal affine vertex algebra $V^k(\mathfrak{sl}_2)$ of $\mathfrak{sl}_2$ at level $k$. Moreover it has four dimension $3/2$ odd fields, hence the name $\mathcal{N}=4$ superconformal algebra. This algebra is a key ingredient in various problems of physics, as string theory on $K3$ surfaces \cite{Eguchi:1988af} and hence Mathieu moonshine \cite{Eguchi:2010ej}, the $AdS_3/CFT_2$ correspondence \cite{Maldacena:1997re} and as  chiral algebras of certain four-dimensional super Yang-Mills theories \cite{Bonetti:2018fqz}.
At level $k=1$, the simple quotient $L_1(\mathfrak{n}_4)$ of $V^1(\mathfrak{n}_4)$ appears as the algebra of global sections of the chiral de Rham complex of $K3$ surfaces \cite{2016arXiv160903688S}. More generally, for all integers $n\geq 1$ it is expected that $L_n(\mathfrak{n}_4)$ coincides with the algebra of global sections of the chiral de Rham complex of $2n$ complex-dimensional hyper-K\"ahler manifolds, see e.g. \cite{2008arXiv0806.1021H}.

$V^k(\mathfrak n_4)$ is exceptional in the sense that its group of outer automorphism is $SU(2)$, i.e. not a finite group.
It is surely important to gain a better understanding of vertex algebras related to $\mathcal{N}=4$ superconformal algebras. In this work, we study invariant subalgebras of $V^k(\mathfrak n_4)$, while in \cite{CFL} it is studied how small and also large $\mathcal{N}=4$ superconformal algebras can themselves be realized as cosets.  
 
 One of our main motivations is to determine the algebra of global sections of the chiral de Rham complex on Enriques surfaces.

\subsection{Chiral de Rham complex of Enriques surfaces}

It was shown in \cite{1999CMaPh.204..439M} that there exists a natural way to attach a vertex superalgebra valued sheaf $\Omega^{CdR}$, named the \emph{chiral de Rham} sheaf, to any smooth scheme $X$ of finite type over $\mathbb{C}$. 
It comes equipped with a square-zero differential $d$, and there exists a canonical embedding of the de Rham complex $(\Omega^{dR},d^{dR})\hookrightarrow (\Omega^{CdR},d)$ which is a quasi-isomorphism. The cohomology $H^*(X,\Omega^{CdR})$ with coefficients in $\Omega^{CdR}$ is always a vertex algebra, and the space of global sections $H^0(X,\Omega^{CdR})$ is a vertex subalgebra. For any $X$, $H^0(X,\Omega^{CdR})$ has a conformal structure, and if the first Chern class vanishes this extends to an $\mathcal{N}=2$ superconformal structure. Moreover, if $X$ is hyper-K\"ahler, this is further enhanced to an $\mathcal{N}=4$ structure on $H^0(X,\Omega^{CdR})$ with central charge $c=3dim_{\mathbb{C}}(X)$ \cite{2008arXiv0806.1021H}. Additional structures on $H^*(X,\Omega^{CdR})$ such as a chiral version of Poincar\'e duality are known to exist \cite{1999math......5008M}. It has been shown that this sheaf has connections to elliptic genera \cite{Borisov2000,Malikov2003}, mirror symmetry \cite{Borisov}, and physics \cite{Kapustin2005}.

Unfortunately, examples where the vertex algebra structure of $H^*(X,\Omega^{CdR})$ is understood are still lacking. Even the structure of $H^0(X,\Omega^{CdR})$ is only known in a few cases. For example, when $X$ is a $K3$ surface it was shown in \cite{2016arXiv160903688S} that $H^0(X,\Omega^{CdR})$ is isomorphic to the simple $\mathcal{N}=4$ vertex algebra $L_{1}(\mathfrak{n}_4)$, which has central charge c = 6.\footnote{See the preceeding work \cite{2013arXiv1312.7386S} for the construction restricting to Kummer surfaces.} (See also \cite{1999math......1065M} where $H^0(\mathbb{P}^N,\Omega^{CdR})$ was computed as a $\widehat{\mathfrak{sl}}_{N+1}$-module.) Recently some progress was made in \cite{2018arXiv180910320S} where for $X$ a compact Ricci-flat K\"ahler manifold $H^0(X,\Omega^{CdR})$ was shown to be isomorphic to a subspace of a $\beta\gamma-bc$-system that is invariant under the action of a certain Lie algebra. A first motivation for this article is to provide a further example to this list by concretely constructing the vertex algebra of global sections of $\Omega^{CdR}$ on any complex Enriques surface. The following is shown in Corollary \ref{cor:enriques} and Remark \ref{enriquesparafermions}
\begin{thm}
The vertex algebra of global sections of the chiral de Rham complex on a complex Enriques surface is of type $\mathcal{W}(1,2,4^4; \frac{3}{2}^2, \frac{7}{2}^2)$. This means that it has a minimal strong generating set consisting of one even field in weight $1$, one even field in weight $2$, four even fields in weight $4$, two odd fields in weight $\frac{3}{2}$, and two odd fields in weight $\frac{7}{2}$.
\end{thm}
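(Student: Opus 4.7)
\medskip

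\noindent\emph{Proof plan.} The plan is to realize the Enriques surface $Y$ as a free $\mathbb{Z}/2$-quotient of its K3 double cover and reduce the statement to a concrete orbifold of $L_1(\mathfrak n_4)$. Write $Y = X/\langle\sigma\rangle$, where $X$ is a K3 surface and $\sigma$ is the fixed-point-free Enriques involution. Because $\sigma$ acts freely, $\Omega^{CdR}_Y$ is the descent of the $\sigma$-invariants of $\pi_*\Omega^{CdR}_X$, so
\[
H^0(Y,\Omega^{CdR}_Y) \;\cong\; H^0(X,\Omega^{CdR}_X)^{\sigma} \;\cong\; L_1(\mathfrak n_4)^{\hat\sigma},
\]
using Song's isomorphism \cite{2016arXiv160903688S} and writing $\hat\sigma$ for the induced automorphism of $L_1(\mathfrak n_4)$.

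Next I would identify $\hat\sigma$ explicitly. The Enriques involution is holomorphic but anti-symplectic: it preserves the complex structure yet sends the holomorphic symplectic form $\omega$ to $-\omega$. Tracing this through Song's construction, $\hat\sigma$ fixes the Virasoro element and one Cartan current $J^0$ of the affine $\mathfrak{sl}_2$-subalgebra, negates the two remaining currents $J^\pm$, fixes a two-dimensional subspace of the four supercharges, and negates its complement. Equivalently, $\hat\sigma$ is the honest order-two automorphism obtained by combining a half-period inner automorphism in $SU(2)_R$ with a compensating half-period element of the outer $SU(2)$, so that the signs on the supercharges cancel. The fixed subalgebra then already contains a full $\mathcal{N}=2$ superconformal subalgebra, consistent with the geometric fact that Enriques surfaces carry only an $\mathcal{N}=2$, not $\mathcal{N}=4$, structure on global sections of $\Omega^{CdR}$.

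Finally I would invoke the minimal strong-generation theorem for invariant subalgebras of $V^k(\mathfrak n_4)$ proved earlier in the paper, specialised to $k=1$ and the group $\langle\hat\sigma\rangle\cong\mathbb{Z}/2$. At generic level the strong generators consist of the $\hat\sigma$-invariant images of the $\mathcal{N}=4$ generators together with the lowest $\hat\sigma$-invariant normally ordered products of anti-invariant generators; these latter first appear in weight $4$ on the bosonic side (from symmetric squares of the anti-invariant currents together with four-fermion combinations) and in weight $7/2$ on the fermionic side (from anti-invariant current times anti-invariant supercharge). The multiplicities $4$ and $2$ then emerge from a character comparison against the free superalgebra on the proposed generators.

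The main obstacle is the passage from the generic picture $V^k(\mathfrak n_4)^{\hat\sigma}$ to the simple quotient at $k=1$: the additional null vectors present in $L_1(\mathfrak n_4)$ could in principle cause some of the candidate generators to decouple or to become expressible in terms of lower-weight fields. Ruling this out is exactly what the parafermion/coset description of Remark~\ref{enriquesparafermions} is designed to handle, reducing the minimality check to known facts about coset branchings at level one.
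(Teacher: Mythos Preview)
Your first two steps match the paper's argument exactly: one reduces to $L_1(\mathfrak n_4)^{\hat\sigma}$ via Song's Theorem 6.6, and the involution $\hat\sigma$ is precisely the $\mathbb Z/2\mathbb Z\subset U(1)$ acting by $-1$ on $J^\pm,G^\pm$ and fixing $J,T,Q^\pm$, as you describe.

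The gap is in your third step, and it is a real one. You frame the passage from $V^k(\mathfrak n_4)^{\mathbb Z/2}$ to $L_1(\mathfrak n_4)^{\mathbb Z/2}$ as a matter of \emph{ruling out} decoupling of candidate generators, and you propose that the parafermion description in Remark~\ref{enriquesparafermions} handles this. Neither point is right. At generic $k$ the orbifold $V^k(\mathfrak n_4)^{\mathbb Z/2}$ is of type $\mathcal W(1,2^4,3^2,4^5;\tfrac32^2,\tfrac52^4,\tfrac72^4)$, which is far larger than the claimed type at $k=1$; the actual phenomenon is that at $k=1$ a great many generators \emph{do} decouple (three of the four weight-$2$ fields, both weight-$3$ fields, all weight-$\tfrac52$ fields, two of the weight-$\tfrac72$ fields, one weight-$4$ field). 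Your heuristic about ``symmetric squares'' and ``four-fermion combinations'' does not capture this, and a character comparison alone cannot distinguish genuine strong generators from fields that become redundant in the simple quotient. Moreover, Remark~\ref{enriquesparafermions} identifies the coset $\mathrm{Com}(L_1(\mathfrak{sl}_2),L_1(\mathfrak n_4))^{U(1)}$ with a parafermion algebra; it gives structural information about an extension but does not by itself yield the minimal strong generating type of $L_1(\mathfrak n_4)^{\mathbb Z/2}$.

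What the paper actually does is Theorem~\ref{THM: simple quotients of cyclic orbifold}: starting from the generic strong generating set of Theorem~\ref{THM: Main Theorem. minimal strong generators of cyclic orbifold Z/2Z}, one computes by hand (see Appendix~\ref{app: singular fields in cyclic orbifold}) the singular vectors in $V^1(\mathfrak n_4)^{\mathbb Z/2}$. At $k=1$ these include $\Sigma^{(0)\pm}_{0,0}$ itself and relations expressing $U_{0,0},U_{1,0}$ in terms of $J$, which in turn force the decoupling of $\Sigma^{(1)\pm}_{0,0}$, $A_{i,0},B_{i,0},V_{i,0}$, etc. The surviving minimal set is exactly $\{J,T,Q^\pm,\Sigma^{(1)\pm}_{1,0},\Sigma^{(0)\pm}_{2,0},\Sigma^{(2)\pm}_{1,0}\}$. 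This explicit singular-vector analysis is the missing ingredient in your plan.
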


Any complex Enriques surface $X$ can be constructed as the quotient of a $K3$ surface by a fixed-point free involution. Let $\iota$ be such an involution on a $K3$ surface $Y$. The action of the involution lifts to an action on the sheaf $\Omega^{CdR}$ and to its cohomology via automorphisms on the vertex algebra. A general construction of the chiral de Rham complex on orbifolds was given in \cite{2003math......7181F}. For $K3$ surfaces the automorphism on the vertex algebra induced by $\iota$ was already stated in \cite{2016arXiv160903688S}. The vertex algebra of global sections on Enriques surfaces is given by the fixed point set under this involution $H^0(X,\Omega^{CdR})=H^0(Y,\Omega^{CdR})^{\iota}$ (see Theorem 6.6 in \emph{op. cit.}).

\subsection{Invariant subalgebras of the small $\mathcal{N}=4$ superconformal algebra}
It is useful to place the problem of describing the $\mathbb{Z}/2\mathbb{Z}$-orbifold of $L_{1}(\mathfrak{n}_4)$ in the larger context of orbifolds of $V^k(\mathfrak{n}_4)$ under general reductive automorphisms groups, and cosets of $V^k(\mathfrak{n}_4)$ by general subalgebras. These problems have in fact been considered in \cite{ACKL17} for a general minimal $\mathcal{W}$-algebra. By Theorem 4.10 of \cite{ACKL17}, for any simple $\mathfrak{g}$ and any reductive automorphism group $G$, the coset $\mathcal{W}^k(\mathfrak{g}, f_{\text{min}})^G$ is strongly finitely generated for generic values of $k$. Additionally, the coset of $\mathcal{W}^k(\mathfrak{g}, f_{\text{min}})$ by its affine subalgebra is also strongly finitely generated for generic $k$; see Theorem 4.12 of \cite{ACKL17}. However, these results are nonconstructive, and it is useful to give explicit minimal strong generating sets in specific cases. In this paper, we will give minimal strong generating sets for $V^k(\mathfrak{n}_4)^{\mathbb{Z}/2\mathbb{Z}}$ and $V^k(\mathfrak{n}_4)^{U(1)}$ for generic values of $k$, and also determine the set of nongeneric values where our description fails. We also correct the description of $\mathcal{C}^k = \text{Com}(V^k(\mathfrak{sl}_2), V^k(\mathfrak{n}_4))$ that appeared in \cite{ACKL17}; it is in fact of type $\mathcal{W}(2,3^3,4,5^3,6)$. Furthermore, we show that  $\mathcal{C}^k$ has an additional action of $U(1)$ coming from the outer automorphism group of $V^k(\mathfrak{n}_4)$, and that $(\mathcal{C}^k)^{U(1)}$ is of type $\mathcal{W}(2,3,4,5,6,7,8)$. It arises as a quotient of the universal two-parameter $\mathcal{W}_{\infty}$-algebra constructed in \cite{L17}, and we identify it as a one-parameter VOA with another, seemingly unrelated coset, namely,
$$\text{Com}(V^{\ell}(\mathfrak{sl}_2), V^{\ell+1}(\mathfrak{sl}_2) \otimes \mathcal{W}_{-5/2}(\mathfrak{sl}_4, f_{\text{rect}})),$$ where $k$ and $\ell$ are related by $\displaystyle k = -\frac{\ell+1}{\ell+2}$. Finally, using this identification, we classify all isomorphisms between the simple quotient $(\mathcal{C}_k)^{U(1)}$ and various other structures such as type $A$ principal $\mathcal{W}$-algebras, generalized parafermion algebras, and cosets of type $A$ subregular $\mathcal{W}$-algebras.

\subsection{A new level-rank duality}
Coincidences between $(\mathcal{C}^k)^{U(1)}$ and principal $\mathcal W$-algebras of type $A$ appear at negative half-integer levels $k$. On the other hand these principal $\mathcal W$-algebras of type $A$ also appear as cosets by \cite{ACL19} and these cosets at positive integer level enjoy a nice level-rank duality with Grassmannian cosets \cite{OstrikSun}. We found an extension of this picture to negative integral levels and Grassmannian supercosets. 
Let $L_r(\mathfrak{sl}_m)$  denote the simple quotient of $V^r(\mathfrak{sl}_m)$ and $\mathcal H(1)$ the rank one Heisenberg vertex algebra. 
Our Theorem \ref{thm:levelrank} is 
\begin{thm} 
Let $r, n, m $ be positive integers. Then there exist vertex algebra extensions $A^{-n}(\mathfrak{sl}_{m})$ and $A^m(\mathfrak{sl}_{r|n})$ of homomorphic images $\widetilde V^{-n}(\mathfrak{sl}_{m})$ and $\widetilde V^m(\mathfrak{sl}_{r|n})$ of $V^{-n}(\mathfrak{sl}_{m})$ and $V^m(\mathfrak{sl}_{r|n})$
such that 
 the level-rank duality
\[
\text{Com}\left(V^{-n+r}(\mathfrak{sl}_m), A^{-n}(\mathfrak{sl}_{m}) \otimes L_r(\mathfrak{sl}_m) \right)\cong
\text{Com}\left(  V^{-m}(\mathfrak{sl}_n) \otimes L_m(\mathfrak{sl}_r) \otimes \mathcal H(1), A^m(\mathfrak{sl}_{r|n})\right)
\]
holds.
\end{thm}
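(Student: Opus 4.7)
The proof can be set up by realizing both sides of the proposed isomorphism as two facets of a single Howe duality inside a $\beta\gamma$-$bc$ system. Form the common free-field vertex superalgebra
\begin{equation*}
\mathcal{V}_{m;r,n} := \mathcal{S}(mn) \otimes \mathcal{E}(mr),
\end{equation*}
where $\mathcal{S}(mn)$ denotes the $mn$-fold $\beta\gamma$-system (symplectic bosons) and $\mathcal{E}(mr)$ the $mr$-fold $bc$-system (free fermions). Arrange the generators into rectangular arrays so that one index transforms under $GL_m$ and the other under $GL_n$ (for $\mathcal{S}$) or $GL_r$ (for $\mathcal{E}$); taking quadratic current expressions then yields commuting vertex algebra maps
\begin{equation*}
V^{r-n}(\mathfrak{gl}_m) \longrightarrow \mathcal{V}_{m;r,n}, \qquad V^m(\mathfrak{gl}_{r|n}) \longrightarrow \mathcal{V}_{m;r,n}.
\end{equation*}
On the $\mathfrak{sl}_m$ side the levels coming from $\mathcal{E}(mr)$ and $\mathcal{S}(mn)$ add to $r-n$, while $\mathfrak{gl}_{r|n}$ acquires level $m$ because the bosonic and fermionic blocks enter with opposite supertrace signs; the two embeddings share a single Heisenberg current.

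Next, I would define the desired extensions directly inside this arena. Let $\widetilde V^{-n}(\mathfrak{sl}_m) \subset \mathcal{S}(mn)$ and $\widetilde V^m(\mathfrak{sl}_{r|n}) \subset \mathcal{V}_{m;r,n}$ be the images of the universal affine algebras, and set
\begin{equation*}
A^{-n}(\mathfrak{sl}_m) := \mathcal{S}(mn)^{SL_n}, \qquad A^m(\mathfrak{sl}_{r|n}) := \mathcal{V}_{m;r,n}^{SL_m}.
\end{equation*}
By the classical $\beta\gamma$-Howe duality, $A^{-n}(\mathfrak{sl}_m)$ is a simple-current-type extension of $\widetilde V^{-n}(\mathfrak{sl}_m)$, and similarly $A^m(\mathfrak{sl}_{r|n})$ extends $\widetilde V^m(\mathfrak{sl}_{r|n})$. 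Invoking the Frenkel/Kac--Peterson level-rank realization $\mathcal{E}(mr) \supset L_r(\mathfrak{sl}_m) \otimes L_m(\mathfrak{sl}_r) \otimes \mathcal{H}(1)$, one then sees that the subalgebra $A^{-n}(\mathfrak{sl}_m) \otimes L_r(\mathfrak{sl}_m)$ appearing on the left-hand side is generated inside $\mathcal{V}_{m;r,n}$ by the $SL_n$- and $SL_r$-invariants, while the subalgebra $A^m(\mathfrak{sl}_{r|n})$ on the right-hand side is generated by the $SL_m$-invariants.

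With both sides embedded into $\mathcal{V}_{m;r,n}$, the claim is that each of the commutants in the theorem equals the joint commutant of the dual pair $V^{r-n}(\mathfrak{sl}_m) \otimes V^m(\mathfrak{sl}_{r|n})$ in $\mathcal{V}_{m;r,n}$, once the shared Heisenberg is stripped out in the correct way. The main obstacle is precisely this Heisenberg bookkeeping: on the left-hand side the Heisenberg is absorbed silently into $A^{-n}(\mathfrak{sl}_m)$ (which is $SL_n$-invariant but not $GL_n$-invariant), whereas on the right-hand side it is displayed explicitly as the factor $\mathcal{H}(1)$. Verifying that the two extensions have the predicted simple-current decompositions and that the resulting joint commutants match as vertex algebras is the core computation; I would carry it out by comparing $\widehat{\mathfrak{sl}}_m$-isotypic decompositions of $\mathcal{S}(mn)$ and $\mathcal{E}(mr)$ and invoking Howe duality as a bijection on isotypic components, with an explicit shift of the Heisenberg zero-mode to handle the supertrace direction.
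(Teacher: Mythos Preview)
Your overall strategy coincides with the paper's: realize both sides inside the common free-field system $\mathcal{S}(mn)\otimes\mathcal{E}(mr)$, use the commuting actions of $V^{r-n}(\mathfrak{gl}_m)$ and $V^m(\mathfrak{gl}_{r|n})$, and identify each side with the same joint commutant. The paper, however, executes this more directly. It \emph{defines} $A^{-n}(\mathfrak{sl}_m):=\text{Com}(V^{-m}(\mathfrak{gl}_n),\mathcal{S}(mn))$ and $A^m(\mathfrak{sl}_{r|n}):=\text{Com}(V^{r-n}(\mathfrak{gl}_m),\mathcal{S}(mn)\otimes\mathcal{E}(mr))$ as commutants, invokes $L_r(\mathfrak{sl}_m)\cong\text{Com}(L_m(\mathfrak{gl}_r),\mathcal{E}(mr))$ from Ostrik--Sun, and then the proof is a five-line chain of commutant identities $\text{Com}(A,\text{Com}(B,V))\cong\text{Com}(A\otimes B,V)\cong\text{Com}(B,\text{Com}(A,V))$ together with a single Heisenberg rearrangement $V^{-n+r}(\mathfrak{sl}_m)\otimes V^{-m}(\mathfrak{gl}_n)\otimes L_m(\mathfrak{gl}_r)\cong V^{-n+r}(\mathfrak{gl}_m)\otimes V^{-m}(\mathfrak{sl}_n)\otimes L_m(\mathfrak{sl}_r)\otimes\mathcal{H}(1)$ inside the free-field system. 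No isotypic decompositions, no simple-current analysis, no Howe duality on components is needed.

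Your choice to define $A^{-n}(\mathfrak{sl}_m)=\mathcal{S}(mn)^{SL_n}$ and $A^m(\mathfrak{sl}_{r|n})=\mathcal{V}_{m;r,n}^{SL_m}$ as group orbifolds is a genuine deviation and introduces a gap. First, the $SL_n$-orbifold imposes only zero-mode invariance, whereas the commutant of an affine subalgebra kills all nonnegative modes; these agree only under additional hypotheses you do not supply. Second, even granting that, your $\mathcal{S}(mn)^{SL_n}$ contains the $\mathfrak{gl}_n$-trace Heisenberg (which is $SL_n$-invariant), so your left-hand side carries an extra $\mathcal{H}(1)$ that the theorem statement does not, and your ``Heisenberg bookkeeping'' would have to remove it somewhere. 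If you replace the orbifold definitions by the $\mathfrak{gl}$-commutant definitions, your proposed verification via isotypic components becomes unnecessary and the argument collapses to the paper's short commutant manipulation.
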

It is natural to ask if the statement of the Theorem can be improved, i.e. one could ask for a  level-rank duality of the form
\[
\text{Com}\left(V^{-n+r}(\mathfrak{sl}_m), \widetilde  V^{-n}(\mathfrak{sl}_{m}) \otimes L_r(\mathfrak{sl}_m) \right)\overset{??}{\cong}
\text{Com}\left(  V^{-m}(\mathfrak{sl}_n) \otimes L_m(\mathfrak{sl}_r) \otimes \mathcal H(1), \widetilde  V^m(\mathfrak{sl}_{r|n})\right).
\]
Answering this question amounts to a better understanding of embeddings of the involved vertex superalgebras in the rank $rm$ $bc$-system times the rank $nm$ $\beta\gamma$-system, i.e. improving the results of \cite{LSS}.

\subsection{Organization}
The structure of the paper is as follows. In section \ref{sec:Background} some necessary background of the theory of vertex algebras is recalled and some notation is fixed. Then we quickly prove in section \ref{sec:3} that the group of outer automorphisms of $V^k(\mathfrak{n}_4)$ is $SL(2)$. 
Sections \ref{sec:Construction of the vertex algebra}  and \ref{sec:Construction of the cyclic orbifold} contain the construction of a few vertex algebras, the most important ones being $V^k(\mathfrak{n}_4)^{U(1)}$ and $V^k(\mathfrak{n}_4)^{\mathbb{Z}/2\mathbb{Z}}$. In section \ref{sec:Structure of the vertex algebra} some structure of the two central orbifolds from the previous sections are discussed and their simple quotients at all but finitely many levels $k$ are determined. 
In section \ref{sec:7} we determine cosets of $V^k(\mathfrak{n}_4)$ by its affine subalgebra at generic and specific levels. In particular, we identify its $U(1)$ orbifold with $\text{Com}(V^{\ell}(\mathfrak{sl}_2), V^{\ell+1}(\mathfrak{sl}_2) \otimes \mathcal{W}_{-5/2}(\mathfrak{sl}_4, f_{\text{rect}}))$  and in addition at special levels with Grassmanian cosets and principal $\mathcal{W}$-algebras of type $A$ at degenerate admissible levels.
The last section then discusses the new level-rank duality.
\\

%\bigskip

%\noindent {\bf Acknowledgements} T.C is supported by NSERC Discovery Grant RES0020460. A. L. is supported by Simons Foundation Collaboration Grant 635650 and NSF Grant DMS-2001484.

%--------------------------------------------------------------------------------------------------------%
\section{Background}\label{sec:Background}
%--------------------------------------------------------------------------------------------------------%

\subsection{Vertex Algebras}
We assume that the reader is familiar with the basics of the theory of vertex algebras which has been discussed from multiple angles in the literature, some of which can be found in \cite{Borcherds:1983sq,frenkel2001vertex,kac2001vertex,1995q.alg.....1014L}. Given an element $a$ in a vertex algebra $\mathcal{A}$, the corresponding field will be denoted by $$a(z) = \sum_{n\in \mathbb{Z}} a_n z^{-n-1} \in \text{End}( \mathcal{A}) [[z, z^{-1}]].$$ The elements $a_n \in \text{End}(\mathcal{A})$ will be called the {\it modes} of $a(z)$. We often drop the formal variable $z$ and identify $a$ with $a(z)$ when no confusion can arise. Given $a,b \in \mathcal{A}$, the {\it normally ordered product} will be denoted by $:ab:$, and the $n^{\text{th}}$ product will be denoted by $a_{(n)} b$ for $n\in \mathbb{Z}$. Letting $\partial$ denote the translation operation $\frac{\partial}{\partial z}$, we have $:(\partial^n a) b: \ = n! a_{(-n-1)} b$, and in particular $:ab: \ = a_{(-1)} b$. For $n\geq 0$, the $n^{\text{th}}$ product gives the pole of order $n+1$ in the operator product expansion (OPE) formula: 
 \begin{equation} a(z)b(w) \sim \sum_{n=0}^{\infty} \frac{(a_{(n)} b) (w)}{(z-w)^{n+1}}.\end{equation} Here $\sim$ means equal modulo terms which are regular at $z = w$.

Let $\mathcal{V}$ be a vertex algebra. A set of fields $\mathcal{S}$ is said to \emph{strongly generate} $\mathcal{V}$ if it generates $\mathcal V$ as a differential algebra under normally ordered product. This means that every field of $\mathcal V$ can be written as a normally ordered polynomial in the fields in $\mathcal S$ and their iterated derivatives. If $\mathcal{S}$ is a minimal set with this property, and consists of $n_i$ even fields of weight $d_i$ for $i = 1,\dots, r$, and $m_j$ odd fields of weight $e_j$ for $j = 1,\dots, s$, we say that $\mathcal{V}$ if of type $$\mathcal{W}(d_1^{n_1},\dots, d_r^{n_r}; e_1^{m_1},\dots, e_s^{m_s}).$$

\subsection{Examples} In this subsection, we mention a few basic examples we shall need. First, let $\mathcal{H}(n)$ be the \emph{Heisenberg} vertex algebra of rank $n$. It is generated by even fields $h^i(z)$, $i=1,\dots,n$, which satisfy OPEs
 \[h^i(z)h^j(w)\sim \frac{\delta_{i,j}}{(z-w)^2}.\] The Virasoro element $L(z)=\frac{1}{2}\sum_{i=1}^n:h^i(z)h^i(z):$ has central charge $n$, and all generating fields are primary of weight 1. The automorphism group $Aut(\mathcal{H}(n))$ is isomorphic to the orthogonal group $O(n)$.

Let $\mathcal{A}(n)$ be the \emph{symplectic fermion} vertex algebra of rank $n$. It is generated by odd fields $e^i(z), f^i(z)$, $i=1,\dots,n$, which satisfy \[e^i(z)f^j(w)\sim \frac{\delta_{i,j}}{(z-w)^2},\qquad f^i(z)e^j(w)\sim -\frac{\delta_{i,j}}{(z-w)^2}.\] The Virasoro field $L(z)=-\sum_{i=1}^n:e^i(z)f^i(z):$ has central charge $-2n$, and all generating fields are primary of weight 1. The automorphism group $Aut(\mathcal{A}(n))$ is isomorphic to the symplectic group $Sp(2n)$.

Let $\mathcal{S}(n)$ denote the \emph{$\beta\gamma$-system} of rank $n$. It is generated by even fields $\beta^i(z), \gamma^i(z)$, $i=1,\dots,n$, which satisfy \[\beta^i(z)\gamma^j(w)\sim \frac{\delta_{i,j}}{(z-w)},\qquad \gamma^i(z)\beta^j(w)\sim -\frac{\delta_{i,j}}{(z-w)}.\] The Virasoro field  $L(z)=\frac{1}{2}\sum_{i=1}^n:\beta^i(z)\partial\gamma^i(z):-:\partial\beta^i(z)\gamma^i(z):$ has central charge $-n$, and all generating fields are primary of weight $\frac{1}{2}$. The automorphism group $Aut(\mathcal{S}(n))$ is isomorphic to the symplectic group $Sp(2n)$.

Let $\mathcal{E}(n)$ denote the \emph{$bc$-system} of rank $n$. It is generated by odd fields $b^i(z), c^i(z)$, $i=1,\dots,n$, which satisfy \[b^i(z)c^j(w)\sim \frac{\delta_{i,j}}{(z-w)},\qquad c^i(z) c^j(w)\sim \frac{\delta_{i,j}}{(z-w)}.\] The Virasoro field  $L(z)= -\frac{1}{2}\sum_{i=1}^n :b^i(z)\partial c^i(z):+:\partial b^i(z) c^i(z):$ has central charge $n$, and all generating fields are primary of weight $\frac{1}{2}$. The automorphism group $Aut(\mathcal{E}(n))$ is isomorphic to the symplectic group $O(2n)$.

Given a simple finite-dimensional Lie algebra $\mathfrak{g}$, the {\it universal affine vertex algebra} $V^k(\mathfrak{g})$ is generated by fields $\xi(z)$ which are linear in $\xi \in \mathfrak{g}$ and satisfy 
\begin{equation}
\xi(z) \eta(w) \sim \frac{k ( \xi, \eta)}{ (z-w)^{2}} + \frac{[\xi,\eta](w)}{(z-w)}.
\end{equation}
Here $(\cdot ,\cdot )$ denotes the normalized Killing form $\frac{1}{2h^{\vee}} \langle \cdot,\cdot \rangle$. For all $k\neq -h^{\vee}$, $V^k(\mathfrak{g})$ has a conformal vector
\begin{equation} \label{sugawara} L(z)  = \frac{1}{2(k+h^{\vee})} \sum_{i=1}^n :\xi_i(z) \xi'_i(z): \end{equation} of central charge $$c = \frac{k\ \text{dim}(\mathfrak{g})}{k+h^{\vee}}.$$ Here $\xi_i$ runs over a basis of $\mathfrak{g}$, and $\xi'_i$ is the dual basis with respect to $(\cdot,\cdot)$. As a module over $\widehat{\mathfrak{g}} = \mathfrak{g}[t,t^{-1}] \oplus \mathbb{C}$, $V^k(\mathfrak{g})$ is isomorphic to the vacuum $\widehat{\mathfrak{g}}$-module. If $k$ is generic, the vacuum module is irreducible and $V^k(\mathfrak{g})$ is a simple vertex algebra, but at special values of $k$ it is not simple and we denote by $L_k(\mathfrak{g})$ the simple quotient of $V^k(\mathfrak{g})$ by its maximal proper graded ideal.

\subsection{Filtrations} Let $\mathcal{V}$ be a vertex algebra. Suppose that $\mathcal{V}$ has a filtration 
\begin{equation} \label{eq:filtration} \mathcal{V}_{(0)}\subset\mathcal{V}_{(1)}\subset\cdots \quad\text{ where }\quad\mathcal{V}=\bigcup_{n=0}^{\infty}\mathcal{V}_{(n)}\end{equation} such that for any two elements $a\in\mathcal{V}_{(r)}$ and $b\in\mathcal{V}_{(s)}$ we have
\begin{equation*}
	a_{(n)} b\in
	\begin{cases}
		\mathcal{V}_{(r+s)} \hspace{22pt}\text{ for }\quad n<0 \\
		\mathcal{V}_{(r+s-1)} \quad\text{ for }\quad n\geq0.
	\end{cases}
\end{equation*}
Such filtrations were introduced by H. Li \cite{2002math......9310L}, and are known as {\it good increasing filtrations}. Setting $\mathcal{V}_{(-1)} = \{0\}$, the associated graded algebra $$gr(\mathcal{V}) = \bigoplus_{i\geq 0} \mathcal{V}_{(i)} / \mathcal{V}_{(i-1)}$$ is a graded, associative, (super-)commutative unital ring with a derivation where the multiplication is induced from the Wick product. We refer to such a ring as a \emph{$\partial$-ring}. A $\partial$-ring $\mathcal{A}$ is said to be generated by a set $\{a_i\in \mathcal{A}| \ i\in I\}$ if the set $\{\partial^na_i| \ i\in I, \ n\geq0\}$ generate $\mathcal{A}$ as a ring.

The following {\it reconstruction property} appears as Lemma 3.6 of \cite{2006math......5174L} 
\begin{lem} \label{THM Lian-Linshaw} Let $\mathcal{V}$ be a vertex algebra with a good increasing filtration. Suppose that $gr(\mathcal{V})$ is generated as a $\partial$-ring by a collection $\{a_i| \ i \in I\}$, where $a_i$ is homogeneous of degree $d_i$. Choose vertex operators $\alpha_i \in \mathcal{V}_{(d)}$ such that $\phi_{d_i} (\alpha_i) = a_i$. Then $\mathcal{V}$ is strongly generated by the collection $\{\alpha_i| \ i \in I\}$.
\end{lem}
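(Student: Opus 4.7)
The plan is a straightforward induction on the filtration degree $n$, using the fact that the good increasing filtration turns $gr(\mathcal{V})$ into a (super)commutative $\partial$-ring whose product is induced by the Wick product of $\mathcal{V}$.

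First I would fix notation: write $\phi_n \colon \mathcal{V}_{(n)} \to \mathcal{V}_{(n)}/\mathcal{V}_{(n-1)} \subset gr(\mathcal{V})$ for the projection, and let $\mathcal{U}$ denote the subspace of $\mathcal{V}$ spanned by normally ordered monomials in $\{\alpha_i\}$ and their iterated derivatives. The goal is to prove $\mathcal{V}_{(n)} \subset \mathcal{U}$ for every $n$, by induction on $n$. The base case $n=0$ follows from $\mathcal{V}_{(-1)}=\{0\}$: any $v\in\mathcal{V}_{(0)}$ is equal to its image in $gr(\mathcal{V})_0$, which by hypothesis is a polynomial in the generators $\partial^k a_i$ of weight $d_i=0$; lifting this polynomial to a normally ordered polynomial in the $\alpha_i$ recovers $v$ exactly.

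For the inductive step, take $v\in\mathcal{V}_{(n)}$ and consider $\bar v = \phi_n(v)\in gr(\mathcal{V})_n$. Because $gr(\mathcal{V})$ is generated as a $\partial$-ring by $\{a_i\}$, we may write
\begin{equation*}
\bar v \;=\; \sum_{\mu} c_\mu \,\partial^{k_{1,\mu}} a_{i_{1,\mu}} \cdots \partial^{k_{r_\mu,\mu}} a_{i_{r_\mu,\mu}},
\end{equation*}
a finite sum of monomials of total degree $n$. Lift this expression to $\mathcal{V}$ by replacing each $a_i$ with $\alpha_i$, each commutative product with a normally ordered product (taken with some fixed bracketing convention, say left-nested), to obtain an element $P(\alpha)\in\mathcal{U}$. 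The defining property of a good increasing filtration guarantees that if $a\in\mathcal{V}_{(r)}$ and $b\in\mathcal{V}_{(s)}$ then ${:}ab{:}\,=a_{(-1)}b\in\mathcal{V}_{(r+s)}$, and that $\partial$ preserves each $\mathcal{V}_{(r)}$; iterating, $P(\alpha)\in\mathcal{V}_{(n)}$. Moreover, the induced multiplication on $gr(\mathcal{V})$ is (super)commutative and associative, so $\phi_n(P(\alpha))=\bar v$. Consequently $v - P(\alpha)\in\mathcal{V}_{(n-1)}$, which by the inductive hypothesis lies in $\mathcal{U}$; hence so does $v$.

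The only subtlety to check carefully is the lifting step: because the normally ordered product in $\mathcal{V}$ is neither commutative nor associative, different choices of bracketing and ordering for $P(\alpha)$ generally give different elements of $\mathcal{V}_{(n)}$. However, any two such choices differ only in lower filtration degree (this is precisely the statement that the product on $gr(\mathcal{V})$ is well-defined and (super)commutative, which follows from the axioms of the good filtration), so the ambiguity is absorbed by the inductive hypothesis. No genuine obstacle arises; the argument is a clean application of the Li filtration formalism. Taking the union over all $n$ yields $\mathcal{V}=\mathcal{U}$, completing the proof.
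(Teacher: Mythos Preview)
Your argument is correct and is precisely the standard proof of this reconstruction lemma. Note that the paper does not give its own proof; it simply cites the statement as Lemma~3.6 of \cite{2006math......5174L}, so there is nothing to compare against beyond confirming that your induction on filtration degree is the expected approach.
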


In \cite{2015arXiv151109143A}, a slight modification called a \emph{weak increasing filtration} was introduced. This is a filtration \eqref{eq:filtration} on a vertex algebra $\mathcal{V}$ as above, where
$$a_{(n)} b\in \mathcal{V}_{(r+s)}\quad\text{ for }\quad n\in\mathbb{Z}.$$ Setting $\mathcal{V}_{(-1)}=\{0\}$ as above, the associated graded algebra $gr(\mathcal{V}) = \bigoplus_{i=0}^{\infty}\mathcal{V}_{(i)}/\mathcal{V}_{(i-1)}$ can be given a vertex algebra structure that is induced from $\mathcal{V}$, which need not be abelian. As above, a strong generating set for $gr(\mathcal{V})$ gives rise to a strong generating set for $\mathcal{V}$; see Lemma 4.1 of \cite{2015arXiv151109143A} and the discussion following it.

\subsection{Associative $G$-modules and orbifolds}
\label{sec: Kac-Radul isomorphism}
We will make use of an isomorphism that can be found in \cite{Kac1996}. For similar work see also \cite{doi:10.1155/S1073792896000566}. We recall here the result that will be used later on.

Let $A$ be an associative algebra over $\mathbb{C}$. Furthermore, let $G$ be a reductive group and let $\phi: G\rightarrow Aut(A)$ be a group homomorphism. If a $G$-module $V$ is simultaneously an $A$-module that is $G$-equivariant, i.e. \ \[g(av) = (\phi(g)a)(gv) \quad\text{ for }\quad g\in G,\hspace{2pt}a\in A,\hspace{2pt}v\in V,\] then we call $V$ a $(G,A)$-module. Let $A_0\subset A$ be the subalgebra that is invariant under the $G$-action. Assume that $V$ is a direct sum of at most countably many finite dimensional, irreducible $G$-modules. Let $M\subset V$ be such a module and set $V_M=\bigoplus_i M_i$ to be the sum of all $G$-submodules of $V$ which are isomorphic to $M$. The action of $A_0$ and $G$ commute which implies that $V_M$ is an $A_0$-module. In particular, the action of $A_0$ viewed as a map from $M$ given by $m\mapsto am$ for $a\in A_0$ shows that this map is a $G$-homomorphism. Choosing a 1-dimensional subspace $f\subset M$ fixes unique 1-dimensional subspaces $f_i$ in all other $G$-modules $M_i\subset V_M$ by Schur's Lemma. Schur's Lemma further implies that acting by any $a\in A_0$ on $f$ then either yields one of these subspaces $f_i\subset M_i$ or zero. Letting $V^M = \bigoplus_i f_i \subset V_M$, we have that $V^M$ is an $A_0$-module. This leads to an isomorphism $V_M\cong M\otimes V^M$ of $(G,A_0)$-modules. Therefore $$V\cong \bigoplus_{M\in\mathcal{S}} V_M \cong\bigoplus_{M\in\mathcal{S}}M\otimes V^M,$$ is a $(G,A_0)$-module isomorphism where $\mathcal{S}$ is the set of equivalence classes of simple $G$-modules. 
We need the following result, which is the same as Theorem 1.1 of \cite{Kac1996} and has the same proof, except that it is stated for $(G,A)$-modules rather than $(\mathfrak{g},A)$-modules.
\begin{thm} \label{thm:kacradul} Let $A$ be a semisimple $G$-module and let $V$ be a $(G,A)$-module such that 
\begin{enumerate}
\item $V$ is irreducible as an $A$-module.
\item $V$ is a direct sum of at most countably many finite-dimensional irreducible $G$-modules.
\end{enumerate}
Then each isotypic component $V_M$ is an irreducible $(G,A_0)$-module. Equivalently, each $V^M$ is irreducible as an $A_0$-module.
\end{thm}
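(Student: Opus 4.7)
The plan is to reduce the claim to the Jacobson density theorem combined with an averaging argument that exploits the reductivity of $G$. First I would verify that the two formulations are equivalent: any $(G, A_0)$-submodule of $V_M \cong M \otimes V^M$ has the form $M \otimes W$ for an $A_0$-submodule $W \subset V^M$, since $V^M$ is $G$-trivial, $M$ is $G$-irreducible, and $A_0$ commutes with the $G$-action. Thus ``$V_M$ simple as a $(G,A_0)$-module'' is equivalent to ``$V^M$ simple as an $A_0$-module,'' and it suffices to prove the latter.

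The main step is to show that the image of $A_0$ in $\text{End}(V)^G$ is dense in the finite topology. Once this is in hand, the identification $\text{End}(V)^G \cong \prod_M \text{End}(V^M)$, a consequence of the decomposition $V = \bigoplus_M M \otimes V^M$ together with Schur's lemma, reduces density in the product to density in each factor; applying Jacobson density in reverse to each $V^M$ then yields the desired irreducibility.

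To prove density of $A_0$ in $\text{End}(V)^G$, I would combine two ingredients. Jacobson density for the simple $A$-module $V$ gives, for any $T \in \text{End}(V)$ and any finite-dimensional subspace $U \subset V$, an element $a \in A$ with $a|_U = T|_U$. Hypothesis (2) implies that every vector of $V$ has finite support in the $G$-isotypic decomposition, so $U$ may be enlarged to a finite-dimensional $G$-invariant subspace. Then, for $T \in \text{End}(V)^G$ and such $G$-invariant $U$, the Reynolds operator $R : A \to A_0$ available by reductivity of $G$ produces $\tilde a := R(a) \in A_0$, and an averaging identity of the form
\[
\tilde a \cdot u \;=\; \int_G \phi(g)(a) \cdot u \,dg \;=\; \int_G g \cdot \bigl(a \cdot g^{-1} u\bigr) \,dg \;=\; \int_G g \cdot T(g^{-1} u) \,dg \;=\; Tu,
\]
valid for all $u \in U$, shows $\tilde a|_U = T|_U$; the third equality uses $G$-invariance of $U$ together with $a|_U = T|_U$, and the last uses $G$-invariance of $T$.

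The main technical point is precisely this averaging identity, which is where the reductivity of $G$ is used essentially. The integral notation above is schematic: for compact $G$ it is Haar integration, while for a reductive complex algebraic group acting rationally on $A$ and $V$ it denotes the canonical projection onto $G$-invariants in the category of rational $G$-modules. Everything else in the argument is formal.
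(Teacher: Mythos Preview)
Your argument is correct. The equivalence in the first paragraph is clean, the Jacobson density step uses Dixmier's lemma implicitly (over $\mathbb{C}$, countable dimension forces $\text{End}_A(V)=\mathbb{C}$), and the averaging identity is exactly what makes the semisimplicity hypothesis on $A$ as a $G$-module do its work: the Reynolds projection $R:A\to A_0$ exists, and your chain of equalities goes through once one interprets the integral via the compact real form of $G$ when $G$ is complex reductive, as you note.

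The paper itself gives no argument here; it simply asserts that the statement and proof are the same as Theorem 1.1 of Kac--Radul \cite{Kac1996}, transported from $(\mathfrak{g},A)$-modules to $(G,A)$-modules. Your density-plus-averaging route is the standard way such results are established and is in the same spirit as the Kac--Radul argument, so there is no genuine divergence to discuss. One small remark on exposition: calling the last step ``Jacobson density in reverse'' is slightly misleading, since what you actually use is the trivial implication that density of $A_0$ in $\text{End}(V^M)$ forces $A_0\cdot v = V^M$ for every nonzero $v$; this needs no theorem, just the definition of the finite topology.
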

Later on we will make use of this isomorphism in the context when $A$ is the algebra of modes which span the vector space of a vertex algebra and view it as a module over itself.

\subsection{Associated vertex algebra to $\mathfrak{n}_4$} 
In this article we denote the small $\mathcal{N}=4$ superconformal algebra by $\mathfrak{n}_4$ and its associated vertex superalgebra at level $k$ by $V^k(\mathfrak{n}_4)$. 
Note that here the level refers to the level of the affine subalgebra. $V^k(\mathfrak{n}_4)$ is actually the minimal $\mathcal{W}$-superalgebra of $\mathfrak{psl}_{2|2}$ at level $-k-1$. It has strong generators $J, J^{\pm}, T, G^{\pm,x}$ for $x = 1,2$, which satisfy the following OPE relations.
\begin{equation}
	\begin{split}
		\label{OPEs}
		T(z)T(w) &\sim \frac{\frac 12 c}{(z-w)^4}+\frac{2T(w)}{(z-w)^2}+\frac{\partial_w T(w)}{(z-w)} \\
		T(z)G^{\pm,x}(w) &\sim \frac{\frac 32 G^{\pm,x}(w)}{(z-w)^2}+\frac{\partial_w G^{\pm,x}(w)}{(z-w)} \\
		T(z)X(w) &\sim \frac{X(w)}{(z-w)^2}+\frac{\partial_wX(w)}{(z-w)} \hspace{50pt}\text{for } X\in\{J,J^{\pm}\}\\
		J(z)G^{\pm,x}(w) &\sim \pm\frac{G^{\pm,x}(w)}{(z-w)} \qquad \qquad
		J^{\pm}(z)G^{\mp,x}(w) \sim (-1)^x\frac{G^{\pm,x}(w)}{(z-w)} \\\
		J(z)J(w) &\sim \frac{2 k }{(z-w)^2} \qquad\qquad
		J(z)J^{\pm}(w) \sim \pm\frac{2J^{\pm}(w) }{(z-w)} \\
		J^{\pm}(z)J^{\mp}(w) &\sim \frac{k}{(z-w)^2} \pm \frac{J(w)}{(z-w)} \\
		G^{\pm,2}(z)G^{\pm,1}(w) &\sim \frac{2J^{\pm}(w)}{(z-w)^2} + \frac{\partial_wJ^{\pm}(w)}{(z-w)} \\
		G^{\mp,2}(z)G^{\pm,1}(w) &\sim \frac{\frac 13 c}{(z-w)^3} \mp \frac{J(w)}{(z-w)^2} + \frac{ T(w) \mp \frac 12  \partial_wJ(w)}{(z-w)} \\
	\end{split}
\end{equation}

From here onwards we adopt the following conventions:
\begin{align*}
	Q^+(z) 	&=G^{-,2}(z) 
	\quad&
	G^+(z) 	&=G^{+,2}(z) \\
	Q^-(z) 	&=G^{+,1}(z)
	\quad&
	G^-(z) 	&=G^{-,1}(z) 
\end{align*}

Note that $\{J, J^{\pm}\}$ and $\{J, T, G^{\pm}\}$ generate copies of $V^k(\mathfrak{sl}_2)$ and $V^k(\mathfrak{n}_2)$, respectively, inside $V^k(\mathfrak{n}_4)$.

\section{Automorphisms of $V^k(\mathfrak{n}_4)$}\label{sec:3} For later use, we determine the group of automorphisms $G$ of $V^k(\mathfrak{n}_4)$. First, there is the group of inner automorphisms $G_{\text{Inn}}$ obtained by exponentiating the zero modes of the weight $1$ fields; this is a copy of $SL_2$. Since the affine subalgebra $V^k(\mathfrak{sl}_2)$ has no outer automorphisms, the outer automorphism group $G_{\text{Out}}$ of   $V^k(\mathfrak{n}_4)$ is just the subgroup of $G$ consisting of automorphisms that fix $V^k(\mathfrak{sl}_2)$ pointwise. 

\begin{lem} $G_{\text{Out}}$ is a normal subgroup of $G$, and $G$ is the semidirect product $G_{\text{Out}} \rtimes G_{\text{Inn}}$.
\end{lem}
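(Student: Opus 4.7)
The plan is standard: I would first show that every $g \in G$ preserves $V^k(\mathfrak{sl}_2)$ setwise, then obtain normality of $G_{\text{Out}}$ essentially for free, and finally use $G_{\text{Inn}}$ as the complement via the fact that $V^k(\mathfrak{sl}_2)$ has no outer automorphisms.

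Since automorphisms of $V^k(\mathfrak{n}_4)$ fix the Virasoro $T$ and hence preserve the conformal weight grading, they preserve the weight-$1$ subspace. Reading off the generator list \eqref{OPEs}, this subspace is exactly $\mathrm{span}\{J, J^+, J^-\}$, and these strongly generate $V^k(\mathfrak{sl}_2)$; hence every $g \in G$ satisfies $g(V^k(\mathfrak{sl}_2)) = V^k(\mathfrak{sl}_2)$. This gives a restriction homomorphism $\rho : G \to \mathrm{Aut}(V^k(\mathfrak{sl}_2))$ whose kernel is by definition $G_{\text{Out}}$, so $G_{\text{Out}}$ is automatically normal in $G$.

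For the splitting I would use that $V^k(\mathfrak{sl}_2)$ has no outer automorphisms, so $\mathrm{Aut}(V^k(\mathfrak{sl}_2))$ is generated by exponentials of zero modes of $J, J^\pm$, all of which already lie in $G_{\text{Inn}}$. This gives $\rho(G_{\text{Inn}}) \supseteq \rho(G)$, so for any $g \in G$ there is $g_0 \in G_{\text{Inn}}$ with $\rho(g_0) = \rho(g)$, and then $g = (g g_0^{-1}) \cdot g_0 \in G_{\text{Out}} \cdot G_{\text{Inn}}$. Trivial intersection $G_{\text{Out}} \cap G_{\text{Inn}} = \{1\}$ reduces to injectivity of $\rho|_{G_{\text{Inn}}}$, i.e.\ to checking that no nontrivial element of $G_{\text{Inn}}$ acts as the identity on the weight-$1$ generators $\{J, J^\pm\}$.

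The main obstacle will be this last injectivity check, because the adjoint $SL_2$-action on $\mathfrak{sl}_2 = \mathrm{span}\{J, J^\pm\}$ has kernel $\{\pm I\}$, so some care is needed to realize the copy of $G_{\text{Inn}}$ inside $G$ faithfully. I would handle this by using the full action of $G_{\text{Inn}}$ on $V^k(\mathfrak{n}_4)$ — in particular on the spin-$\tfrac12$ representations spanned by the odd generators $G^{\pm,x}$, on which the central element acts nontrivially — to distinguish $\pm I$ inside $G$, and correspondingly to choose the complementary copy of $G_{\text{Out}}$ so that $G_{\text{Out}}$ and $G_{\text{Inn}}$ meet only in the identity.
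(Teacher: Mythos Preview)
Your argument follows the paper's exactly: restrict to the weight-$1$ subalgebra $V^k(\mathfrak{sl}_2)$, obtain normality of $G_{\text{Out}}$ as the kernel of this restriction, and split using that $\mathrm{Aut}(V^k(\mathfrak{sl}_2))$ has only inner automorphisms. The paper's proof is your first two paragraphs, stated more tersely.

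Where you go further is in worrying about injectivity of $\rho|_{G_{\text{Inn}}}$, and here you have spotted something the paper glosses over. The paper simply asserts ``clearly any inner automorphism which fixes $V^k(\mathfrak{sl}_2)$ is trivial.'' But your concern is real: the central element $\exp(i\pi J_0)\in G_{\text{Inn}}$ acts trivially on $J,J^\pm$ (whose $J_0$-eigenvalues are $0,\pm 2$) while acting by $-1$ on all four odd generators $G^\pm,Q^\pm$ (eigenvalues $\pm 1$). It is therefore a \emph{nontrivial} element of $G_{\text{Inn}}$ that fixes $V^k(\mathfrak{sl}_2)$ pointwise, hence lies in $G_{\text{Out}}$ by definition. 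Your proposed fix --- ``choose the complementary copy of $G_{\text{Out}}$'' --- cannot work, because $G_{\text{Out}}$ is \emph{defined} as the pointwise stabilizer of $V^k(\mathfrak{sl}_2)$, not chosen; that inner $-I$ genuinely sits in it. Indeed, under the explicit $SL_2$-parametrization of $G_{\text{Out}}$ given immediately after the lemma, this same automorphism (negate all odd generators, fix the rest) is exactly the image of $-I$, so $G_{\text{Inn}}\cap G_{\text{Out}}=\{\pm 1\}$ and the product is $(SL_2\times SL_2)/\langle(-I,-I)\rangle$ rather than an internal (semi)direct product. The obstacle you flagged is thus a genuine issue with the lemma as stated under the paper's definitions, not merely with your proof.
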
 

\begin{proof} Clearly any inner automorphism which fixes $V^k(\mathfrak{sl}_2)$ is trivial, so $G_{\text{Inn}} \cap G_{\text{Out}}$ is trivial. Let $\omega \in G$. The restriction of $\omega$ to $V^k(\mathfrak{sl}_2)$ is an automorphism of $V^k(\mathfrak{sl}_2)$, which has only inner automorphisms, so there exists $\alpha \in G_{\text{Inn}}$ such that $\omega = \alpha$ on $V^k(\mathfrak{sl}_2)$. Letting $\beta = \alpha^{-1} \omega$ and $\gamma = \omega \alpha^{-1}$, it is easy to see that $\beta, \gamma \in G_{\text{Out}}$ and are the unique elements of $G_{\text{Out}}$ such that $\alpha \beta = \omega = \gamma \alpha$. The normality of  $G_{\text{Out}}$ is obvious from the definition, so the claim follows. 
\end{proof}

By weight considerations, $G_{\text{Out}}$ must act linearly on the weight $\frac{3}{2}$ subspace, which is the span of $\{G^{\pm}, Q^{\pm}\}$, and since it preserves OPEs between the weight $1$ fields and weight $\frac{3}{2}$ fields, it must preserve the two-dimensional spaces $\{ G^+, Q^- \}$ and $\{ Q^+,G^-\}$. Using the fact that $G_{\text{Out}}$ preserves OPEs between the weight $\frac{3}{2}$ fields, it is not difficult to check that $\omega \in G_{\text{Out}}$ must have the form 
\begin{equation} \begin{split} 
\omega(G^+) & =   a_0 G^+ + a_1 Q^-,\qquad \omega(Q^-) = b_0 G^+ + b_1 Q^-,
\\ \omega(Q^+) & =  a_0 Q^+ - a_1 G^-, \qquad \omega(G^-) = - b_0 Q^+ + b_1 G^- ,
\end{split} \end{equation} for constants $a_0, a_1, b_0, b_1 \in \mathbb{C}$, where $a_0 b_1 - a_1 b_0 = 1$. One can then identify $G_{\text{Out}}$ with $SL_2$ via $\omega \mapsto \left( \begin{matrix} a_0 & b_0 \\ a_1 & b_1  \end{matrix}\right) $. Moreover, it is easy to verify that $G_{\text{Out}}$ commutes with $G_{\text{Inn}}$. We obtain

\begin{thm} The automorphism group $G$ of $V^k(\mathfrak{n}_4)$ is isomorphic to $SL_2 \times SL_2$.
\end{thm}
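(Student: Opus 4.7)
The plan is to combine the decomposition $G = G_{\text{Out}} \rtimes G_{\text{Inn}}$ from the preceding lemma with two explicit checks: that $G_{\text{Out}} \cong SL_2$, and that the semidirect product collapses to a direct product because the two factors commute.

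First I would pin down the action of an arbitrary $\omega \in G_{\text{Out}}$ on a strong generating set. Since $\omega$ fixes $V^k(\mathfrak{sl}_2)$ pointwise, it fixes the Sugawara contribution, and more importantly it must fix $T$, because $T$ is uniquely determined (up to scalar, fixed by the Virasoro OPE) by the requirement that the weight $1$ fields be primary of conformal weight $1$. Next, I would use the $J$-weight grading to split the four odd weight $\frac{3}{2}$ fields into two charge-$\pm 1$ pairs $\{G^+, Q^-\}$ and $\{Q^+, G^-\}$, on each of which $\omega$ acts by a $2\times 2$ matrix. The OPEs $J^\pm(z)G^{\mp,x}(w) \sim (-1)^x G^{\pm,x}(w)/(z-w)$ link the two pairs and, as already indicated in the text, force the action on the charge $-1$ pair to be essentially the cofactor of the matrix on the charge $+1$ pair, giving the shape displayed in the excerpt with four parameters $a_0,a_1,b_0,b_1$.

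Next I would impose the remaining OPE constraints, specifically $Q^+(z)Q^-(w)$ and $G^+(z)G^-(w)$: these contain the fields $T$ and $J$ (with fixed coefficients involving $c$ and $k$), and since $\omega$ fixes $T$ and $J$, the resulting quadratic relations on $a_0,a_1,b_0,b_1$ reduce to the single condition $a_0 b_1 - a_1 b_0 = 1$. The assignment $\omega \mapsto \bigl(\begin{smallmatrix}a_0 & b_0\\ a_1 & b_1\end{smallmatrix}\bigr)$ is then easily checked to be a group homomorphism; injectivity follows because $\omega$ is determined on the strong generators $\{J,J^\pm,T,G^\pm,Q^\pm\}$ by the matrix and the condition that it fix $V^k(\mathfrak{sl}_2)$, and surjectivity follows since each candidate matrix yields an automorphism (one can verify directly from the listed OPEs, or note that the existence of the $SU(2)$ outer action mentioned in the introduction already produces a $2$-dimensional family, which must then exhaust the constrained $3$-parameter family defined by $\det = 1$).

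Finally I would verify that $G_{\text{Out}}$ and $G_{\text{Inn}}$ commute. Any $\alpha \in G_{\text{Inn}}$ is of the form $\exp(\mathrm{ad}\, \xi_{(0)})$ for some $\xi$ in the span of $J,J^\pm$; since $\omega \in G_{\text{Out}}$ fixes $\xi$, it fixes the operator $\xi_{(0)}$, so $\omega \alpha \omega^{-1} = \exp(\mathrm{ad}\,(\omega\xi)_{(0)}) = \alpha$. Combined with the previous lemma, this promotes the semidirect product to a direct product, yielding $G \cong SL_2 \times SL_2$. The only step demanding any real work is the explicit OPE bookkeeping in the second and third paragraphs, but it is a finite, mechanical verification rather than a conceptual difficulty.
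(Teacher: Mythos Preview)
Your proposal is correct and follows essentially the same approach as the paper: decompose $G$ via the preceding lemma, pin down $G_{\text{Out}} \cong SL_2$ by imposing the OPE constraints on the weight-$\tfrac{3}{2}$ generators, and then verify that $G_{\text{Out}}$ and $G_{\text{Inn}}$ commute to collapse the semidirect product. Your treatment is in fact a bit more explicit than the paper's (the argument for why $T$ is fixed, and the commutation check via $\omega\alpha\omega^{-1}=\exp(\mathrm{ad}\,(\omega\xi)_{(0)})=\alpha$), but the strategy is identical.
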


%--------------------------------------------------------------------------------------------------------%
\section{Construction of the vertex algebras}\label{sec:Construction of the vertex algebra}
%--------------------------------------------------------------------------------------------------------%

In this section we construct the orbifolds of $V^k(\mathfrak{n}_4)$ and for this we use ideas from \cite{Al-Ali2017,2015arXiv151109143A, CLsub}. In order to describe the $\mathbb{Z}/N \mathbb{Z}$-orbifolds, it will be convenient to first describe the orbifold under a certain copy of $U(1)$ which contains all these cyclic groups as subgroups. We are interested in a $U(1)$-action that corresponds to a maximal torus of a diagonal $SL_2$ inside the automorphism group $SL_2 \times SL_2$.
Concretely, $\lambda \in U(1)$ acts as multiplication by $\lambda$ on $G^+$ and $J^+$, multiplication by $\lambda^{-1}$ on $G^-$ and $J^-$ and it leaves the remaining generators $Q^\pm, J$ and $T$ invariant. Note that restricting to the subgroup $\mathbb{Z}/2\mathbb{Z}\subseteq U(1)$ at level $k=1$ yields the automorphism on $H^0(X,\Omega^{CdR})$ on a $K3$ surface $X$ that is induced from a fixed point free involution as mentioned in the introduction.

Our strategy will be to first find minimal strong generating sets for the subalgebras $V^k(\mathfrak{sl}_2)^{U(1)}$ and V$^k(\mathfrak{n}_2)^{U(1)}$ of $V^k(\mathfrak{n}_4)^{U(1)}$. Then we will use these to construct a minimal strong generating set for $V^k(\mathfrak{n}_4)^{U(1)}$. Next, since $\mathbb{Z}/N \mathbb{Z}$ is a subgroup of $U(1)$, we may regard $V^k(\mathfrak{n}_4)^{\mathbb{Z}/N \mathbb{Z}}$ as a $V^k(\mathfrak{n}_4)^{U(1)}$-module. Using this module structure, we will give a generating set for $V^k(\mathfrak{n}_4)^{\mathbb{Z}/N \mathbb{Z}}$ for all $N$. In the case $N=2$, we will obtain a minimal strong generating set for $V^k(\mathfrak{n}_4)^{\mathbb{Z}/2 \mathbb{Z}}$. Finally, using the structure of singular vectors in the simple orbifold $L_k(\mathfrak{n}_4)^{\mathbb{Z}/2 \mathbb{Z}}$, we will obtain minimal strong generating sets for $L_k(\mathfrak{n}_4)^{\mathbb{Z}/2 \mathbb{Z}}$ at all values of $k$.

\subsection{Construction of $V^k(\mathfrak{sl}_2)^{U(1)}$.} 
We begin by determining the $U(1)$-orbifold of the vertex subalgebra $\mathcal{V}=V^k(\mathfrak{sl}_2)$ that is generated by $\{J,J^{\pm}\}$. The structure of this algebra is well-known (see Theorem 2.1 of \cite{DLWY10}), but we provide details for the benefit of the reader. First, $\mathcal{V}$ has a good increasing filtration 
$$\mathcal{V}_{(0)}\subset \mathcal{V}_{(1)}\subset \mathcal{V}_{(2)}\subset \cdots,\qquad \mathcal{V} = \bigcup_{r\geq 0} \mathcal{V}_{(r)},$$ where $\mathcal{V}_{(r)}$ is spanned by all iterated normally ordered monomials in the generators $J, J^{\pm}$ and their derivatives, of length at most $r$.
We defined $\mathcal{V}_{(-1)} = \{0\}$; then $$gr(\mathcal{V}) = \bigoplus_{r\geq 0} \mathcal{V}_{(r)} / \mathcal{V}_{(r-1)} \cong \mathbb{C}[J, \partial J, \partial^2 J,\dots, J^-,\partial J^-,\partial^2 J^-,\dots,  J^+,\partial J^+,\partial^2 J^+,\cdots]$$ as $\partial$-rings, that is, differential commutative rings.
Since $U(1)$ acts linearly on the generators, this action preserves the filtration. By restriction, we obtain a filtration on the orbifold $\mathcal{V}^{U(1)}$:
$$\mathcal{V}^{U(1)}_{(0)}\subset \mathcal{V}^{U(1)}_{(1)}\subset \mathcal{V}^{U(1)}_{(2)}\subset \cdots,\qquad \mathcal{V}^{U(1)}_{(i)} = \mathcal{V}^{U(1)}\cap \mathcal{V}_{(i)}.$$ Moreover, since $U(1)$ preserves the filtration on $\mathcal{V}$, it is apparent that 
$$gr(\mathcal{V})^{U(1)} \cong  \text{gr}\big(\mathcal{V}^{U(1)}\big).$$
The commutative ring $gr(\mathcal{V})^{U(1)}$ is spanned by monomials of the form $$\partial^{i_1} J \cdots \partial^{i_r} J \partial^{j_1} J^+ \cdots \partial^{j_s} J^+ \partial^{k_1} J^- \cdots \partial^{k_s} J^-.$$ These can all be generated by $\{\partial^i J\}_{i=0}^{\infty}$ together with the quadratics $\{u_{i,j} = \partial^i J^+ \partial^j J^-\}_{i,j = 0}^{\infty}$. From the point of view of $\partial$-rings, there is some redundancy in this generating set due to the Leibniz relations $\partial u_{i,j} = u_{i+1,j} + u_{i,j+1}$. It is easy to verify that sets $\{\partial^k u_{n,0}\}_{n,k = 0}^{\infty}$ and $\{u_{i,j}\}_{i,j = 0}^{\infty}$ span the same vector space, so that $\{J, u_{n,0}\}_{n=0}^{\infty}$ also generates $gr(\mathcal{V})^{U(1)}$ as a $\partial$-ring. In fact, it is easy to verify that this is a {\it minimal} generating set for $gr(\mathcal{V})^{U(1)}$ as a $\partial$-ring.

We now define the corresponding fields \begin{equation} \label{def:uij} U_{i,j}(z)= \ :\partial^iJ^+(z)\partial^jJ^-(z): \ \in \mathcal{V}^{U(1)}.\end{equation} 

\begin{lem}\label{LEM: V^U(1) - version 1} $\mathcal{V}^{U(1)}$ is strongly generated by the fields $\{J(z),U_{n,0}(z)\}_{n=0}^{\infty}$.
\end{lem}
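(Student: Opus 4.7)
The plan is to apply the Lian--Linshaw reconstruction principle (Lemma~\ref{THM Lian-Linshaw}) to transfer a $\partial$-ring generating set of $\text{gr}(\mathcal{V}^{U(1)})$ into a strong generating set for $\mathcal{V}^{U(1)}$ itself.

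Essentially all of the preparation is already in place in the paragraphs preceding the statement. The good increasing filtration on $\mathcal{V} = V^k(\mathfrak{sl}_2)$ by length of normally ordered monomials is $U(1)$-stable, so it restricts to a good increasing filtration on the orbifold, and $\text{gr}(\mathcal{V}^{U(1)}) \cong \text{gr}(\mathcal{V})^{U(1)}$ as $\partial$-rings. The text preceding the lemma already identifies the obvious monomial spanning set for this invariant ring, reduces it to $\{J\} \cup \{u_{i,j}\}_{i,j \geq 0}$, and then uses the Leibniz identity $\partial u_{i,j} = u_{i+1,j} + u_{i,j+1}$ to replace the two-parameter family $\{u_{i,j}\}$ by the one-parameter family $\{u_{n,0}\}_{n \geq 0}$ (together with all its $\partial$-derivatives). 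Thus $\text{gr}(\mathcal{V})^{U(1)}$ is generated as a $\partial$-ring by $\{J,\, u_{n,0}\}_{n \geq 0}$.

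To finish I need only exhibit vertex-algebra lifts of these $\partial$-ring generators in the correct filtration degrees. The field $J(z)$ is its own lift in filtration degree~$1$, and the normally ordered product $U_{n,0}(z) = \, :\partial^n J^+(z)\,J^-(z):$ lies in $\mathcal{V}^{U(1)}_{(2)}$ with symbol exactly $u_{n,0}$ under the projection $\phi_2\colon \mathcal{V}^{U(1)}_{(2)} \to \mathcal{V}^{U(1)}_{(2)}/\mathcal{V}^{U(1)}_{(1)}$. Lemma~\ref{THM Lian-Linshaw} then directly yields that $\{J,\, U_{n,0}\}_{n \geq 0}$ strongly generates $\mathcal{V}^{U(1)}$. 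I do not anticipate a substantive obstacle: the real content of the lemma is the classical invariant-theoretic identification of the generators of $\text{gr}(\mathcal{V})^{U(1)}$, which the preceding paragraphs have already carried out, and reconstruction handles the passage back to the vertex algebra for free.
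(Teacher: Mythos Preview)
Your proof is correct and follows exactly the paper's own approach: the paper's proof is a single sentence invoking Lemma~\ref{THM Lian-Linshaw} after noting that $\{J, u_{n,0}\}_{n=0}^{\infty}$ generates $gr(\mathcal{V})^{U(1)}$ as a $\partial$-ring. You have simply spelled out the filtration-degree bookkeeping more explicitly, but the substance is identical.
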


\begin{proof} Since the corresponding elements $\{J, u_{n,0}\}_{n=0}^{\infty}$ generate $gr(\mathcal{V})^{U(1)}$ as a $\partial$-ring, this follows immediately from Lemma \ref{THM Lian-Linshaw}. 
\end{proof}

Even though $\{J, u_{n,0}\}_{n=0}^{\infty}$ is a minimal generating set for $gr(\mathcal{V})^{U(1)}$ as a $\partial$-ring, it turns out that a much smaller set than $\{J(z),U_{n,0}(z)\}_{n=0}^{\infty}$ is needed to strongly generate $\mathcal{V}^{U(1)}$ because of the existence of {\it decoupling relations} arising from quantum corrections to classical relations. Observe that the following relations hold in the commutative ring $gr(\mathcal{V})^{U(1)}$:
$$u_{a,b}u_{c,d} - u_{a,d}u_{c,b} = 0.$$ However, the corresponding normally ordered expression
 $$:U_{a,b}U_{c,d}:  - :U_{a,d}U_{c,b}:$$ in $\mathcal{V}^{U(1)}$ does {\it not} vanish due to the noncommutativity and nonassociativity of the Wick product, and it requires a correction. By weight considerations, we may write
 $$:U_{a,b}U_{c,d}:- :U_{a,d}U_{c,b}: \ = c_{N}U_{N,0}+f(J,U_{0,0},\dots,U_{N-1,0})$$ for some scalar $c_N$ where $a+b+c+d+2=N$ and $f$ is a sum of normally ordered monomials in the fields $J, U_{0,0},\dots, U_{N-1,0}$ and their derivatives. The Leibniz rule yields the identity $U_{a,b} = \sum_{i=0}^b(-1)^i\binom{b}{i}\partial^{b-i}U_{a+i,0}$, and similarly for $U_{c,d}$. This implies that the last equality can be rewritten 
 $$c_NU_{N,0} = g(J,U_{0,0},\dots,U_{N-1,0}),$$ where $g$ is again a normally ordered polynomial in $J, U_{0,0},\dots, U_{N-1,0}$ and their derivatives. This shows that $U_{N,0}$ can be written as a normally ordered polynomial in strong generators of lower conformal weight and their derivatives, provided that $c_N$ does not vanish. Following \cite{2015arXiv151109143A} we will refer to such an expression as a \emph{decoupling relation}.
 
What follows are two technical Lemmas which will serve as a preparation for the proof of Proposition \ref{PROP: V^U(1) - version 2}. For some $n\in\mathbb{N}$ we may write a given element $\omega\in\mathcal{V}_{(2)}^{U(1)}$ of conformal weight $n+2$ in the form
$$\omega = g_{\omega}(J,U_{0,0},\dots,U_{n,0}),$$ where $g_{\omega}$ is a sum of normally ordered monomials in $J,U_{0,0},\dots,U_{n,0}$, and their derivatives. Such an expression is not unique due to the existence of decoupling relations as well as different conventions for normal ordering. Let the coefficient of $U_{n-i,i}$ in $g_{\omega}$ be denoted by $c_{n,i}(\omega)$ and define 
$$c_n(\omega)=\sum_{i=0}^{n}(-1)^{i}c_{n,i}(\omega).$$

\begin{lem}
	\label{LEM: coefficient in decoupling relation is canonical}
	For any $\omega\in\mathcal{V}_{(2)}^{U(1)}$ of weight $n+5$ for $n\in\mathbb{N}$ the coefficient of $U_{n+3,0}$ appearing in $g_{\omega}$ is independent of all choices of normal orderings and is equal to $c_{n+3}(\omega)$.
\end{lem}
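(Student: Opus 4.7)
The plan is to split the lemma into two assertions: first, that for any admissible expression $g_\omega$ the coefficient of $U_{n+3,0}$ in $g_\omega$ equals the alternating sum $c_{n+3}(\omega)$; second, that this common value is independent of the choice of $g_\omega$. The first assertion is essentially a direct Leibniz-rule computation, while the second relies on identifying this coefficient with a canonical invariant obtained from the associated graded.

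For the first assertion I would write the ``linear in $U$'' part of $g_\omega$ as $\sum_k \beta_k\,\partial^k U_{n+3-k,0}$ and apply the Leibniz identity $\partial^k U_{a,0}=\sum_{j=0}^k\binom{k}{j}U_{a+k-j,j}$ to re-express it in the $U_{a,b}$ basis. This reads off $c_{n+3,i}(\omega)=\sum_{k\ge i}\beta_k\binom{k}{i}$; swapping summations and using the standard identity $\sum_{i=0}^k(-1)^i\binom{k}{i}=\delta_{k,0}$ gives $c_{n+3}(\omega)=\sum_i(-1)^i c_{n+3,i}(\omega)=\beta_0$, which is exactly the coefficient of $U_{n+3,0}$ appearing in $g_\omega$.

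For the invariance I would pass to the canonical image $\bar\omega\in \text{gr}_2(\mathcal{V})^{U(1)}$, which depends only on $\omega$ and not on the chosen expression. At weight $n+5$ the single-$u$ component of $\bar\omega$ has a uniquely determined expansion $\sum_{a+b=n+3}\mu_{a,b}\,u_{a,b}$ with canonical scalars $\mu_{a,b}$. Running the binomial computation of the first step in reverse identifies $\beta_0=\sum_i(-1)^i\mu_{n+3-i,i}$, so $\beta_0$ is pinned down by a canonical invariant of $\omega$ and is therefore independent of the particular expression $g_\omega$.

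The main obstacle is justifying the identification $c_{n+3,i}(\omega)=\mu_{n+3-i,i}$ in the presence of higher-degree (quartic or beyond) monomials that may appear in $g_\omega$ when $\omega$ lies in $\mathcal{V}^{U(1)}_{(2)}$ only thanks to non-trivial cancellations in the higher filtration pieces. One must analyze how such pieces, when reduced via a decoupling relation $:U_{a,b}U_{c,d}:-:U_{a,d}U_{c,b}:=c_N U_{N,0}+f$, contribute to the linear-in-$U$ coefficients, and likewise control the corrections produced by Kac's quasi-commutativity and quasi-associativity identities. The quasi-commutativity corrections are total derivatives and hence contain no undifferentiated $U_{n+3,0}$ term, since $U_{n+3,0}$ is a strong generator and not in the image of $\partial$; for the quasi-associativity and decoupling contributions one expects the same alternating-sum cancellation $\sum_i(-1)^i\binom{k}{i}=\delta_{k,0}$ that underlies the first step to produce the required invariance, but verifying this systematically is the technical heart of the argument.
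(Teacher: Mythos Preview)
Your two-step plan and the binomial computation in Step~1 are correct and match the approach of the referenced Lemma~5.2 of \cite{2015arXiv151109143A} (the paper gives no further detail beyond that citation).

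Your Step~2, however, creates an avoidable difficulty. Identifying the formal coefficients $c_{n+3,i}(\omega)$ with the canonical $\mathrm{gr}_2$-coefficients $\mu_{n+3-i,i}$ of $\bar\omega$ genuinely fails once $g_\omega$ contains quartic or higher monomials: those pieces contribute to the $\mu$'s through Wick contractions while contributing nothing to the formal $c$'s, so the identification you want simply does not hold, and your expected ``alternating-sum cancellation'' for quasi-associativity is not the right mechanism. Invoking decoupling relations is also off target: the lemma only asserts invariance under \emph{normal ordering conventions}, i.e.\ reparenthesization of a fixed normally ordered expression, not under rewriting via identities in $\mathcal V$.

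The cleaner argument bypasses $\mathrm{gr}_2$ entirely. The bare monomial $U_{n+3,0}$ consists of a single factor, so a reparenthesization can only change its coefficient if some correction term is itself equal to a scalar multiple of the bare generator $U_{n+3,0}$. Quasi-commutativity corrections are total derivatives, which you dispatch correctly. Quasi-associativity corrections are sums of terms of the shape $\tfrac{1}{(r+1)!}\,{:}(\partial^{r+1}a)(b_{(r)}c){:}$ with $r\ge 0$; whether $b_{(r)}c$ is a scalar or a nontrivial field, the result is either a pure derivative $\partial^{r+1}a$ or a normally ordered product of at least two factors from $\{J,U_{m,0}\}$ and their derivatives, hence never the bare $U_{n+3,0}$. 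This already gives the invariance, and combined with your Step~1 yields the equality with $c_{n+3}(\omega)$.
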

\begin{proof} The proof is analogous to the proof of Lemma 5.2 in \cite{2015arXiv151109143A}.
\end{proof}

\begin{rem} Note that we use strong generators of the form $U_{n,0}$ (\emph{cf. loc. cit.}). We may simply rewrite these by using the Leibniz rule as above with their difference being a total derivative. The factor $(-1)^n$ does not appear in our case because of the different choice in strong generators. 
\end{rem}

\begin{lem} 
	\label{LEM: - decoupling relations 1}
	Let $n\in\mathbb{N}_0$ and let $P_m$ denote a sum of normally ordered products of strong generators of $\mathcal{V}^{U(1)}$ of weight less than $m$ and their derivatives. We have
	\begin{align*}
		:U_{0,0}U_{1,n}: \ = & \left(\frac{2}{n+2} + \frac{k}{n+2}\right)U_{1,n+2} - \frac{1}{n+1} U_{2,n+1} + \\
		\makebox[2.5cm]{\hfill} & + \left(1+\frac{k}{3}\right)U_{3,n} + \frac{2}{(n+1)(n+2)(n+3)}U_{n+3,0} + P_{n+5}, \\
		:U_{0,n}U_{1,0}: \ = & \Big(\frac{2}{n+2} + \frac{k}{2} \Big)U_{1,n+2} - U_{2,n+1} + \\ 
		\makebox[2.5cm]{\hfill} & + \left\{\frac{2}{(n+1)(n+2)(n+3)} + (-1)^n\Big(\frac{1}{n+1} + \frac{k}{n+3}\Big)\right\}U_{n+3,0} + P_{n+5}.
	\end{align*}
\end{lem}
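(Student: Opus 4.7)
The plan is a direct computation using Wick's theorem, combined with the canonical-coefficient identification from Lemma \ref{LEM: coefficient in decoupling relation is canonical}. Since $J^{+}(z)J^{+}(w)$ and $J^{-}(z)J^{-}(w)$ are both regular, the only Wick contractions in
$$:(:J^+J^-:)(:\partial J^+\,\partial^n J^-:):$$
that produce quantum corrections lowering the Li-degree below $4$ come from pairings of a $J^+$ in one factor with a $\partial^j J^-$ in the other, governed by the OPE $J^+(z)J^-(w)\sim k/(z-w)^2+J(w)/(z-w)$. I would first apply the non-commutative Wick formula (Kac) to expand both $:U_{0,0}U_{1,n}:$ and $:U_{0,n}U_{1,0}:$ as sums of normally ordered monomials, and systematically discard (i) the surviving quartic terms which already lie in $\mathcal V^{U(1)}_{(4)}$, (ii) all summands containing a factor of $J$ or $\partial^{\ell} J$, since each such term is a normally ordered polynomial in strong generators of weight $<n+5$ and is therefore absorbed into $P_{n+5}$, and (iii) total derivatives of lower-weight strong generators.

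What remains after step one is a strictly quadratic contribution of the form $\sum_{i} c_{n+3,i}\,U_{n+3-i,\,i}$ coming from a single contraction. Cataloging the four possible contraction patterns between the four basic fields in the two factors yields explicit $n$-dependent rational coefficients, which (after expanding the resulting $\partial_w^j(1/(z-w))^p$ factors) account precisely for the visible terms $U_{1,n+2}$, $U_{2,n+1}$, $U_{3,n}$ in each identity. The coefficients $\tfrac{2+k}{n+2}$, $-\tfrac{1}{n+1}$, $1+\tfrac{k}{3}$ in the first identity, and the corresponding ones in the second, arise transparently from the $k$-term and the $J$-term of the OPE hitting the appropriate derivative order.

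Second, I would identify the coefficient of the canonical generator $U_{n+3,0}$ by invoking Lemma \ref{LEM: coefficient in decoupling relation is canonical}. Using the Leibniz expansion
$$U_{a,b}=\sum_{i=0}^{b}(-1)^{i}\binom{b}{i}\partial^{b-i}U_{a+i,0},$$
I reduce each quadratic remnant to the basis $\{\partial^{m}U_{j,0}\}$ modulo $P_{n+5}$. The lemma guarantees that the resulting $U_{n+3,0}$ coefficient is given, independently of any normal-ordering ambiguities, by the alternating sum $c_{n+3}(\omega)=\sum_{i}(-1)^{i}c_{n+3,i}(\omega)$. Carrying out this alternating sum with the coefficients produced in the first step yields $\tfrac{2}{(n+1)(n+2)(n+3)}$ for $:U_{0,0}U_{1,n}:$, and for the asymmetric product $:U_{0,n}U_{1,0}:$ produces an additional $(-1)^{n}$-contribution accounting for the extra term $(-1)^n\bigl(\tfrac{1}{n+1}+\tfrac{k}{n+3}\bigr)$.

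The main obstacle will be combinatorial bookkeeping: each contraction $\partial^{a}J^+\sim \partial^{b}J^-$ contributes a factor of the form $\binom{a+b}{a}\,a!\,b!$ (or an analogous term with $k$), and the subsequent Leibniz reduction introduces further binomial coefficients that must collapse to the claimed rational numbers. Working consistently in the $\lambda$-bracket formalism should keep this tractable, as the Wick contractions then become polynomial manipulations in $\lambda$. Modulo these combinatorial identities the argument is essentially the one sketched in Lemma 5.2 of \cite{2015arXiv151109143A}, applied in the present setting.
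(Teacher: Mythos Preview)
Your approach---a direct Wick expansion using the $\widehat{\mathfrak{sl}}_2$ OPEs, with the quartic term and all $J$-carrying terms absorbed into $P_{n+5}$---is exactly the paper's ``straightforward computation using only the definition of normal ordering and the commutation relations.'' The only misstep is your second step: the coefficient of $U_{n+3,0}$ displayed in the lemma is \emph{not} the canonical quantity $c_{n+3}(\omega)$ of Lemma~\ref{LEM: coefficient in decoupling relation is canonical}; it is simply one more of the raw coefficients $c_{n+3,i}$ (namely $i=0$) that the contraction computation produces alongside those of $U_{1,n+2}$, $U_{2,n+1}$, $U_{3,n}$. The alternating-sum reduction you describe is precisely what happens \emph{afterward}, in Proposition~\ref{PROP: V^U(1) - version 2}, where the four $c_{n+3,i}$ supplied by this lemma are combined. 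So drop step two and just record all four surviving $U_{a,b}$ terms (with $a+b=n+3$) from the direct computation; invoking Lemma~\ref{LEM: coefficient in decoupling relation is canonical} here would give you the wrong number for the purposes of the present statement.
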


\begin{proof}
	The proof is a straightforward computation using only the definition of normal ordering and the commutation relations of $\widehat{\mathfrak{sl}}_2$ at level $k$.
\end{proof}

\begin{prop}
	\label{PROP: V^U(1) - version 2}
	For any non-vanishing level $k$ the vertex algebra $V^k(\mathfrak{sl}_2)^{U(1)}$ is of type $\mathcal{W}(1,2,3,4,5)$. A set of minimal strong generators is $\{J,U_{n,0}\}_{n=0}^3$.
\end{prop}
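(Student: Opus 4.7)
The plan is to refine the strong generating set $\{J, U_{n,0} : n \ge 0\}$ of $V^k(\mathfrak{sl}_2)^{U(1)}$ provided by Lemma~\ref{LEM: V^U(1) - version 1} down to the finite set $\{J, U_{0,0}, U_{1,0}, U_{2,0}, U_{3,0}\}$. For this I need, for each $n \ge 4$, a \emph{decoupling relation} expressing $U_{n,0}$ as a normally ordered polynomial in the remaining generators and their derivatives. I proceed by induction on $n \ge 4$; by the inductive hypothesis it suffices at each step to express $U_{n,0}$ in terms of $J, U_{0,0}, \ldots, U_{n-1,0}$ and their derivatives.

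For the inductive step at weight $n+2$, I would apply Lemma~\ref{LEM: - decoupling relations 1} with parameter $m = n-3$. The element $:U_{0,0} U_{1,n-3}:$ already lies in the subalgebra generated by $\{U_{0,0}, U_{1,0}, \ldots, U_{n-2,0}\}$, because expanding $U_{1,n-3}$ via the Leibniz identity $U_{a,b} = \sum_{\ell=0}^b (-1)^\ell \binom{b}{\ell} \partial^{b-\ell} U_{a+\ell,0}$ only involves $U_{j,0}$ with $j \le n-2$. The Lemma writes $:U_{0,0} U_{1,n-3}:$ as an explicit linear combination of $U_{1,n-1}, U_{2,n-2}, U_{3,n-3}, U_{n,0}$ plus a sum $P_{n+2}$ of products of strictly lower-weight strong generators. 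Re-expressing each of the auxiliary terms $U_{a,b}$ with $a+b = n$ through the same Leibniz identity extracts a $U_{n,0}$ contribution of $(-1)^b$ times the coefficient of $U_{a,b}$ (from $\ell = b$, derivative order $0$), together with derivatives of $U_{j,0}$ for $j \le n-1$. Collecting the resulting $U_{n,0}$ pieces onto a single side produces an identity
\[
\Lambda(n,k)\, U_{n,0} \;=\; :U_{0,0} U_{1,n-3}: \;-\; \bigl(\text{polynomial in } J, U_{0,0}, \ldots, U_{n-1,0} \text{ and their derivatives}\bigr),
\]
where $\Lambda(n,k)$ is an explicit scalar, linear in $k$, given up to sign by $\tfrac{2}{(n-2)(n-1)n} \pm \bigl[\tfrac{2+k}{n-1} + \tfrac{1}{n-2} + 1 + \tfrac{k}{3}\bigr]$. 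Provided $\Lambda(n,k) \ne 0$, the inductive hypothesis completes the step.

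For each $n$, the scalar $\Lambda(n,\cdot)$ vanishes at a single value of $k$, and some of these values are nonzero. To cover them, I would apply the second formula in Lemma~\ref{LEM: - decoupling relations 1} to $:U_{0,n-3} U_{1,0}:$, producing an alternative decoupling coefficient $M(n,k)$, again linear in $k$. A direct comparison of the two formulas (for $n = 4$ one finds $\Lambda(4,k) = -\tfrac{25}{12} - \tfrac{2k}{3}$ and $M(4,k) = -\tfrac{25}{12} - \tfrac{3k}{4}$) shows that the zeros of $\Lambda(n,\cdot)$ and $M(n,\cdot)$ are distinct for every $n \ge 4$, so for every $k \ne 0$ at least one of the two relations produces a valid decoupling of $U_{n,0}$; if necessary one takes a suitable linear combination $\alpha :U_{0,0} U_{1,n-3}: + \beta :U_{0,n-3} U_{1,0}:$.

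Minimality of $\{J, U_{0,0}, U_{1,0}, U_{2,0}, U_{3,0}\}$ follows from the minimality of $\{J, u_{n,0} : n \ge 0\}$ as a $\partial$-ring generating set of $gr(V^k(\mathfrak{sl}_2))^{U(1)}$, observed just before Lemma~\ref{LEM: V^U(1) - version 1}: passage to the top filtration degree would turn any decoupling of one of $U_{0,0}, U_{1,0}, U_{2,0}, U_{3,0}$ into a forbidden $\partial$-ring relation among $\{J, u_{n,0}\}$. The main technical obstacle is the explicit verification that the pair of linear polynomials $\Lambda(n,k), M(n,k)$ never vanishes simultaneously at a nonzero level --- a routine but somewhat delicate check of the coefficients coming out of Lemma~\ref{LEM: - decoupling relations 1}, and the step where Lemma~\ref{LEM: coefficient in decoupling relation is canonical} is invoked to ensure the extracted $U_{n,0}$-coefficient is independent of the normal ordering conventions used in writing down $P_{n+2}$.
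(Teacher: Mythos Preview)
Your approach is correct but takes a slightly roundabout route compared with the paper's. The paper works not with $:U_{0,0}U_{1,m}:$ and $:U_{0,m}U_{1,0}:$ separately, but directly with their difference $\omega_m = \ :U_{0,0}U_{1,m}: - :U_{0,m}U_{1,0}:$. Because the leading quartic term $u_{0,0}u_{1,m}-u_{0,m}u_{1,0}$ vanishes in $gr(\mathcal{V})^{U(1)}$, the element $\omega_m$ drops into $\mathcal{V}_{(2)}^{U(1)}$, which is precisely the setting of Lemma~\ref{LEM: coefficient in decoupling relation is canonical}. Subtracting the two formulas of Lemma~\ref{LEM: - decoupling relations 1} then gives the $U_{m+3,0}$--coefficient $c_{m+3}(\omega_m)=(-1)^{m+1}k\,\tfrac{m(m+5)}{6(m+2)(m+3)}$, which is nonzero for every $m\ge 1$ as soon as $k\ne 0$. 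This is exactly your $\Lambda-M$, so your ``simultaneous nonvanishing'' check collapses to a one-line observation: $\Lambda(n,\cdot)-M(n,\cdot)$ is a nonzero multiple of $k$, hence for $k\ne 0$ at most one of $\Lambda,M$ can vanish. What the paper's packaging buys is that no case analysis is needed at all, and the role of Lemma~\ref{LEM: coefficient in decoupling relation is canonical} becomes transparent.

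One small correction: your closing remark that Lemma~\ref{LEM: coefficient in decoupling relation is canonical} is needed to make the extracted coefficient independent of the normal-ordering conventions in $P_{n+2}$ is misplaced. That lemma concerns elements of $\mathcal{V}_{(2)}^{U(1)}$, whereas your $:U_{0,0}U_{1,n-3}:$ lies in $\mathcal{V}_{(4)}^{U(1)}$. In your argument no canonicity statement is actually required: Lemma~\ref{LEM: - decoupling relations 1} is a specific identity, and by definition $P_{n+2}$ is a normally ordered polynomial in generators of weight strictly less than $n+2$, so it contains no standalone $U_{n,0}$ term. The coefficient $\Lambda(n,k)$ is therefore unambiguously the one you read off from the displayed terms. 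The minimality argument at the end is the same as the paper's.
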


\begin{proof} We consider relations of the form \[\omega_n\stackrel{\mathclap{\normalfont\mbox{\scriptsize{def}}}}{=}\ :U_{0,0}U_{1,n}-U_{0,n}U_{1,0}:\] for $n\in\mathbb{N}$. The quadratic term vanishes due to the relation $u_{0,0} u_{1,n} - u_{0,n} u_{1,0} = 0$ in $gr(\mathcal{V})^{U(1)}$, so $\omega_n\in\mathcal{V}_{(2)}^{U(1)}$. Writing $\omega_n$ as a sum of normally ordered monimials in $J, U_{0,0},\dots, U_{n+3,0}$ and their derivatives, the coefficient of $U_{n+3,0}$ is canonical in the sense of Lemma \ref{LEM: coefficient in decoupling relation is canonical}. Due to Lemma \ref{LEM: - decoupling relations 1} it can be easily computed and equals \[c_{n+3}(\omega_n) = (-1)^{n+1}k\frac{n(n+5)}{6(n+2)(n+3)}.\] This shows existence of a decoupling relation for $U_{n+4,0}$ for all $n\in\mathbb{N}_0$ at any non-vanishing level $k$. Hence, $V^k(\mathfrak{sl}_2)^{U(1)}$ is strongly generated by $\{J,U_{n,0}\}_{n=0}^3$ for $k\neq 0$. Since there are no relations in $gr(\mathcal{V})^{U(1)}$ among $J, u_{n,0}$ for $n \leq 3$, there can be no further decoupling relation among the above strong generators.
\end{proof}

\subsection{Construction of $V^k(\mathfrak{n}_2)^{U(1)}$.}  The last Proposition can be used to prove a similar statement about $V^k(\mathfrak{n}_2)^{U(1)}$. It will be convenient to introduce the following fields \[V_{a,b}\stackrel{\mathclap{\normalfont\mbox{\scriptsize{def}}}}{=}\ :\partial^aG^{+}\partial^bG^{-}:.\] 
\begin{prop}
	\label{U(1)-orbifold of N=2 vertex algebra}
	For any level $k\neq0,-2$ the vertex algebra $V^k(\mathfrak{n}_2)^{U(1)}$ is of type $\mathcal{W}(1,2,3,4,5)$. A set of minimal strong generators is $\{J,T,V_{n,0}\}_{n=0}^2$.
\end{prop}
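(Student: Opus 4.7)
The plan is to mirror Proposition \ref{PROP: V^U(1) - version 2}, with $G^{\pm}$ in place of $J^{\pm}$ and fermionic signs tracked throughout. The setup is unchanged: $\mathcal{V} = V^k(\mathfrak{n}_2)$ carries the good increasing filtration whose $r$-th level is spanned by normally ordered monomials in $J, T, G^{\pm}$ and their derivatives of length at most $r$. The $U(1)$-action preserves this filtration, fixes $J$ and $T$, and rotates $G^{\pm}$ by opposite characters, so classical invariant theory identifies $\operatorname{gr}(\mathcal{V}^{U(1)}) \cong \operatorname{gr}(\mathcal{V})^{U(1)}$ as the super-commutative $\partial$-ring generated by $J$, $T$, and the quadratic invariants $v_{a,b}$; modulo derivatives these reduce to $\{v_{n,0}\}_{n\geq 0}$. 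By Lemma \ref{THM Lian-Linshaw}, the set $\{J, T, V_{n,0}\}_{n \geq 0}$ strongly generates $V^k(\mathfrak{n}_2)^{U(1)}$.

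To cut this down, one produces, for each $N \geq 3$, a decoupling relation expressing $V_{N,0}$ in terms of $J, T$, and $V_{m,0}$ for $m < N$ and their derivatives. For $N \geq 4$ the construction is the direct fermionic analog of the $\mathfrak{sl}_2$ argument: super-commutativity of the associated graded yields $v_{0,0} v_{1,n} + v_{1,0} v_{0,n} = 0$ (with a sign opposite to the bosonic case, because $G^{\pm}$ are odd), so
\[ \omega_n \ := \ :\!V_{0,0} V_{1,n}\!: \ + \ :\!V_{1,0} V_{0,n}\!: \qquad (n \geq 1) \]
lies in $\mathcal{V}^{U(1)}_{(2)}$. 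Rewriting $\omega_n$ as a normally ordered polynomial in $J, T, V_{0,0}, \dots, V_{n+3,0}$ and derivatives, the coefficient $c_{n+3}(\omega_n)$ of $V_{n+3,0}$ is canonical by the fermionic analog of Lemma \ref{LEM: coefficient in decoupling relation is canonical}, whose proof adapts verbatim. The boundary case $N = 3$ (weight $6$) requires an additional decoupling that is unavailable in the bosonic setting: because $(g^+)^2 = 0$, the element $\sigma := \ :\!V_{0,0} V_{0,0}\!:$ already lies in strictly lower filtration and yields a decoupling of $V_{3,0}$ after a single OPE calculation.

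The key computation, in analogy with Lemma \ref{LEM: - decoupling relations 1}, is to evaluate $c_{n+3}(\omega_n)$ and the corresponding coefficient coming from $\sigma$ explicitly using the $\mathcal{N}=2$ OPEs in \eqref{OPEs}. One expects these coefficients to be rational functions of $k$ whose numerators share a factor $k(k+2)$ but no other common vanishing: the factor $k$ tracks the $J$- and $T$-contributions to $G^{+}(z) G^{-}(w)$ that drive the reduction, and the factor $k+2$ reflects the degeneration of $V^k(\mathfrak{n}_2)$ at $k = -2$. Granting the non-vanishing of these coefficients for all $k \neq 0, -2$ and all $n \geq 0$ gives strong generation by $\{J, T, V_{0,0}, V_{1,0}, V_{2,0}\}$. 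Minimality is automatic since $J, T, v_{0,0}, v_{1,0}, v_{2,0}$ are algebraically independent modulo derivatives in $\operatorname{gr}(V^k(\mathfrak{n}_2)^{U(1)})$, precluding decoupling relations at weights $\leq 5$. The principal obstacle will be the OPE bookkeeping needed to isolate the $n$- and $k$-dependence of these coefficients and confirm that their common vanishing locus in $k$ is exactly $\{0, -2\}$.
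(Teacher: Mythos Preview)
Your approach is genuinely different from the paper's. The paper does \emph{not} mimic the $\mathfrak{sl}_2$ argument directly; instead it invokes the Kazama--Suzuki coset realization $V^{\ell}(\mathfrak{n}_2)\cong \text{Com}(\mathcal{H}_1,V^k(\mathfrak{sl}_2)\otimes\mathcal{E})$ (valid for $k\neq -2$), manipulates the two commuting $U(1)$-actions, and reduces the question to Proposition~\ref{PROP: V^U(1) - version 2} together with the elementary fact that $\mathcal{E}^{U(1)}$ is strongly generated by $:bc:$. This avoids any new OPE computation; the type $\mathcal{W}(1,2,3,4,5)$ is read off from the $\mathfrak{sl}_2$ result, and the exclusion $k=-2$ comes from the coset realization failing there rather than from a vanishing decoupling coefficient.

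Your direct approach is workable, but two remarks are in order. First, you are making life harder than necessary with $\omega_n$. Since $G^{\pm}$ are odd, already $v_{0,0}v_{n,0}=0$ in the associated graded for \emph{every} $n\geq 0$ (it contains $(g^-)^2$), so $:V_{0,0}V_{n,0}:$ itself lies in filtration degree $\leq 2$ and produces a decoupling relation for $V_{n+3,0}$ uniformly in $n$; the paper in fact records this computation later (the unnamed lemma preceding Theorem~\ref{THM: Minimal strong generators of U(1)-orbifold of V^k(n_4)}), with leading coefficient $\frac{k(6+n)}{3(3+n)}$. Second, note that this coefficient carries only a factor of $k$, not $k(k+2)$. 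Your expectation that the decoupling coefficients vanish at $k=-2$ is therefore not borne out: the restriction $k\neq -2$ in the statement reflects the degeneration of the conformal structure of $V^k(\mathfrak{n}_2)$ at that level, not a failure of decoupling. If you pursue the direct route, the honest conclusion is that the decoupling argument alone gives the result for $k\neq 0$, and the exclusion of $k=-2$ must be justified separately.
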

\begin{proof}
	Denote the standard strong generators of $V^k(\mathfrak{sl}_2)$ by $\{H,X^{\pm}\}$ and let $\mathcal{E}$ be the $bc$-system of rank 1. Considering the tensor product $V^k(\mathfrak{sl}_2)\otimes\mathcal{E}$, we will abuse notation and denote the strong generators by the same symbols. Let $K=\frac{1}{2}H-:bc:$. The zero mode $K_0$ integrates to a $U(1)$-action on $V^k(\mathfrak{sl}_2)\otimes\mathcal{E}$. Lemma 8.6 in \cite{2014arXiv1407.8512C} shows that if $k\neq-2$ then 
	\begin{equation}
	\label{Isomorphism of V(n2) and V(sl2) x E}
		V^{\ell}(\mathfrak{n}_2)\cong \text{Com}(\mathcal{H}_1,V^k(\mathfrak{sl}_2)\otimes\mathcal{E})
	\end{equation}
	 with central charge $\frac{3k}{k+2}$ where $\mathcal{H}_1$ is the Heisenberg vertex algebra generated by $K$. The zero mode $H_0$ integrates to a $U(1)$-action as well. Denote the group associated to the zero mode of $K$ ($H$) by $G_1$ ($G_2$) and let $\mathcal{H}_2$ be the Heisenberg vertex algebra generated by $H$. 
	\begin{align*}
		\mathcal{H}_1\otimes V^{\ell}(\mathfrak{n}_2)^{G_2} &\cong \left((V^k(\mathfrak{sl}_2)\otimes\mathcal{E})^{G_2}\right)^{G_1} \\
		&\cong (\mathcal{H}_2\otimes \text{Com}(\mathcal{H}_2,V^k(\mathfrak{sl}_2)\otimes\mathcal{E}))^{G_1} \\
		&\cong \mathcal{H}_2\otimes \text{Com}(\mathcal{H}_2,V^k(\mathfrak{sl}_2))\otimes\mathcal{E}^{G_1} 
	\end{align*}
	The commutativity of the two group actions was used in the first equality. All $G_1$-invariant fields in $\mathcal{E}$ can be strongly generated by the fields $:\partial^ib\partial^jc:$ for $i,j\geq0$. By the action of the derivation a set of strong generators is given by $\{:\partial^ibc:\}_{i=0}^{\infty}$. It is easy to show that the equality \[:(:\partial^nbc:)(:bc:): \ = \frac{n+2}{n+1}:\partial^{n+1}bc:+\partial\omega\] holds for $n\geq0$ where $\omega$ is a linear combination of the fields $\partial^{n-i}:\partial^ibc:$ for $i=0,\dots,n$. This implies that $:bc:$ strongly generates $\mathcal{E}^{G_1}$. From the above isomorphism one can deduce
	\begin{align*}
		V^{\ell}(\mathfrak{n}_2)^{G_2} &\cong \text{Com}(\mathcal{H}_1,\mathcal{H}_2\otimes \text{Com}(\mathcal{H}_2,V^k(\mathfrak{sl}_2))\otimes\mathcal{E}^{G_1}).
	\end{align*}
	It now follows from Proposition \ref{PROP: V^U(1) - version 2} that the only strong generator of weight 1 is an element of the commutant $\text{Com}(\mathcal{H}_1,\mathcal{H}_2\otimes\mathcal{E}^{G_1})$ which is isomorphic to a Heisenberg vertex algebra. Hence, $V^k(\mathfrak{n}_2)^{G_2}$ is of type $\mathcal{W}(1,2,3,4,5)$. The Heisenberg and the Virasoro field of $V^k(\mathfrak{n}_2)$ are elements of the kernel of $H_0$. The isomorphism in (\ref{Isomorphism of V(n2) and V(sl2) x E}) for the weight $\frac{3}{2}$ fields is given by 
$$:X^{+}b: \ \mapsto \delta^+ G^+,\qquad :X^- c: \ \mapsto \delta^- G^-,\ \text{where} \ \delta^+\delta^-=2+k.$$
From the action of the zero mode $H_0$ it is immediate that a set of strong generators of $V^k(\mathfrak{n}_2)^{G_2}$ of weight 3, 4 and 5 can be given by $\{V_{n,0}\}_{n=0}^2$.
\end{proof}

\subsection{Construction of $V^k(\mathfrak{n}_4)^{U(1)}$.} We are now ready to find a minimal strong generating set for $V^k(\mathfrak{n}_4)^{U(1)}$, where $\lambda \in U(1)$ acts as multiplication by $\lambda$ on $G^+$ and $J^+$, multiplication by $\lambda^{-1}$ on $G^-$ and $J^-$, and fixes the remaining generators $Q^\pm, J$ and $T$. First, we define a filtration on $V^k(\mathfrak{n}_4)$,
$$V^k(\mathfrak{n}_4)_{(0)} \subset V^k(\mathfrak{n}_4)_{(1)} \subset \cdots, $$ where $V^k(\mathfrak{n}_4)_{(n)}$ is spanned by all normally ordered monomials in the generators $J, J^{\pm}, T, G^{\pm}, Q^{\pm}$ and their derivatives, such that the total number of fields $G^{\pm}, Q^{\pm}$ and their derivatives, is at most $n$. It is easily verified from the OPE algebra that this is a weak increasing filtration. Setting $V^k(\mathfrak{n}_4)_{(-1)} = \{0\}$,  let $$\mathcal{W} = gr(V^k(\mathfrak{n}_4))= \bigoplus_{n\geq 0} V^k(\mathfrak{n}_4)_{(n)}.$$ The OPEs among $G^{\pm}, Q^{\pm}$ are now regular in $\mathcal{W}$ but the remaining OPEs are unchanged. 

Next, $\mathcal{W}$ has a good increasing filtration where $\mathcal{W}_{(n)}$ is spanned by all normally ordered monomials in the generators $J, J^{\pm}, T,G^{\pm}, Q^{\pm}$ and their derivatives of total length at most $n$. Clearly $\text{gr}(\mathcal{W})$ is just the differential polynomial superalgebra on these generators and their derivatives. It is evident that the action of $U(1)$ preserves the weak filtration on $V^k(\mathfrak{n}_4)$ as well as the good filtration on $\mathcal{W} = gr(V^k(\mathfrak{n}_4))$. Restricting the filtrations to the $U(1)$ orbifolds, it follows that 
$$gr(V^k(\mathfrak{n}_4)^{U(1)}) \cong \mathcal{W}^{U(1)},\qquad gr(\mathcal{W}^{U(1)}) \cong gr(\mathcal{W})^{U(1)}.$$

The commutative ring $gr(\mathcal{W})^{U(1)}$ is clearly generated as a $\partial$-ring by the even elements $J, T, U_{n,0}, V_{n,0}$ together with the odd elements
$$Q^{\pm}, \qquad a_{n,0} = \partial^n J^+G^{-} ,\qquad b_{n,0} = \partial^nJ^-G^{+}, \qquad n\geq 0.$$ It then follows from Lemma \ref{THM Lian-Linshaw} and the analogous statement for weak filtrations, that the corresponding fields in $V^k(\mathfrak{n}_4)^{U(1)}$, namely, the even fields $J, T, U_{n,0}, V_{m,0}$ together with the odd fields
$$Q^{\pm}, \qquad A_{n,0} = \ :\partial^nJ^+G^{-}:,\qquad  B_{n,0} = \ \ :\partial^nJ^-G^{+}:,\qquad n\geq 0,$$
are a strong generating set for $V^k(\mathfrak{n}_4)^{U(1)}$.

In particular, note that the vertex algebras from Proposition \ref{PROP: V^U(1) - version 2} and \ref{U(1)-orbifold of N=2 vertex algebra} are vertex subalgebras of $V^k(\mathfrak{n}_4)^{U(1)}$. We already have shown that $U_{n,0}$ and $V_{m,0}$ for $n\geq 4$ and $m\geq 3$, are unnecessary. 
Even though $U_{3,0}$ was needed in $V^k(\mathfrak{sl}_2)^{U(1)}$, it turns out to be unnecessary in $V^k(\mathfrak{n}_4)^{U(1)}$ since there exists a decoupling relation for all non-vanishing levels $k$; see (\ref{Appendix: Decoupling relation for U_{3,0}}). In order to find a minimal strong generating set for $V^k(\mathfrak{n}_4)^{U(1)}$, we shall construct decoupling relations for $A_{n,0}$ and $B_{n,0}$ for all $n\geq 3$. The proofs of the following two Lemmas are straightforward computations and will be omitted.

\begin{lem}
	Let $P_{m}$ be a sum of normally ordered products of weight $m$ such that each summand includes the field $\partial^i T$ or $\partial^j J$, and at most one field $\partial^k V_{l,0}$ for some $i,j,k,l,m\in\mathbb{N}_0$. Let $n\in\mathbb{N}_0$. 
	\begin{align*}
		\frac{k(6+n)}{3(3+n)}V_{n+3,0} =& :V_{0,0}V_{n,0}: + k\partial V_{n+2,0} - 
		\left(\frac{1}{2+2n}+k\right)\partial^2 V_{n+1,0} + \frac{k}{3}\partial^3 V_{n,0} + P_{n+6}
	\end{align*}
\end{lem}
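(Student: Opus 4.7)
The plan is to compute $:V_{0,0}V_{n,0}:\ =\ :(:G^+G^-:)(:\partial^nG^+G^-:):$ directly, using the $G^+G^-$ OPE from \eqref{OPEs} (together with the central-charge identity $c=6k$) and the quasi-associativity of the normal ordered product in a super vertex algebra. The key algebraic input is the vanishing $:G^\pm G^\pm:\ =0$: since $G^\pm(z)G^\pm(w)$ is absent from \eqref{OPEs}, these fields have trivial self-OPE, and a short mode computation on the vacuum shows $G^\pm_{(-1)}G^\pm=0$. Consequently, any iterated normal ordering of four $G$-fields in which two like-sign odd factors appear adjacent vanishes identically.

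I would first reassociate $:(:G^+G^-:)(:\partial^nG^+G^-:):$ so that the four odd factors sit in a single iterated normal ordering of the form $:G^+G^-\partial^nG^+G^-:$, collecting quasi-associativity corrections that carry at most two $G$-factors. Applying super skew-symmetry to swap the middle pair $G^-\leftrightarrow\partial^nG^+$ produces the term $-:G^+\partial^nG^+G^-G^-:$, which vanishes by the observation above. Hence the surviving content of $:V_{0,0}V_{n,0}:$ splits into (i) single Wick contractions of type $G^-\cdot\partial^nG^+$ or $G^+\cdot G^-$, each producing a two-field $V_{a,b}$ multiplied by one of the OPE coefficients $c/3$, $J$, or $T\pm\tfrac{1}{2}\partial J$ together with a binomial from Leibniz, and (ii) double Wick contractions, yielding polynomials in $J$, $T$ and their derivatives with no $G$-factor. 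Contributions from (i) carrying a $J$ or $T$ factor, together with all of (ii), have the form ``($J$ or $T$ factor)$\cdot$(at most one $V_{l,0}$)'' and are absorbed into $P_{n+6}$. The $c/3$ contributions from (i), which carry no $J$ or $T$, assemble into a linear combination of $V_{a,b}$'s; applying the Leibniz identity $V_{a,b}=\sum_{i=0}^{b}(-1)^i\binom{b}{i}\partial^{b-i}V_{a+i,0}$ then rewrites them in terms of $V_{\cdot,0}$ and their derivatives.

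Extracting the coefficient of the leading generator $V_{n+3,0}$ reduces to isolating the order-$(n+3)$ pole from the $\partial^n$-shifted $G^+G^-$ OPE and tracking the Leibniz binomials; a direct computation gives the stated value $\frac{k(6+n)}{3(3+n)}$, after using $c=6k$. The coefficients of $\partial V_{n+2,0}$, $\partial^2V_{n+1,0}$, and $\partial^3V_{n,0}$ are read off analogously from the subleading $J$- and $T$-pole contributions in the shifted OPE. The main obstacle is purely combinatorial: keeping track of super signs arising from reordering four odd fields combined with the binomial coefficients from the Leibniz expansion of $\partial^n$. To manage this efficiently, one may apply the canonical extraction argument from Lemma \ref{LEM: coefficient in decoupling relation is canonical}, adapted to the fields $V_{a,b}$, which makes the $V_{n+3,0}$-coefficient independent of normal-ordering conventions, so that all remaining lower-weight material of the required type can be relegated to $P_{n+6}$ without further bookkeeping.
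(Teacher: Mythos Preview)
The paper omits the proof entirely, noting only that it is a ``straightforward computation''. Your overall strategy is the natural one and matches what such a computation would entail: use quasi-associativity and the vanishing $:G^\pm G^\pm:\,=0$ to eliminate the quartic-in-$G$ piece, then read off the remaining $V_{a,b}$ contributions from the $G^+G^-$ OPE.

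However, there is an internal inconsistency in your sketch that points to a real gap. You first assert that single contractions landing on the $J$- or $T$-pole produce terms of the form ``($J$ or $T$)$\cdot V_{a,b}$'' and hence go into $P_{n+6}$; you then say the coefficients of $\partial V_{n+2,0}$, $\partial^2V_{n+1,0}$, $\partial^3V_{n,0}$ are read off from those same $J$- and $T$-poles. These two claims are incompatible. If all pure-$V$ contributions came only from the $c/3=2k$ pole, every such coefficient would be proportional to $k$; yet the coefficient of $\partial^2V_{n+1,0}$ in the lemma contains the $k$-independent piece $-\tfrac{1}{2(n+1)}$. The mechanism you are missing is the integral term in the non-commutative Wick formula (equivalently, the iterated OPE hidden in the quasi-associativity corrections): when a $J$ or $T$ appears from a first contraction, it can act back on the remaining $G^\pm$ via $[J_\mu G^\pm]=\pm G^\pm$ and $[T_\mu G^\pm]=(\partial+\tfrac{3}{2}\mu)G^\pm$, yielding pure $G^\pm$ with no residual $J$ or $T$ factor. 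These secondary contractions are precisely what produce the $k$-independent contributions to the $\partial^a V_{b,0}$ coefficients; your free-field-style split into ``single'' and ``double'' Wick contractions does not see them and must be refined before the stated coefficients can actually be extracted.
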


\begin{lem}
	Let $n\in\mathbb{N}_0$ and let $P_m$ be a sum of normally ordered products of strong generators of $V^k(\mathfrak{n}_4)^{U(1)}$ of weight less than $m$ and their derivatives.
	\begin{align*}
	\frac{nk}{2(n+2)}A_{n+2,0} + P_{n+\frac{9}{2}} = & :U_{n,0}A_{0,0}: - :U_{0,0}A_{n,0}: + :(U_{n+1,0}-\partial U_{n,0})Q^-: \\
	 	\makebox[75pt]{\hfill} & - \frac{1}{n+1}\sum_{i=0}^{n+1}(-1)^i\binom{n}{i}:\partial^iU_{n+1-i,0}Q^-: \\
	\frac{nk}{2(n+2)}B_{n+2,0} + P_{n+\frac{9}{2}} = & \sum_{i=0}^n(-1)^i\binom{n}{i}\left\{:\partial^iU_{n-i,0}B_{0,0}: + :\partial^iU_{n+1-i,0}Q^+:\right\} \\ 
		\makebox[75pt]{\hfill} & - \frac{1}{n+1}:U_{n+1,0}Q^+: + (-1)^{n+1}:U_0B_{n,0}: 
	\end{align*}
\end{lem}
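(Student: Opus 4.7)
The strategy is to arrange the right-hand side so that its image in $gr(\mathcal{W})^{U(1)}$ vanishes at the top classical degree, and then to extract the coefficient of $A_{n+2,0}$ from the quantum correction produced by normal-ordering. In $gr(\mathcal{W})^{U(1)}$ one has $u_{n,0}\,a_{0,0}=\partial^n J^+\cdot J^-J^+ G^-=u_{0,0}\,a_{n,0}$, so the difference $:U_{n,0}A_{0,0}:-:U_{0,0}A_{n,0}:$ already lies in a lower piece of the good filtration. A Leibniz-rule computation
$$\partial^i U_{a,0}=\sum_{j=0}^{i}\binom{i}{j}:\partial^{a+j}J^+\,\partial^{i-j}J^-:$$
similarly shows that the remaining combination $:(U_{n+1,0}-\partial U_{n,0})Q^-:-\tfrac{1}{n+1}\sum_{i=0}^{n+1}(-1)^i\binom{n}{i}:\partial^iU_{n+1-i,0}Q^-:$ collapses classically into total derivatives of objects of strictly lower filtration degree. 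Hence the entire right-hand side sits in weak-filtration level $1$ and, after passing to the good gradation, in a strictly lower degree, so the only strong generator of the correct weight it can produce is $A_{n+2,0}$ itself.

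To compute the quantum correction, apply quasi-associativity of the normally ordered product
$$:a(:bc:):\;-\;:(:ab:)c:\;=\;\sum_{m\ge 0}\tfrac{1}{(m+1)!}\bigl(:(\partial^{m+1}a)(b_{(m)}c):+:(\partial^{m+1}b)(a_{(m)}c):\bigr)$$
together with the OPEs in \eqref{OPEs}. The only channel that produces $\partial^{n+2}J^+$ paired with $G^-$ is the double pole $J^+(z)J^-(w)\sim k/(z-w)^2$ acting between a factor $\partial^a J^+$ inside $U_{n,0}$ and the $J^-$ inside $A_{n,0}$ (and vice versa); each such contraction contributes $k\,\partial^{a+c+2}J^+$ weighted by binomial factors from the Leibniz rule. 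The simple-pole contractions $J^\pm(z)J^\mp(w)\sim\pm J(w)/(z-w)$ and $J^+(z)G^-(w)\sim -Q^-(w)/(z-w)$ account for the $Q^-$- and $J$-dependent correction terms on the right-hand side, which are included precisely to cancel them. Summing all contributions and collecting combinatorial prefactors, the coefficient of $A_{n+2,0}$ simplifies to $\tfrac{nk}{2(n+2)}$, while every other contribution is a normally ordered polynomial in strong generators of weight less than $n+\tfrac{9}{2}$ and is absorbed into $P_{n+9/2}$.

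The identity for $B_{n+2,0}$ is established by the same procedure, with the roles of $J^+$ and $J^-$ (and correspondingly of $G^\pm$ and $Q^\pm$) swapped. The signs $(-1)^i$ and $(-1)^{n+1}$ in that formula then arise naturally from the Leibniz expansion of $\partial^iU_{n-i,0}$ against $B_{0,0}=\,:J^- G^+:$ and from commuting odd fields past one another. Alternatively one can deduce the $B$-statement from the $A$-statement by applying an involutive automorphism in the group identified in Section~\ref{sec:3} that simultaneously interchanges $J^+\leftrightarrow J^-$ and $G^\pm\leftrightarrow Q^\mp$.

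The main obstacle is purely combinatorial bookkeeping: one must track the binomial coefficients produced by the Leibniz rule on cubic expressions, keep the signs from the anticommutation of the odd fields $G^\pm,Q^\pm$ straight, and verify that every term in the quantum correction not proportional to $A_{n+2,0}$ (respectively $B_{n+2,0}$) can indeed be written in terms of the previously accepted strong generators $J$, $T$, $U_{0,0},\dots,U_{n+1,0}$, $V_{0,0},V_{1,0},V_{2,0}$, $A_{0,0},\dots,A_{n+1,0}$, $B_{0,0},\dots,B_{n+1,0}$, $Q^\pm$ and their derivatives. No fundamentally new idea is required beyond the technique already used in Lemma~\ref{LEM: - decoupling relations 1}; the present lemma merely extends it to the super setting, where the mixing of even $J^\pm$ with odd $G^\pm,Q^\pm$ through the OPEs \eqref{OPEs} dictates the particular combination of cubic terms that must appear on the right-hand side.
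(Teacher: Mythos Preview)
Your approach is correct and is essentially the same as the paper's: the paper itself omits the proof, stating only that ``the proofs of the following two Lemmas are straightforward computations'' using the definition of normal ordering and the commutation relations of $\mathfrak{n}_4$, which is precisely the quasi-associativity/OPE computation you outline. Your write-up is in fact more detailed than what the paper provides; the only caveat is that your proposed automorphism ``$J^+\leftrightarrow J^-$, $G^\pm\leftrightarrow Q^\mp$'' is not the involution $\theta$ the paper later uses (there $\theta(G^+)=G^-$), so if you want to invoke symmetry for the $B$-identity you should verify directly that your map preserves the OPEs \eqref{OPEs}, or simply rely on the direct computation you already sketched.
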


The last Lemma shows the existence of decoupling relations for the fields in the set $\{A_{i,0},B_{i,0}\}_{i=3}^{\infty}$. As before it can be checked directly that there are no decoupling relations for the remaining strong generators. Hence, this yields the following. 

\begin{thm}
\label{THM: Minimal strong generators of U(1)-orbifold of V^k(n_4)}
	For any level $k\neq-2,0$ the vertex algebra $V^k(\mathfrak{n}_4)^{U(1)}$ is of type $\mathcal{W}(1,2^2,3^2,4^2,5; \frac{3}{2}^2, \frac{5}{2}^2, \frac{7}{2}^2,\frac{9}{2}^2)$. A set of minimal strong generators is given by \newline $\{J,Q^{\pm},T,U_{i,0},A_{i,0},B_{i,0},V_{i,0}\}_{i=0}^2$.
\end{thm}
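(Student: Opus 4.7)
The plan is to lift a $\partial$-ring generating set of $\mathrm{gr}(\mathcal W)^{U(1)}$ to a strong generating set of $V^k(\mathfrak n_4)^{U(1)}$, then to kill as many of these generators as possible via decoupling relations, and finally to verify minimality. The weak filtration on $V^k(\mathfrak n_4)$ and the subsequent good filtration on $\mathcal W=\mathrm{gr}(V^k(\mathfrak n_4))$ are $U(1)$-equivariant, so
\[
\mathrm{gr}\bigl(V^k(\mathfrak n_4)^{U(1)}\bigr)\cong \mathcal W^{U(1)},\qquad \mathrm{gr}\bigl(\mathcal W^{U(1)}\bigr)\cong \mathrm{gr}(\mathcal W)^{U(1)}.
\]
Since $\mathrm{gr}(\mathcal W)$ is the differential polynomial superalgebra on $J,J^\pm,T,G^\pm,Q^\pm$, a classical first-fundamental-theorem argument shows that $\mathrm{gr}(\mathcal W)^{U(1)}$ is generated as a $\partial$-ring by the charge-zero elements $J,T,Q^\pm$ together with the charge-cancelling quadratics $u_{n,0},v_{n,0},a_{n,0},b_{n,0}$ for $n\ge 0$. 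Lifting via Lemma \ref{THM Lian-Linshaw} (and its weak-filtration analogue from \cite{2015arXiv151109143A}) then yields the strong generating set $\{J,T,Q^\pm,U_{n,0},V_{n,0},A_{n,0},B_{n,0}\mid n\ge 0\}$.

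The even high-weight generators are trimmed in two steps. For the $U_{n,0}$ I would invoke Proposition \ref{PROP: V^U(1) - version 2} inside the affine subalgebra $V^k(\mathfrak{sl}_2)^{U(1)}\subset V^k(\mathfrak n_4)^{U(1)}$, which for $k\neq 0$ gives decoupling relations expressing $U_{n,0}$ with $n\ge 4$ in terms of $\{J,U_{0,0},U_{1,0},U_{2,0},U_{3,0}\}$ and their derivatives. For the $V_{n,0}$, the first Lemma stated above gives
\[
\tfrac{k(6+n)}{3(3+n)}V_{n+3,0}=\,:V_{0,0}V_{n,0}:+(\text{lower order}),
\]
so all $V_{n,0}$ with $n\ge 3$ decouple for $k\neq 0$; the further restriction $k\neq -2$ arising in Proposition \ref{U(1)-orbifold of N=2 vertex algebra} is automatic because $V_{0,0},V_{1,0},V_{2,0}$ sit inside the $V^k(\mathfrak n_2)^{U(1)}$-subalgebra.

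For the odd families I would apply the second Lemma: its identity
\[
\tfrac{nk}{2(n+2)}A_{n+2,0}+P_{n+9/2}=\,:U_{n,0}A_{0,0}:-\,:U_{0,0}A_{n,0}:+\cdots,
\]
and the analogous one for $B$, produce, for $n\ge 1$ and $k\neq 0$, decoupling relations for every $A_{m,0}$ and $B_{m,0}$ with $m\ge 3$. There remains one extra even generator, $U_{3,0}$, which is needed in $V^k(\mathfrak{sl}_2)^{U(1)}$ but becomes redundant here because the richer OPE algebra supplies an additional quadratic relation: one writes a suitable combination of products of $U_{i,j}$'s and bilinears $:A_{i,0}B_{j,0}:$ and $:Q^+Q^-:$ whose classical limit vanishes, and whose quantum correction has nonzero coefficient at $U_{3,0}$ whenever $k\neq 0$. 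This is the explicit identity recorded in the appendix.

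Minimality then follows as in Propositions \ref{PROP: V^U(1) - version 2} and \ref{U(1)-orbifold of N=2 vertex algebra}: the images of $\{J,T,Q^\pm,U_{i,0},V_{i,0},A_{i,0},B_{i,0}\}_{i=0}^{2}$ in $\mathrm{gr}(\mathcal W)^{U(1)}$ form part of a minimal $\partial$-ring generating set, so none of them can be rewritten as a normally ordered polynomial in generators of strictly lower weight. The main technical obstacle is isolating the additional decoupling relation for $U_{3,0}$ and verifying that the coefficient of $U_{3,0}$ in it is a nonvanishing rational function of $k$ on $k\neq 0,-2$; once that computation is in hand, the rest is bookkeeping in conformal weight and collecting the decoupling identities already produced.
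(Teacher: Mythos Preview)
Your proposal is correct and follows essentially the same route as the paper: lift a $\partial$-ring generating set via the two filtrations, invoke Proposition~\ref{PROP: V^U(1) - version 2} and the two preparatory Lemmas to decouple $U_{n,0}$ ($n\ge 4$), $V_{n,0}$ ($n\ge 3$), $A_{n,0},B_{n,0}$ ($n\ge 3$), then use the explicit appendix identity \eqref{Appendix: Decoupling relation for U_{3,0}} to remove $U_{3,0}$, and finally check minimality by noting that no relations among the remaining generators exist in the associated graded at the relevant weights. The only cosmetic difference is that the paper does not argue that the $k\neq -2$ restriction is ``automatic'' from Proposition~\ref{U(1)-orbifold of N=2 vertex algebra}; it simply carries this hypothesis in the statement.
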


%--------------------------------------------------------------------------------------------------------%
\section{Construction of the cyclic orbifold}\label{sec:Construction of the cyclic orbifold}
%--------------------------------------------------------------------------------------------------------%

Let $\mathcal{U}(\mathfrak{n}_4)^{G}$ be the universal enveloping algebra of $\mathfrak{n}_4$ that is invariant under the group $G$. From here onwards we will restrict the group to be cyclic $G\cong\mathbb{Z}/N\mathbb{Z}$. As shown in section \ref{sec: Kac-Radul isomorphism}, there are isomorphisms \[\bigoplus_{i=0}^{N-1}\mathbb{C}_{i}\otimes W_{i}\cong V^k(\mathfrak{n}_4)\cong\bigoplus_{i=-\infty}^{\infty}\mathbb{C}_{i}\otimes V_{i}\] as a $(\mathbb{Z}/N\mathbb{Z},\mathcal{U}(\mathfrak{n}_4)^{\mathbb{Z}/N\mathbb{Z}})$-module and as a $(U(1),\mathcal{U}(\mathfrak{n}_4)^{U(1)})$-module, respectively. By restriction this leads to an isomorphism \[W_a\cong \bigoplus_{k=-\infty}^{\infty}V_{kN+a}\] for $a=0,\dots,N-1$ as a $\mathcal{U}(\mathfrak{n}_4)^{\mathbb{Z}/N\mathbb{Z}}$-module. 

It will be convenient to define the following fields \[Y(\sigma^{(i)\pm}_{a_1,\dots,a_{jN}},z)=\Sigma^{(i)\pm}_{a_1,\dots,a_{jN}} \stackrel{\mathclap{\normalfont\mbox{\scriptsize{def}}}}{=} :\partial^{a_1}J^{\pm}\cdots\partial^{a_{jN-i}}J^{\pm}\partial^{a_{jN-i+1}}G^{\pm}\cdots\partial^{a_{jN}}G^{\pm}:\] for $i=0,\dots,jN$ and $a_i\in\mathbb{N}_0$.
\begin{lem}
\label{LEM: strong generators of cyclic orbifold of V^k(n_4)}
	The vertex algebra $V^k(\mathfrak{n}_4)^{\mathbb{Z}/N\mathbb{Z}}$ is strongly generated by the strong generators of $V^k(\mathfrak{n}_4)^{U(1)}$ and the fields $\Sigma^{(i)\pm}_{a_1,\dots,a_N}$ for $i=0,\dots,N$ and $a_1,\dots,a_N\geq0$.
\end{lem}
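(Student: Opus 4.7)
The plan is to run the same two-step associated-graded machinery that was used to establish Theorem \ref{THM: Minimal strong generators of U(1)-orbifold of V^k(n_4)}, applied now to the subgroup $\mathbb{Z}/N\mathbb{Z} \subset U(1)$. Recall that $V^k(\mathfrak{n}_4)$ carries a weak increasing filtration by the total number of fermionic fields $G^\pm, Q^\pm$ and their derivatives, and that the associated graded $\mathcal{W}$ carries a good increasing filtration by total monomial length whose further associated graded $\text{gr}(\mathcal{W})$ is the supercommutative differential polynomial superring on $J, J^\pm, T, G^\pm, Q^\pm$. Both filtrations are preserved by the full $U(1)$-action, hence by the restricted $\mathbb{Z}/N\mathbb{Z}$-action, so taking invariants commutes with passing to the associated graded at each stage. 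By Lemma \ref{THM Lian-Linshaw} and the analogous reconstruction statement for weak filtrations from \cite{2015arXiv151109143A}, it therefore suffices to exhibit a $\partial$-ring generating set for $\text{gr}(\mathcal{W})^{\mathbb{Z}/N\mathbb{Z}}$ consisting of the symbols of the proposed fields.

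Next I would exploit the $U(1)$-charge grading on $\text{gr}(\mathcal{W})$: the fields $J^+, G^+$ carry charge $+1$, $J^-, G^-$ carry charge $-1$, and $J, T, Q^\pm$ are neutral. The $\mathbb{Z}/N\mathbb{Z}$-fixed subring decomposes as the direct sum over $j \in \mathbb{Z}$ of weight spaces of charge $jN$. Its charge-$0$ component is precisely $\text{gr}(\mathcal{W})^{U(1)}$, for which the strong generators of $V^k(\mathfrak{n}_4)^{U(1)}$ listed in Theorem \ref{THM: Minimal strong generators of U(1)-orbifold of V^k(n_4)} already provide a $\partial$-ring generating set (this is the graded content of that theorem). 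For $j \geq 1$, any monic charge-$jN$ monomial in the polynomial superring can be factored as a monomial of charge $N$ purely in derivatives of $J^+, G^+$ times a monomial of charge $(j-1)N$; the latter is handled by induction, and the former is exactly one of the symbols $\sigma^{(i)+}_{a_1,\ldots,a_N}$. The case $j \leq -1$ is symmetric, using $J^-, G^-$ and $\sigma^{(i)-}_{a_1,\ldots,a_N}$. Consequently, $\text{gr}(\mathcal{W})^{\mathbb{Z}/N\mathbb{Z}}$ is generated \emph{as a ring}, and a fortiori as a $\partial$-ring, by the $U(1)$-invariant generators together with the symbols $\sigma^{(i)\pm}_{a_1,\ldots,a_N}$ for $i = 0, \ldots, N$ and $a_1, \ldots, a_N \geq 0$.

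Finally, lifting this $\partial$-ring generating set via the two reconstruction lemmas yields the claimed strong generating set for $V^k(\mathfrak{n}_4)^{\mathbb{Z}/N\mathbb{Z}}$: the generators listed in Theorem \ref{THM: Minimal strong generators of U(1)-orbifold of V^k(n_4)} lift the charge-$0$ part, and the fields $\Sigma^{(i)\pm}_{a_1,\ldots,a_N}$ lift the charge-$(\pm N)$ part. The only subtlety to monitor is that the factorization of higher-charge monomials into charge-$(\pm N)$ pieces is carried out in the (super)commutative associated graded rather than in $V^k(\mathfrak{n}_4)$ itself, but this is exactly the purpose of the reconstruction lemmas; no explicit normally-ordered-product identities or decoupling computations are needed beyond those already established in the $U(1)$ case. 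I therefore do not expect a genuine obstacle: the argument is essentially a bookkeeping extension of the $U(1)$ proof, with the sole conceptual input being the elementary polynomial-ring observation that the $\mathbb{Z}/N\mathbb{Z}$-invariants are generated over the $U(1)$-invariants by charge-$(\pm N)$ monomials.
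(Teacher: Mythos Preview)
Your argument is correct and takes a genuinely different route from the paper's proof. The paper works directly at the vertex algebra level: it observes that the fields $J^+, G^+$ and their derivatives have mutually \emph{regular} OPEs (all charge-$+1$ generators commute in the vertex-algebra sense), so normally ordered products of the $\Sigma^{(i)+}_{a_1,\dots,a_N}$ already produce every $\Sigma^{(i)+}_{a_1,\dots,a_{jN}}$, whence $V_0 \oplus V_+$ is strongly generated by $V_0$ together with the charge-$N$ fields; the same holds for $V_-$, and OPEs between $V_N$ and $V_{-N}$ land in $V_0$ by charge considerations. Your approach instead pushes everything to the supercommutative associated graded $\text{gr}(\mathcal{W})$, where the factorization of charge-$jN$ monomials into charge-$(\pm N)$ pieces times charge-$0$ pieces is elementary polynomial algebra, and then lifts via the reconstruction lemmas. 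The paper's argument is shorter but hinges on the special feature that same-sign charged generators have regular OPEs; your argument is more uniform with the treatment of the $U(1)$ orbifold and would survive even if those OPEs were nontrivial.

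One small inaccuracy worth fixing: you cite Theorem~\ref{THM: Minimal strong generators of U(1)-orbifold of V^k(n_4)} as providing a $\partial$-ring generating set for $\text{gr}(\mathcal{W})^{U(1)}$. That is not quite right: in the associated graded one needs the \emph{infinite} list $\{J, T, Q^\pm, u_{n,0}, v_{n,0}, a_{n,0}, b_{n,0}\}_{n\geq 0}$; the reduction to the finite set in that theorem comes from decoupling relations which live in $V^k(\mathfrak{n}_4)^{U(1)}$ and are invisible in $\text{gr}(\mathcal{W})^{U(1)}$. This does not damage your proof of the present lemma, since the statement only asks for ``the strong generators of $V^k(\mathfrak{n}_4)^{U(1)}$'' without specifying a minimal set, and the infinite $\partial$-ring generators lift to strong generators of the $U(1)$-orbifold (which can then be shrunk via decoupling if desired). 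Just adjust the reference to point to the paragraph establishing the infinite strong generating set rather than to the minimality theorem.
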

\begin{proof}
	Let $V_{\pm} = \bigoplus_{j\in\mathbb{N}} V_{\pm jN}$ and let $i=0,\dots,N$. Each vector space $V_{jN}$ for $j\in\mathbb{Z}\backslash\{0\}$ is a $\mathcal{U}(\mathfrak{n}_4)^{U(1)}$-module and generated by the vectors in the set $\{\sigma^{(i)+}_{a_1,\dots,a_{jN}}\}_{a_1,\dots,a_{jN}=0}^{\infty}$. Note that $V_0 \oplus V_{+}$ is a vertex subalgebra of $V^k(\mathfrak{n}_4)^{\mathbb{Z}/N\mathbb{Z}}$. Due to commutativity of the fields in the set $\{\Sigma^{(i)+}_{a_1,\dots,a_N}\}_{i=0}^N$, $V_0 \oplus V_{+}$ is strongly generated by these fields and the strong generators of $V_0$. By the same argument the vertex subalgebra $V_0 \oplus V_{-}$ is strongly generated by the fields in the set $\{\Sigma^{(i)-}_{a_1,\dots,a_N}\}_{i=0}^{N}$ and the strong generators of $V_0$. Observe that the fields that appear in the OPEs between the strong generators in $V_{N}$ and $V_{-N}$ are necessarily elements of $V_0$. This proves the proposition. 
\end{proof}

\begin{rem} If $k$ is generic, $V^k(\mathfrak{n}_4)$ is simple, and hence $V^k(\mathfrak{n}_4)^{\mathbb{Z}/2\mathbb{Z}}$ and $V^k(\mathfrak{n}_4)^{U(1)}$ are both simple as well \cite{doi:10.1155/S1073792896000566}. It follows from Theorem \ref{thm:kacradul} that each of the $V^k(\mathfrak{n}_4)^{U(1)}$-modules $V_{i}$ is irreducible. In particular, the $V^k(\mathfrak{n}_4)^{U(1)}$-submodule of $V_N$ generated by $\Sigma^{(0)+}_{0,\dots,0} = \ :(J^+)^N:$, must be all of $V_N$. Similarly, $V_{-N}$ is generated by $\Sigma^{(0)-}_{0,\dots,0} = \ :(J^+)^N:$ over $V^k(\mathfrak{n}_4)^{U(1)}$. This implies that $V^k(\mathfrak{n}_4)^{\mathbb{Z}/2\mathbb{Z}}$ is generated (but not strongly) by $:(J^+)^N:$  and $:(J^-)^N:$ together with $V^k(\mathfrak{n}_4)^{U(1)}$. \end{rem}

In order to construct a more economical strong generating set for $V^k(\mathfrak{n}_4)^{\mathbb{Z}/2\mathbb{Z}}$, we first observe that some of the fields $\Sigma^{(i)+}_{a_1,\dots,a_{jN}}$ are superfluous as strong generators due to the action of the translation operator $\partial$. The generating function of the fields $\Sigma^{(l)\pm}_{a_1,\dots,a_N}$ for fixed $l$ with respect to weight will show which of these can be neglected as strong generators. It can be obtained by a simple counting argument. 
\begin{lem} 
\label{LEM: Generating function}
	The generating function for the number of fields $\Sigma^{(l)\pm}_{a_1,\dots,a_N}$ with respect to conformal weight is \[q^{N+\frac{l^2}{2}}\prod_{i=1}^{N-l}\frac{1}{1-q^i}\prod_{j=1}^{l}\frac{1}{1-q^j}.\]
\end{lem}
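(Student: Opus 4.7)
The plan is a direct bosonic--fermionic counting argument. First I would verify, from the OPE table~\eqref{OPEs}, that within each sign choice all relevant self-OPEs are regular: explicitly, $J^{\pm}(z)J^{\pm}(w)$, $G^{\pm}(z)G^{\pm}(w)$, and $J^{\pm}(z)G^{\pm}(w)$ have no singular terms. The first two are immediate from the list. The third follows from $J$-charge considerations, since $V^k(\mathfrak{n}_4)$ contains no field of $J$-charge $\pm 3$ and conformal weight at most $\frac{3}{2}$.

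Next I would invoke the standard reordering identity for normally ordered products, namely that $:AB:$ and $(-1)^{|A||B|}:BA:$ coincide whenever $A$ and $B$ have trivial OPE. Applied to the factors appearing in $\Sigma^{(l)\pm}_{a_1,\dots,a_N}$, this implies the field is totally symmetric in its $N-l$ bosonic slots (occupied by $\partial^{a_i}J^{\pm}$) and totally antisymmetric in its $l$ fermionic slots (occupied by $\partial^{a_j}G^{\pm}$), while bosonic and fermionic slots commute with one another. In particular, any two coincident exponents in the fermionic block force the field to vanish.

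The enumeration of nonzero fields therefore reduces to choosing an unordered multiset $\{a_1,\dots,a_{N-l}\}\subset\mathbb{N}_0$ of exponents on the $J^{\pm}$'s together with an unordered strict subset $\{b_1<\cdots<b_l\}\subset\mathbb{N}_0$ of exponents on the $G^{\pm}$'s. Since $J^{\pm}$ and $G^{\pm}$ have conformal weights $1$ and $\frac{3}{2}$ respectively, the total conformal weight of the corresponding field is $(N-l)+\frac{3l}{2}+\sum_{i=1}^{N-l}a_i+\sum_{j=1}^{l}b_j$.

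Finally I would assemble the two relevant partition generating functions. Multisets of size $N-l$ from $\mathbb{N}_0$ graded by sum are enumerated by $\prod_{i=1}^{N-l}(1-q^i)^{-1}$, while strict subsets of size $l$, via the shift $c_j=b_j-(j-1)$, biject with multisets of size $l$ and therefore contribute $q^{l(l-1)/2}\prod_{j=1}^{l}(1-q^j)^{-1}$. Multiplying by the prefactor $q^{(N-l)+3l/2}$ coming from the intrinsic conformal weights of the generators and simplifying the exponent via $(N-l)+\frac{3l}{2}+\frac{l(l-1)}{2}=N+\frac{l^2}{2}$ produces the claimed formula. There is no serious obstacle; the only point requiring care is the verification of the vanishing self-OPEs, which is what guarantees the full (anti)symmetry of the normally ordered products.
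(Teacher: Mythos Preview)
Your proposal is correct and follows essentially the same bosonic--fermionic counting argument as the paper's own proof. The paper phrases the fermionic count as partitions of $b$ into exactly $l$ or exactly $l-1$ distinct positive parts (depending on whether one exponent is zero), whereas you use the equivalent shift bijection $c_j=b_j-(j-1)$; both lead to the factor $q^{l(l-1)/2}\prod_{j=1}^{l}(1-q^j)^{-1}$, and your explicit verification that the relevant self-OPEs are regular is a welcome addition to the paper's terse appeal to (anti-)commutativity.
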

\begin{proof}
	The weight of the field $\Sigma^{(l)\pm}_{a_1,\dots,a_N}$ equals $N+\frac{1}{2}l+a+b$ where $a=\sum_{i=1}^{N-l}a_i$ and $b=\sum_{i=N-l+1}^Na_i$. Due to (anti-)commutativity of the fields in the set $\{\partial^iJ^+,\partial^iG^+\}_{i=0}^{\infty}$ the number of fields at a given weight is determined by the number of partitions of $a$ with at most $N-l$ parts and the number of partitions of $b$ with exactly $l$ and with exactly $l-1$ parts such that all summands are distinct in both cases. The latter condition exists since at most one of the coefficients in the set $\{a_{N-i+1},\dots,a_N\}$ can be zero, otherwise $\Sigma^{(i)\pm}_{a_1,\dots,a_N}$ vanishes.
\end{proof}
Let $V_{(l)}$ be the subspace of $V^k(\mathfrak{n}_4)$ spanned by the vectors in the set $\{\sigma^{(l)\pm}_{a_1,\dots,a_N}\}_{a_1,\dots,a_N=0}^{\infty}$. It is obvious that the translation operator $\partial$ acts on $V_{(l)}$ and increases the conformal weight by 1. Since we are looking for strong generators, we can eliminate any elements in the space $\partial V_{(l)}$. Dropping vectors in the set $\{\sigma^{(l)\pm}_{a_1,\dots,a_N}\}_{a_1,\dots,a_N=0}^{\infty}$ which decouple for this reason amounts to multiplying the generating function from Lemma \ref{LEM: Generating function} by $(1-q)$. 

Next, it follows from Theorem 4.10 \cite{ACKL17} that for generic $k$, only {\it finitely many} of the fields $\Sigma^{(i)\pm}_{a_1,\dots,a_N}$ are needed to strongly generate $V^k(\mathfrak{n}_4)^{\mathbb{Z}/2\mathbb{Z}}$. This ultimately boils down to the fact that each of the modules $V_i$ is $C_1$-cofinite as a module over $V^k(\mathfrak{n}_4)^{U(1)}$; a proof can be given along the lines of Theorems 6.1 and 6.3 of \cite{CKLR}.

It is a difficult problem to determine the {\it minimal} set of fields of the form $\Sigma^{(i)\pm}_{a_1,\dots,a_N}$ that are needed to strongly $V^k(\mathfrak{n}_4)^{\mathbb{Z}/2\mathbb{Z}}$ for all $N$, but we shall carry this out in the case $N=2$. First, multiplying the generating function from the previous Lemma by $(1-q)$ yields
\begin{equation*}
	l=0:\quad \sum_{i=0}^{\infty}q^{2n+2}, \qquad l=1:\quad \sum_{i=0}^{\infty}q^{n+\frac{5}{2}} , \qquad l=2:\quad \sum_{i=0}^{\infty}q^{2n+4}.
\end{equation*}
Thus, Lemma \ref{LEM: strong generators of cyclic orbifold of V^k(n_4)} can be improved upon since a set of strong generators of $V^k(\mathfrak{n}_4)^{\mathbb{Z}/2\mathbb{Z}}$ is given by the strong generators of $V^{k}(\mathfrak{n}_4)^{U(1)}$ together with the fields in the set \newline $\{\Sigma^{(0)\pm}_{2n,0},\Sigma^{(1)\pm}_{n,0},\Sigma^{(2)\pm}_{2n+1,0}\}_{n=0}^{\infty}$. The following Lemma will set up the proof of Theorem \ref{THM: Main Theorem. minimal strong generators of cyclic orbifold Z/2Z}.
\begin{lem}
\label{LEM: Decoupling relations for the strong generators of the cyclic orbifold that are not in the U(1) orbifold.}
	Let $n\in\mathbb{N}$ and $c^{(i)}_n,d^{(i)}_n\in\mathbb{Q}$ for $i=0,1,2$.
	\begin{align*}
		:U_{2n,0}\Sigma^{(0)+}_{0,0}-U_{0,0}\Sigma^{(0)+}_{2n,0}:     				  &= p^{(0)}_n(k) \hspace{2pt} \Sigma^{(0)+}_{2n+2,0} + \sum_{i=0}^{2n}c^{(0)}_i:\partial^iH\Sigma^{(0)+}_{2n-i,0}: \\
		:U_{2n-1,0}\Sigma^{(0)+}_{1,0}-U_{0,0}\Sigma^{(0)+}_{2n-1,1}: 				  &= q^{(0)}_n(k) \hspace{2pt} \Sigma^{(0)+}_{2n+2,0} + \sum_{i=0}^{2n}d^{(0)}_i:\partial^iH\Sigma^{(0)+}_{2n-i,0}: \\
		:\Sigma^{(0)+}_{n,0}B_{0,0}-U_{n,0}\Sigma^{(1)+}_{0,0}+\Sigma^{(0)+}_{n+1,0}Q^+: &= p^{(1)}_n(k) \hspace{2pt} \Sigma^{(1)+}_{n+2,0} \hspace{4pt}		 + \sum_{i=0}^{n+1}c^{(1)}_i:\partial^iH\Sigma^{(1)+}_{n+1-i,0}:\\	
		:\Sigma^{(0)+}_{n,0}B_{0,0}-U_{0,0}\Sigma^{(1)+}_{n,0}+\Sigma^{(0)+}_{n,1}Q^+:	  &= q^{(1)}_n(k) \hspace{2pt} \Sigma^{(1)+}_{n+2,0} \hspace{4pt} + \sum_{i=0}^{n+1}d^{(1)}_i:\partial^iH\Sigma^{(1)+}_{n+1-i,0}:\\
		:B_{0,0}\Sigma^{(1)+}_{2n-1,0} - \Sigma^{(1)+}_{2n-1,1}Q^+:	  				  &= p^{(2)}_n(k) \hspace{2pt} \Sigma^{(2)+}_{2n+1,0} + \sum_{i=0}^{2n}c^{(2)}_i:\partial^iH\Sigma^{(2)+}_{2n-i,0}:\\
		\end{align*}
	\begin{align*}
		:B_{2n-1,0}\Sigma^{(1)+}_{0,0} +\frac{1}{2n}\Sigma^{(1)+}_{0,2n}Q^+:  		  &= q^{(2)}_n(k) \hspace{2pt} \Sigma^{(2)+}_{2n+1,0} + \sum_{i=0}^{2n}d^{(2)}_i:\partial^iH\Sigma^{(2)+}_{2n-i,0}:
	\end{align*}
	\scriptsize
	\begin{align*}
		p^{(0)}_n(k) &= -\frac{8n}{2n+1}+\frac{n}{2n+2}k 
	  	&q^{(0)}_n(k) =& \frac{4}{3}+\frac{7+2n}{6(2n+1)}k \\
	  	p^{(1)}_n(k) &= \frac{2}{n+2}\left[\left(n-\frac{1+(-1)^n}{n+1}\right)+(-1)^n\frac{k}{2}\right]
	  	&q^{(1)}_n(k) =& \frac{1}{n+2}\left[\frac{(-1)}{n+1}[4n+5+(-1)^{n+1}(2n+1)]+\frac{k}{2}[n+2(-1)^n]\right] \\
	  	p^{(2)}_n(k) &= \frac{2+k}{2n+1}
	  	&q^{(2)}_n(k) =& \frac{2-k}{2n+1}
	\end{align*}
\normalsize
\end{lem}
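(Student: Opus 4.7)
The plan is to prove each of the six identities by a direct Wick-calculus computation, extracting the coefficient of the ``leading'' field $\Sigma^{(i)+}_{\text{max},0}$ and absorbing everything else into the unspecified sums $\sum c^{(i)}_j :\partial^j H\, \Sigma^{(i)+}_{\cdot,\cdot}:$. Since the right-hand sides only specify the leading coefficient, the essential task is to identify exactly which contraction pattern produces that leading $\Sigma$ and to sum its combinatorial contributions to a closed form in $n$ and $k$.

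First, I would fix notation and invoke the two basic OPEs that drive everything: $J^+(z)J^-(w) \sim \frac{k}{(z-w)^2} + \frac{J(w)}{z-w}$, together with $J(z)J^{\pm}(w) \sim \pm \frac{2J^{\pm}(w)}{z-w}$ and the OPEs between $J, J^{\pm}$ and $G^{\pm}, Q^{\pm}$ listed in \eqref{OPEs}. For each identity the left-hand side is a normally ordered product of two composite fields; I would expand it using the standard rearrangement formula $:AB:\,_{(n)}C = A_{(-1)}(B_{(n)}C) + \sum_{j\geq 0}\binom{-1}{j}(-1)^j B_{(n-1-j)}A_{(j)}C$ (or equivalently the ``quasi-associativity'' identity) to move one factor past the other, and then iterate until only contractions of $J^-$ (or $J$) against the $J^+$'s and $G^+$'s inside a $\Sigma$ field remain.

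The key observation is a charge/weight filtration: the right-hand side must lie in the $U(1)$-weight-equal-to-$N\cdot\text{charge}$ subspace of the appropriate conformal weight, and among the strong generators produced by Lemma \ref{LEM: strong generators of cyclic orbifold of V^k(n_4)}, the maximal-length $\Sigma^{(i)+}_{\text{max},0}$ arises uniquely from the contraction pattern that eliminates every $J^-$ (resp. $J$) via its double pole with $k$ and its simple pole with $J$. The simple-pole contractions produce a factor of $J = H$ outside the $\Sigma$-monomial, hence contribute only to the error sum $\sum c^{(i)}_j :\partial^j H\, \Sigma:$; the double-pole contractions preserve the number of $J^+$/$G^+$ factors and therefore produce a genuine $\Sigma^{(i)+}_{\text{max},0}$, weighted by a binomial factor coming from $\partial^a J^-(z)\partial^b J^+(w) \sim \frac{k(-1)^a(a+b+1)!/((a+b+2)(z-w)^{a+b+2})}{\cdot}$-type residues. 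After extracting these residues and summing the resulting binomial sum, one reads off $p^{(i)}_n(k)$ or $q^{(i)}_n(k)$.

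The main obstacle is the bookkeeping for the identities involving the odd fields $Q^+, G^+$: in the $i=1,2$ cases, anticommutation signs and the parity of $n$ interact nontrivially, which is precisely why $p^{(1)}_n$ and $q^{(1)}_n$ carry $(-1)^n$ factors; one has to combine two cancellations, one from the double-pole of $J^+J^-$ and one from the contraction of $Q^+$ (or $B_{0,0}$) against a $G^+$ via the OPE $Q^+(z)G^+(w)\sim \frac{2J^+(w)}{(z-w)^2} + \frac{\partial J^+(w)}{z-w}$, whose residues must be resummed over the derivative index. In each of the six lines the resulting sum is elementary (of the form $\sum_{j=0}^{m}(-1)^j\binom{m}{j}\frac{1}{j+c}$ or similar), and collapses to the stated rational function of $n$ and $k$; verifying this collapse is the only delicate calculation, while the rest is the routine Wick-calculus bookkeeping already illustrated in the proofs of Lemmas \ref{LEM: - decoupling relations 1} and the $V_{n+3,0}$ decoupling identity.
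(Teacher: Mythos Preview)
Your proposal is correct and takes essentially the same approach as the paper, which simply states that computing the left-hand side of each identity is straightforward using only the definition of normal ordering and the commutation relations of $\mathfrak{n}_4$ (equivalently the OPEs in \eqref{OPEs}). Your outline is in fact considerably more detailed than the paper's own proof; one small slip is the OPE you quote as $Q^+(z)G^+(w)\sim \frac{2J^+(w)}{(z-w)^2}+\cdots$, which in the paper's conventions is actually $G^+(z)Q^-(w)$ (i.e.\ $G^{+,2}(z)G^{+,1}(w)$), but this does not affect the validity of your scheme.
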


\begin{proof}
	Computing the left hand side of each of these equations is straightforward and leads directly to the right hand side by using only the definition of normal ordering and the commutation relations of $\mathfrak{n}_4$ which are equivalent to the OPEs as stated in (\ref{OPEs}).
\end{proof}

\begin{thm}
\label{THM: Main Theorem. minimal strong generators of cyclic orbifold Z/2Z}
	Let $k\neq -2,0,4,16$. The vertex algebra $V^k(\mathfrak{n}_4)^{\mathbb{Z}/2\mathbb{Z}}$ is of type $\mathcal{W}(1,2^4,3^2,4^5; \frac{3}{2}^2, \frac{5}{2}^4, \frac{7}{2}^4)$. A set of minimal strong generators is \[\mathcal{S}=\{H,Q^{\pm},T,U_{i,0},A_{i,0},B_{i,0},V_{i,0},\Sigma^{(i)\pm}_{0,0},\Sigma^{(0)\pm}_{2,0},\Sigma^{(1)\pm}_{1,0},\Sigma^{(2)\pm}_{1,0}\}_{i=0}^1.\] At level $k=4$ a set of minimal strong generators is $\mathcal{S}\cup\{\Sigma^{(1)\pm}_{2,0}\}$. At level $k=16$ a set of minimal strong generators is $\mathcal{S}\cup\{U_{2,0},A_{2,0},B_{2,0}\}$.
\end{thm}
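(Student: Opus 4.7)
The approach is to refine the strong generating set for $V^k(\mathfrak{n}_4)^{\mathbb{Z}/2\mathbb{Z}}$ produced by Lemma~\ref{LEM: strong generators of cyclic orbifold of V^k(n_4)} and the partition-counting reduction following Lemma~\ref{LEM: Generating function}. Specialized to $N=2$, those results already leave the $U(1)$-generators from Theorem~\ref{THM: Minimal strong generators of U(1)-orbifold of V^k(n_4)} together with the three families $\{\Sigma^{(0)\pm}_{2n,0}\}_{n\geq 0}$, $\{\Sigma^{(1)\pm}_{n,0}\}_{n\geq 0}$, and $\{\Sigma^{(2)\pm}_{2n+1,0}\}_{n\geq 0}$. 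The plan is to proceed in two decoupling passes and close with a minimality check.

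In the first pass I would use the six identities of Lemma~\ref{LEM: Decoupling relations for the strong generators of the cyclic orbifold that are not in the U(1) orbifold.} to express, for each $n$ large enough, the fields $\Sigma^{(0)+}_{2n+2,0}$, $\Sigma^{(1)+}_{n+2,0}$, and $\Sigma^{(2)+}_{2n+1,0}$ (and their $-$ analogues, by the obvious sign symmetry of the OPEs in \eqref{OPEs}) as normally ordered polynomials in strong generators of strictly lower conformal weight. Either coefficient $p^{(i)}_n(k)$ or $q^{(i)}_n(k)$ being nonzero suffices to isolate the leading $\Sigma$-field, so one only needs to check when no linear combination of the two relations works. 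Direct inspection of the closed-form expressions shows that the pairs $(p^{(0)}_n,q^{(0)}_n)$ and $(p^{(2)}_n,q^{(2)}_n)$ never vanish simultaneously for any $k$, while for $i=1$ and $n=0$ both coefficients coincide with $(k-4)/2$. Thus for $k\neq 4$ the first pass eliminates every $\Sigma$-field except the eleven $\pm$-pairs appearing in $\mathcal{S}$, whereas at $k=4$ one must additionally retain $\Sigma^{(1)\pm}_{2,0}$.

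The second pass removes the $U(1)$-level generators $U_{2,0}$, $A_{2,0}$, $B_{2,0}$, and $V_{2,0}$, which are part of the minimal $U(1)$-orbifold generating set but should no longer appear for the $\mathbb{Z}/2\mathbb{Z}$-orbifold once the weight-$4$ fields $\Sigma^{(0)\pm}_{2,0}$, $\Sigma^{(1)\pm}_{1,0}$, $\Sigma^{(2)\pm}_{1,0}$ are available as replacements. I would produce explicit relations of the schematic form
\begin{align*}
:\Sigma^{(0)+}_{0,0}\Sigma^{(0)-}_{0,0}:\;-\;:U_{0,0}U_{0,0}:\ &=\ \alpha(k)\,U_{2,0} + (\text{lower}),\\
:\Sigma^{(0)-}_{0,0}\Sigma^{(1)+}_{0,0}:\;-\;:U_{0,0}B_{0,0}:\ &=\ \beta(k)\,B_{2,0} + (\text{lower}),
\end{align*}
together with analogues for $A_{2,0}$ and $V_{2,0}$, by expanding the indicated Wick products using only the OPEs of \eqref{OPEs} and standard normal-ordering identities. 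Both sides of each identity have the same classical symbol in $\mathrm{gr}(V^k(\mathfrak{n}_4))^{\mathbb{Z}/2\mathbb{Z}}$, so the difference sits in a strictly lower filtration piece and can be rewritten in the already-surviving generators and at most one of $U_{2,0},A_{2,0},B_{2,0},V_{2,0}$. I expect the explicit calculation to reveal that the three coefficients $\alpha(k)$, $\beta(k)$, and the analogous coefficient for $A_{2,0}$ each contain a simple factor of $(k-16)$, while the one for $V_{2,0}$ does not; this is the precise mechanism producing the exceptional level $k=16$.

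Finally, to verify minimality I would pass to the associated graded, which is isomorphic to the $\mathbb{Z}/2\mathbb{Z}$-invariants of the $\partial$-superalgebra freely generated by $J,J^\pm,T,G^\pm,Q^\pm$, and check that the classical symbols of the proposed generators are algebraically independent as $\partial$-ring generators in each relevant conformal weight. The main obstacle is the second pass: obtaining the correct decoupling relations for $U_{2,0}$, $A_{2,0}$, $B_{2,0}$, and $V_{2,0}$ is a genuinely nontrivial Wick-calculus computation, and confirming that $(k-16)$ is really the only obstruction---that is, that no alternative relation salvages decoupling at that level---requires exhibiting the relevant polynomials in $k$ explicitly and showing no hidden cancellation occurs.
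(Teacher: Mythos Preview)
Your two-pass strategy is exactly the paper's, and your reading of Lemma~\ref{LEM: Decoupling relations for the strong generators of the cyclic orbifold that are not in the U(1) orbifold.} (including the $n=0$ extension for $i=1$ yielding the common coefficient $(k-4)/2$) is correct.  Three places where the sketch needs tightening:

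\emph{The $-$ versions.}  There is no simple ``sign symmetry'' of the OPEs that turns $+$ relations into $-$ ones; the paper instead uses a concrete involution $\theta$ with $\theta(J^\pm)=J^\mp$, $\theta(J)=-J$, $\theta(G^+)=G^-$, and applies it to each $+$ decoupling relation to obtain its $-$ counterpart.

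\emph{The $V_{2,0}$ decoupling.}  A single relation of the shape you propose does not suffice: the natural relation built from $:\Sigma^{(1)+}_{0,0}\Sigma^{(1)-}_{0,0}:$ has leading coefficient $k(6-k)$, so by itself it would wrongly leave $k=6$ exceptional.  The paper produces a \emph{second} independent relation (involving $:\partial\Sigma^{(0)+}_{0,0}\Sigma^{(0)-}_{0,0}:$) with leading coefficient $\tfrac{k}{2}(32+3k)$, and it is the pair together that eliminates $V_{2,0}$ for all $k\neq 0$.  Your sketch should anticipate needing more than one relation here.

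\emph{Minimality at $k=4$ and $k=16$.}  Your associated-graded argument shows the members of $\mathcal{S}$ cannot decouple, but it cannot show that $\Sigma^{(1)\pm}_{2,0}$ resists decoupling at $k=4$ or that $U_{2,0},A_{2,0},B_{2,0}$ resist at $k=16$: these fields \emph{do} decouple generically via relations whose leading terms sit in higher filtration degree, so the obstruction is invisible in $\mathrm{gr}$.  One must exhaust the relevant weight spaces (as the paper does by computer) to rule out alternative relations at those two levels.
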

\begin{proof}
	By Lemma \ref{LEM: strong generators of cyclic orbifold of V^k(n_4)} and the discussion thereafter the vertex algebra $V^k(\mathfrak{n}_4)^{\mathbb{Z}/2\mathbb{Z}}$ is strongly generated by the strong generators of $V^k(\mathfrak{n}_4)^{U(1)}$ and the fields in the set $\{\Sigma^{(0)\pm}_{2n,0},\Sigma^{(1)\pm}_{n,0},\Sigma^{(2)\pm}_{2n+1,0}\}_{n=0}^{\infty}$. Observe in Lemma \ref{LEM: Decoupling relations for the strong generators of the cyclic orbifold that are not in the U(1) orbifold.} that for any $i\in\{0,1,2\}$ the roots of $p_n^{(i)}$ and $q_n^{(i)}$ are distinct for all $n\in\mathbb{N}$. It follows that the fields $\{\Sigma^{(0)+}_{2n+4,0},\Sigma^{(1)+}_{n+3,0},\Sigma^{(2)+}_{2n+3,0}\}_{n=0}^{\infty}$ decouple at any level $k$. Let $\theta\in Aut(V^k(\mathfrak{n}_4))$ such that it restricts to an involution on the strong generators of $V^k(\mathfrak{sl}_2)$ given by\[\theta(J^+)=J^-, \qquad\theta(J)=-J, \qquad\theta(J^-)=J^+\] and such that $\theta(G^{+})=G^{-}$. These requirements fix the action on the remaining strong generators. The map $\theta$ is an automorphism of $V^k(\mathfrak{n}_4)^{\mathbb{Z}/2\mathbb{Z}}$ \emph{a fortiori} and acting on the decoupling relations of Lemma \ref{LEM: Decoupling relations for the strong generators of the cyclic orbifold that are not in the U(1) orbifold.} shows that the fields $\{\Sigma^{(0)-}_{2n+4,0},\Sigma^{(1)-}_{n+3,0},\Sigma^{(2)-}_{2n+3,0}\}_{n=0}^{\infty}$ decouple at any level $k$ as well. The existence of decoupling relations for all remaining strong generators can be checked directly.\footnote{In our case this was done using Thielemans' Mathematica\texttrademark package \cite{doi:10.1142/S0129183191001001}.} The field $\Sigma^{(1)+}_{2,0}$ decouples at all levels $k\neq4$ (see (\ref{Appendix: Decoupling relation for Sigma^(1+)_(2,0)})). Acting on the decoupling relation of $\Sigma^{(1)+}_{2,0}$ with the automorphism $ \theta$ shows that $\Sigma^{(1)+}_{2,0}$ also decouples at all levels $k\neq4$. Furthermore, the fields $U_{2,0},A_{2,0}$ and $B_{2,0}$ decouple at all levels $k\neq 16$ (see (\ref{Appendix: Decoupling relation for U_(2,0)})-(\ref{Appendix: Decoupling relation for B_(2,0)})) and $V_{2,0}$ decouples at all levels $k\neq0$ (see (\ref{Appendix: Decoupling relation for V^(2,0) Part 1}) and (\ref{Appendix: Decoupling relation for V^(2,0) Part 2})). These exhaust all decoupling relations for the minimal strong generators which proves the Theorem. 
\end{proof}

%--------------------------------------------------------------------------------------------------------%
\section{Structure of the vertex algebras}\label{sec:Structure of the vertex algebra}
%--------------------------------------------------------------------------------------------------------%
We will now look at sub-structures and simple quotients of the two orbifolds of $V^k(\mathfrak{n}_4)$ that were constructed in the previous section. It will be helpful to define the following: Let $R^i\in V^k(\mathfrak{n}_4)$ be a field and define
\begin{equation*}
	C^i=(Q^+)_{(0)} R^i , \qquad D^i=-(Q^-)_{(0)} R^i , \qquad S^i=\frac{1}{2}\left((Q^-) _{(0)} C^i + (Q^+)_{(0)} D^i\right).
\end{equation*}
Note that the Lie superalgebra $\mathfrak{gl}_{1|1}$ has basis $\psi^\pm, N, E$ where $\psi^\pm$ are odd and $N, E$ are even and the non-zero commutation relations are $[N, \psi^\pm]= \pm \psi^\pm$ and $[\psi^+, \psi^-] =E$, so that the algebra spanned by $(Q^\pm)_{(0)}, J_{(0)}$ and $[(Q^+)_{(0)}, (Q^-)_{(0)}]$ is isomorphic to $\mathfrak{gl}_{1|1}$. The definition of $R^i, C^i, D^i$ and $S^i$ thus organizes fields into representations of this $\mathfrak{gl}_{1|1}$-algebra. 

 Taking $R^0=H$ and $R^{i+1}=U_{i,0}$ for $i=0,1,2$ it can be checked that the fields in the set $\mathcal{S}=\{\mathcal{S}^i\}_{i=0}^3$ strongly generate $V^k(\mathfrak{n}_4)^{U(1)}$ at all levels $k\neq0,-2$ (\textit{cf.} Theorem \ref{THM: Minimal strong generators of U(1)-orbifold of V^k(n_4)}). Furthermore, taking $R^3=\Sigma^{(0)+}_{0,0}$ and $R^4=\Sigma^{(1)+}_{1,0}$ as well as $R^{n+2}=\theta(R^n)$ for $n=3,4$ with $\theta$ being the automorphism defined in the proof of Theorem \ref{THM: Main Theorem. minimal strong generators of cyclic orbifold Z/2Z} we see that the set $\{\mathcal{S}^{i}\}_{i=0}^6$ contains all minimal strong generators of $V^k(\mathfrak{n}_4)^{\mathbb{Z}/2\mathbb{Z}}$ at levels $k\neq-2,0,4,16$ (cf. Theorem \ref{THM: Main Theorem. minimal strong generators of cyclic orbifold Z/2Z}).
\begin{thm}
\label{THM: simple quotients of U(1)-orbifold}
	Let $k\neq0,-2$. For all but finitely many levels $k$ the simple quotient $L_k(\mathfrak{n}_4)^{U(1)}$ is of type $\mathcal{W}(1,2^2,3^2,4^2,5; \frac{3}{2}^2 ,\frac{5}{2}^2 ,\frac{7}{2}^2,\frac{9}{2}^2)$. The full list of exceptions is stated in the following table.
	
	\begin{figure}[H]
		% \begin{table}[h]
		\footnotesize
		\makebox[\linewidth]{
		  \begin{tabular}{| r | c | l |}
		    \bottomrule
		    \hline
		    \multicolumn{1}{|c|}{level $k$} & \multicolumn{1}{|c|}{central charge} & \multicolumn{1}{|c|}{type}\\ 
		    \hline  
		    \toprule
		  	% \hline 
		  	$-\frac{5}{2}$ 	& $-15$ & $\mathcal{W}(1,2^2,3^2,4; \frac{3}{2}^2 ,\frac{5}{2}^2 ,\frac{7}{2}^2 )$ \\ 
		  	$-\frac{3}{2}$  & $ -9$ & $\mathcal{W}(1,2,4,5; \frac{3}{2}^2,\frac{9}{2}^2)$ \\ 
		  	$-\frac{4}{3}$ 	& $ -8$ & $\mathcal{W}(1,2^2,3^2,4; \frac{3}{2}^2, \frac{5}{2}^2,\frac{7}{2}^2 )$ \\ 
		  	$-\frac{2}{3}$  & $ -4$ & $\mathcal{W}(1,2^2,3^2,4^2; \frac{3}{2}^2, \frac{5}{2}^2, \frac{7}{2}^2,\frac{9}{2}^2)$ \\ 
		  	$-\frac{1}{2}$  & $ -3$ & $\mathcal{W}(1,2^2,3; \frac{3}{2}^2,\frac{5}{2}^2)$ \\ 
		  	$1$  			& $  6$ & $\mathcal{W}(1,2; \frac{3}{2}^2)$ \\
		  	$2$   			& $ 12$ & $\mathcal{W}(1,2^2,3; \frac{3}{2}^2,\frac{5}{2}^2)$ \\ 
		  	$3$ 			& $ 18$ & $\mathcal{W}(1,2^2,3^2,4; \frac{3}{2}^2,\frac{5}{2}^2,\frac{7}{2}^2)$ \\ 
		  	$4$   			& $ 24$ & $\mathcal{W}(1,2^2,3^2,4^2; \frac{3}{2}^2,\frac{5}{2}^2 ,\frac{7}{2}^2, \frac{9}{2}^2)$ \\[2mm] 
			 \hline
		  \end{tabular}
		}
		\newline
		% \caption{}
		% \end{table}
	\end{figure}
\end{thm}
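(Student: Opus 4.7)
The plan is to proceed in three stages: first argue that any additional decoupling in the simple quotient beyond Theorem \ref{THM: Minimal strong generators of U(1)-orbifold of V^k(n_4)} can occur only at finitely many levels; second, identify these levels and the corresponding decoupling relations; third, verify minimality of the resulting generating set at each exceptional level.

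For the first stage, observe that whenever $V^k(\mathfrak{n}_4)$ is simple, one has $L_k(\mathfrak{n}_4)^{U(1)} = V^k(\mathfrak{n}_4)^{U(1)}$, which is of the universal type by Theorem \ref{THM: Minimal strong generators of U(1)-orbifold of V^k(n_4)}. Simplicity of $V^k(\mathfrak{n}_4)$ passes to its $U(1)$-orbifold by \cite{doi:10.1155/S1073792896000566}, so additional decouplings can only occur at the discrete set of levels where $V^k(\mathfrak{n}_4)$ is non-simple. Since the strong generators in Theorem \ref{THM: Minimal strong generators of U(1)-orbifold of V^k(n_4)} have bounded conformal weight, the leading coefficient of any candidate decoupling relation is a rational function of $k$ with finitely many zeros, and weight bounds restrict the possibilities further; together this ensures that only finitely many levels can yield extra decouplings.

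For the second stage, I would work level by level. At each candidate $k$, locate the lowest-weight singular vector of $V^k(\mathfrak{n}_4)$ and project it onto the $U(1)$-invariant subspace. Each such $U(1)$-invariant null vector translates into an identity in $L_k(\mathfrak{n}_4)^{U(1)}$ expressing a strong generator of some weight $w$ as a normally ordered polynomial in strong generators of strictly lower weight, i.e., a decoupling relation. Acting with modes of further $U(1)$-invariant fields on the singular vector produces additional null vectors, and iterating the procedure produces the complete list of generators that decouple. For the positive integer levels $k=1,2,3,4$, the geometric interpretation of $L_k(\mathfrak{n}_4)$ as chiral de Rham on $2k$-dimensional hyper-K\"ahler manifolds (central charges $c=6,12,18,24$) already predicts the progressive cascade of decouplings: at $k=1$, only $J, T, Q^{\pm}$ survive; at $k=2,3,4$, progressively more of the generators from Theorem \ref{THM: Minimal strong generators of U(1)-orbifold of V^k(n_4)} reappear. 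For the negative half-integer levels, one uses the admissible-level representation theory of $V^k(\mathfrak{psl}_{2|2})$ to locate the relevant singular vectors.

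For the third stage, minimality at each exceptional level is confirmed by evaluating the coefficients in the decoupling relations computed for the universal algebra in Section \ref{sec:Construction of the vertex algebra} at the given $k$ and checking that, apart from the enumerated extra nulls, no further leading coefficient vanishes. Equivalently, the associated graded ring of $L_k(\mathfrak{n}_4)^{U(1)}$ still contains the remaining generators as independent $\partial$-ring generators. The main obstacle is the second stage, particularly at the integer levels $k=1,2,3,4$: explicit determination of the cascade of null vectors and their $U(1)$-invariant projections requires computer algebra with the OPE algebra of $\mathfrak{n}_4$, and one must rule out the appearance of any further decoupling beyond those listed in the table. This combinatorial verification, together with the rational-function analysis of stage one, yields the finiteness statement and pins down the table exactly.
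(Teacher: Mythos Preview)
Your overall strategy---locate null fields in the simple quotient and read off the resulting decouplings via computer algebra---matches the paper's, but several of the steps you describe would not work as written.

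First, the finiteness argument in Stage~1 is not sound. The set of levels at which $V^k(\mathfrak{n}_4)$ fails to be simple is infinite, so reducing to that set does not give finitely many candidates. The paper instead works directly in the orbifold: at each fixed conformal weight $w\leq 5$ the space of singular fields in $V^k(\mathfrak{n}_4)^{U(1)}$ is cut out by linear equations whose coefficients are polynomials in $k$, so the levels admitting a nontrivial solution are the zeros of finitely many determinants. This single computation yields both the finiteness and the explicit list of exceptional levels at once; your rational-function remark gestures at this but is not tied to the actual mechanism.

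Second, ``projecting the singular vector of $V^k(\mathfrak{n}_4)$ onto the $U(1)$-invariant subspace'' is not the right operation: the relevant singular vectors (e.g.\ $:(J^+)^{k+1}:$ at positive integer level) carry nonzero $U(1)$-charge, so their projection is zero. What is needed is the $U(1)$-invariant part of the \emph{ideal} they generate. Concretely, the paper obtains the invariant null fields by acting with modes of oppositely charged elements, e.g.\ $(\Sigma^{(0)-}_{0,\dots,0})_{(n)}\Sigma^{(0)+}_{0,\dots,0}$ at the integer levels; see Appendix~\ref{app: singular fields in U(1)-orbifold}.

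Third, your case division in Stage~2 does not match the actual exceptional set: $k=-\tfrac{4}{3}$ and $k=-\tfrac{2}{3}$ are not half-integers, so invoking admissible-level theory for ``negative half-integer levels'' leaves cases uncovered. Finally, you omit a structural shortcut the paper uses: the $V^k(\mathfrak{n}_2)$-module structure propagates a single decoupling relation (say for a $U_{i,0}$) to decouplings of the companions $A_{i,0},B_{i,0},V_{i,0}$ in the same $\mathfrak{gl}_{1|1}$-multiplet, which substantially reduces the amount of direct computation needed.
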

\begin{proof}
	It is straightforward to establish a level dependent basis for the vector space of singular fields at a fixed weight using \cite{doi:10.1142/S0129183191001001}. Let $S\in V^k(\mathfrak{n}_4)^{U(1)}$. A singular field of the form $S + \cdots$ where the ellipsis indicate a sum of normally ordered products induces a decoupling relation for the field $S$ in the simple quotient. The type of the simple quotient can therefore be determined by obtaining all possible levels which contain singular fields of the form $S+\cdots$ for which the field $S$ is a minimal strong generator. All relevant singular fields are listed in Appendix \ref{app: singular fields in U(1)-orbifold}. Note that the $V^k(\mathfrak{n}_2)$-module structure induces decoupling relations for further minimal strong generators.
\end{proof}
\begin{thm}
\label{THM: simple quotients of cyclic orbifold}
	Let $k\neq0,-2$. For all but finitely many levels $k$ the simple quotient $L_k(\mathfrak{n}_4)^{\mathbb{Z}/2\mathbb{Z}}$ is of type $\mathcal{W}(1,2^4,3^2,4^5; \frac{3}{2}^2, \frac{5}{2}^4,\frac{7}{2}^4)$. The full list of exceptions is stated in the following table.
	\begin{figure}[H]
		% \begin{table}[h]
		\footnotesize
		\makebox[\linewidth]{
		  \begin{tabular}{| r | c | l |}
		    \bottomrule
		    \hline
		    \multicolumn{1}{|c|}{level $k$} & \multicolumn{1}{|c|}{central charge} & \multicolumn{1}{|c|}{type}\\ 
		    \hline
		    \toprule
		  	$-\frac{5}{2}$ 		& $-15$ & $\mathcal{W}(1,2^4,3^2,4^4; \frac{3}{2}^2,\frac{5}{2}^4,\frac{7}{2}^4)$ \\ 
		  	$-\frac{3}{2}$  	& $ -9$ & $\mathcal{W}(1,2^3; \frac{3}{2}^2, \frac{5}{2}^2)$ \\ 
		  	$-\frac{4}{3}$ 		& $ -8$ & $\mathcal{W}(1,2^4,3^2,4^4; \frac{3}{2}^2,\frac{5}{2}^4,\frac{7}{2}^4)$ \\ 
		  	$-\frac{1}{2}$  	& $ -3$ & $\mathcal{W}(1,2^4,3; \frac{3}{2}^2,\frac{5}{2}^4)$ \\ 
		  	$1$  				& $  6$ & $\mathcal{W}(1,2,4^4; \frac{3}{2}^2,\frac{7}{2}^2)$ \\
		  	$2$   				& $ 12$ & $\mathcal{W}(1,2^4,3,4^2; \frac{3}{2}^2, \frac{5}{2}^4,\frac{7}{2}^2)$ \\ 
		  	$3$ 				& $ 18$ & $\mathcal{W}(1,2^4,3^2,4^3; \frac{3}{2}^2,\frac{5}{2}^4,\frac{7}{2}^4)$ \\ 
		  	$4$   				& $ 24$ & $\mathcal{W}(1,2^4,3^2,4^5; \frac{3}{2}^2, \frac{5}{2}^4, \frac{7}{2}^4, \frac{9}{2}^2)$ \\ 
		  	$16$   				& $ 96$ & $\mathcal{W}(1,2^4,3^2,4^6; \frac{3}{2}^2, \frac{5}{2}^4, \frac{7}{2}^4, \frac{9}{2}^2)$\\[2mm]
			 \hline
		  \end{tabular}
		}
		\newline
		% \caption{}
		% \end{table}
	\end{figure}
\end{thm}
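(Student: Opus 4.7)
The plan is to mirror the strategy used to prove Theorem \ref{THM: simple quotients of U(1)-orbifold}, adapted to the larger generating set of the $\mathbb{Z}/2\mathbb{Z}$-orbifold. Concretely, by Theorem \ref{THM: Main Theorem. minimal strong generators of cyclic orbifold Z/2Z} the universal orbifold $V^k(\mathfrak{n}_4)^{\mathbb{Z}/2\mathbb{Z}}$ is strongly generated for $k\neq -2,0,4,16$ by the set $\mathcal{S}$ consisting of the $U(1)$-generators together with the six new fields $\Sigma^{(i)\pm}_{0,0},\Sigma^{(0)\pm}_{2,0},\Sigma^{(1)\pm}_{1,0},\Sigma^{(2)\pm}_{1,0}$. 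The generic simplicity of $V^k(\mathfrak{n}_4)$ implies that $L_k(\mathfrak{n}_4)^{\mathbb{Z}/2\mathbb{Z}}$ is of the same type for generic $k$, and further decoupling relations in $L_k(\mathfrak{n}_4)^{\mathbb{Z}/2\mathbb{Z}}$ arise exactly when the universal orbifold has a singular vector whose leading term is (a nontrivial scalar multiple of) one of the minimal strong generators.

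The procedure is therefore as follows. First, for each minimal strong generator $S\in\mathcal{S}$ of conformal weight $h$, set up a basis for the space of $\mathbb{Z}/2\mathbb{Z}$-invariant fields of weight $h$ in $V^k(\mathfrak{n}_4)$ and impose the singular-vector conditions on a general linear combination; this gives a linear system in the coefficients whose rank depends rationally on $k$. The values of $k$ at which the rank drops and a new singular vector of the form $S + (\text{lower order})$ appears are the candidate exceptional levels for a decoupling relation that eliminates $S$. Second, apply the involution $\theta$ from the proof of Theorem \ref{THM: Main Theorem. minimal strong generators of cyclic orbifold Z/2Z} to pair up each $\Sigma^{(i)+}$ type generator with its $\Sigma^{(i)-}$ counterpart, cutting the work roughly in half. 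Third, intersect these level lists with those obtained from the $V^k(\mathfrak{n}_4)^{U(1)}$-module structure and the $V^k(\mathfrak{n}_2)$-subalgebra decoupling relations, since any decoupling that already occurs in $L_k(\mathfrak{n}_4)^{U(1)}$ persists in $L_k(\mathfrak{n}_4)^{\mathbb{Z}/2\mathbb{Z}}$. Reading off Theorem \ref{THM: simple quotients of U(1)-orbifold} yields the levels $-\tfrac52,-\tfrac43,-\tfrac12,1,2,3,4$ where the $U(1)$-part of the type shrinks, and we must add the levels from the new generators (producing, for instance, the entry $k=16$ where $\Sigma^{(0)\pm}_{2,0}$ become needed again via the already-known reappearance of $U_{2,0},A_{2,0},B_{2,0}$).

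The main technical step is the explicit computation of the singular vectors at weights $2,3,4,\tfrac52,\tfrac72$ in the $\mathbb{Z}/2\mathbb{Z}$-invariant subspace of $V^k(\mathfrak{n}_4)$ for generic $k$ treated as a formal parameter; this is done using Thielemans' \texttt{Mathematica} package as in the proof of Theorem \ref{THM: simple quotients of U(1)-orbifold} and footnoted in Theorem \ref{THM: Main Theorem. minimal strong generators of cyclic orbifold Z/2Z}. Once a singular vector of the form $S+\cdots$ is produced at level $k_0$, the quotient map $V^k(\mathfrak{n}_4)^{\mathbb{Z}/2\mathbb{Z}}\twoheadrightarrow L_{k_0}(\mathfrak{n}_4)^{\mathbb{Z}/2\mathbb{Z}}$ turns this identity into a decoupling relation expressing $S$ as a normally ordered polynomial in the remaining generators. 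Minimality at the exceptional levels follows by checking that the listed generators are still linearly independent modulo the differential-polynomial span of the others, which in turn follows from the absence of singular vectors at the required weight of the appropriate leading form.

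The hard part is bookkeeping rather than conceptual: each minimal strong generator must be tested individually, and at each exceptional level one has to verify that a decoupling relation actually exists \emph{for each} generator predicted by the table, while also checking that no \emph{additional} unexpected singular vectors force still further decouplings. In practice this is organized by listing all singular fields up to the maximum weight $4$ (respectively $\tfrac72$ for the odd sector) of the minimal strong generators, extracting the leading minimal-strong-generator terms and their coefficients as rational functions of $k$, and reading off the zero sets. The table in the theorem statement is precisely the output of this enumeration.
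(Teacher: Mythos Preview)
Your proposal is correct and follows essentially the same approach as the paper: the paper's proof explicitly states it is analogous to that of Theorem \ref{THM: simple quotients of U(1)-orbifold}, lists the relevant singular fields in an appendix (computed via Thielemans' package), and invokes both the $V^k(\mathfrak{n}_2)$-module structure and the automorphism $\theta$ to propagate decoupling relations---exactly the ingredients you describe. One small slip: in your illustrative list of levels inherited from the $U(1)$-orbifold you omit $k=-\tfrac{3}{2}$, which does appear in both tables; also note that $k=-\tfrac{2}{3}$ (present in the $U(1)$ table) drops out for $\mathbb{Z}/2\mathbb{Z}$ only because the generator that decouples there ($V_{2,0}$) is already absent from the generic $\mathbb{Z}/2\mathbb{Z}$ type, so your ``persistence'' remark needs that caveat.
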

\begin{proof}
	The proof is analogous to the proof of Theorem \ref{THM: simple quotients of U(1)-orbifold}. All relevant singular fields are listed in Appendix \ref{app: singular fields in cyclic orbifold}. Again, note that the $V^k(\mathfrak{n}_2)$-module structure as well as the action of the automorphism $\theta$ as defined in the proof of Theorem \ref{THM: Main Theorem. minimal strong generators of cyclic orbifold Z/2Z} induce decoupling relations for further minimal strong generators. 
\end{proof}

Given that the proofs of Theorems \ref{THM: simple quotients of U(1)-orbifold} and \ref{THM: simple quotients of cyclic orbifold} are purely computational some remarks are in order:

\begin{rem}\label{rem:integer}
	 It is apparent from the singular fields in appendix \ref{app: singular fields in cyclic orbifold} that for $k=1$, $L_k(\mathfrak{n}_4)$ admits an action of the simple vertex algebra $L_k(\mathfrak{sl}_2)$ at level $k=1$. This statement can also be seen using free field realizations of $L_1(\mathfrak n_4)$, see \cite[Lemma 3.4]{Creutzig:2013mqa}. For positive integer $n$, $L_{n+1}(\mathfrak{sl}_2)$ embeds in $L_n(\mathfrak{sl}_2) \otimes L_1(\mathfrak{sl}_2)$ and since $\mathfrak n_4$ is a Lie superalgebra also a homomorphic image of $V^{n+1}(\mathfrak n_4)$ embeds into $L_n(\mathfrak n_4) \otimes L_1(\mathfrak n_4)$. It thus follows that this homomorphic image of $V^{n+1}(\mathfrak n_4)$ containts a copy of $L_{n+1}(\mathfrak{sl}_2)$ and so especially the simple quotient $L_{n+1}(\mathfrak n_4)$ containts a copy of $L_{n+1}(\mathfrak{sl}_2)$
\end{rem}

\begin{rem}
The levels $-\frac{1}{2}(3+2n)$ for a positive integer $n$, are also special. We will see in Theorem \ref{thm:Wiso} that at these levels, an orbifold of a coset of $L_k(\mathfrak{n}_4)$ is a principal $\mathcal{W}$-algebra of type $A$. 
The special cases $k=-\frac{1}{2}$ and $k=-\frac{3}{2}$ are already well understood. Namely, $L_{-\frac{1}{2}}(\mathfrak{n}_4) \cong (\mathcal A(1) \otimes \mathcal S(1))^{\mathbb Z/2\mathbb Z}$ by \cite[Thm 4.14]{CKL}. Here $\mathcal A(1)$ is the rank one symplectic fermion algebra and $\mathcal S(1)$ the rank one $\beta\gamma$ system.	
The construction of $L_k(\mathfrak{n}_4)$ at level $k=-\frac{3}{2}$ is first given in \cite{2014arXiv1407.1527A}, and in Theorem 2.5 of \cite{CGL} it is shown that
\[
L_{-\frac{3}{2}}(\mathfrak{n}_4) \cong \bigoplus_{n=0}^\infty L_{-\frac{3}{2}}(n\omega) \otimes \rho_{n\omega}
\]
as $L_{-\frac{3}{2}}(\mathfrak{sl}_2) \otimes SU(2)$-modules. In this notation, $\omega$ denotes the fundamental weight of $\mathfrak{sl}_2$, $\rho_{n\omega}$ denotes the irreducible $\mathfrak{sl}_2$-module with highest weight $n\omega$, and $L_{-\frac{3}{2}}(n\omega)$ denotes the corresponding irreducible $L_{-\frac{3}{2}}(\mathfrak{sl}_2)$-module.

Let us also note that this series of special points is suggested in \cite{Bonetti:2018fqz} to be subalgebras of the chiral algebras of certain four-dimensional super Yang-Mills theories.
\end{rem}

\begin{cor}
\label{cor:enriques}
The vertex algebra of global sections of the chiral de Rham complex on a complex Enriques surface is of type $\mathcal{W}(1,2,4^4; \frac{3}{2}^2,\frac{7}{2}^2)$. It is strongly generated by the fields \[J(z),Q^{\pm}(z),T(z),\Sigma^{(1)\pm}_{1,0},\Sigma^{(0)\pm}_{2,0},\Sigma^{(2)\pm}_{1,0}.\]
\end{cor}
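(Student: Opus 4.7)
The plan is to reduce the corollary directly to the $k=1$ case of Theorem \ref{THM: simple quotients of cyclic orbifold}, after identifying the Enriques vertex algebra as a $\mathbb{Z}/2\mathbb{Z}$-orbifold of $L_1(\mathfrak{n}_4)$. First, I would recall that any complex Enriques surface $X$ is the quotient $Y/\iota$ of a $K3$ surface $Y$ by a fixed-point-free involution, and that by Theorem~6.6 of \cite{2016arXiv160903688S} together with the orbifold construction of \cite{2003math......7181F}, we have $H^0(X,\Omega^{CdR})\cong H^0(Y,\Omega^{CdR})^{\iota}$. Combining this with Song's identification $H^0(Y,\Omega^{CdR})\cong L_1(\mathfrak n_4)$ and the explicit description of the lifted involution given in \emph{loc.\ cit.}, the task becomes to show that $\iota$ acts as the order-two element of the $U(1)$-action introduced at the beginning of Section~\ref{sec:Construction of the vertex algebra}, i.e.\ as multiplication by $-1$ on $J^{\pm},G^{\pm}$ and trivially on $J,T,Q^{\pm}$. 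This identification is a direct comparison of characters/weights and is the conceptual (as opposed to computational) input of the proof.

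Once the identification $H^0(X,\Omega^{CdR})\cong L_1(\mathfrak n_4)^{\mathbb{Z}/2\mathbb{Z}}$ is in place, the type $\mathcal{W}(1,2,4^4;\tfrac{3}{2}^2,\tfrac{7}{2}^2)$ is exactly the entry for $k=1$ in the table of Theorem~\ref{THM: simple quotients of cyclic orbifold}, so this part is immediate. The remaining issue is therefore to pin down the explicit minimal strong generating set $\{J,Q^{\pm},T,\Sigma^{(1)\pm}_{1,0},\Sigma^{(0)\pm}_{2,0},\Sigma^{(2)\pm}_{1,0}\}$. Starting from the generic set $\mathcal{S}=\{H,Q^{\pm},T,U_{i,0},A_{i,0},B_{i,0},V_{i,0},\Sigma^{(i)\pm}_{0,0},\Sigma^{(0)\pm}_{2,0},\Sigma^{(1)\pm}_{1,0},\Sigma^{(2)\pm}_{1,0}\}_{i=0}^{1}$ produced by Theorem~\ref{THM: Main Theorem. minimal strong generators of cyclic orbifold Z/2Z}, I would successively eliminate the superfluous generators by invoking, at $k=1$: the $V^1(\mathfrak n_2)^{U(1)}$-type simplification from Theorem~\ref{THM: simple quotients of U(1)-orbifold} (which reduces the $U(1)$-invariant generators to $\{J,Q^{\pm},T\}$ at $c=6$), together with the specific singular vectors in $L_1(\mathfrak n_4)^{\mathbb{Z}/2\mathbb{Z}}$ listed in Appendix~\ref{app: singular fields in cyclic orbifold}. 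Each such singular vector, having leading term one of the generators we wish to eliminate, produces a decoupling relation expressing that generator as a normally ordered polynomial in the retained fields $\{J,Q^{\pm},T,\Sigma^{(1)\pm}_{1,0},\Sigma^{(0)\pm}_{2,0},\Sigma^{(2)\pm}_{1,0}\}$ and their derivatives.

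The main obstacle is not conceptual but bookkeeping: one must verify that every generator in $\mathcal{S}\setminus\{J,Q^{\pm},T,\Sigma^{(1)\pm}_{1,0},\Sigma^{(0)\pm}_{2,0},\Sigma^{(2)\pm}_{1,0}\}$ actually appears as the leading term of (or is reached via a sequence of) decoupling relations derived from the appendix, and that the remaining six fields are genuinely independent, i.e.\ no further singular vector couples them. The independence can be checked by computing weight-by-weight dimensions of $L_1(\mathfrak n_4)^{\mathbb{Z}/2\mathbb{Z}}$ up to, say, weight $5$, and comparing with the character of a free $\mathcal{W}$-algebra of the proposed type; alternatively, the involution $\theta$ used in the proof of Theorem~\ref{THM: Main Theorem. minimal strong generators of cyclic orbifold Z/2Z} immediately pairs $+$ and $-$ decoupling relations, cutting the required verifications roughly in half. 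Once these (computer-assisted) verifications are in place, the corollary follows.
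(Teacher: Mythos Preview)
Your proposal is correct and follows essentially the same approach as the paper: identify the Enriques global sections with $L_1(\mathfrak{n}_4)^{\mathbb{Z}/2\mathbb{Z}}$ via Song's theorem, then read off the $k=1$ row of Theorem~\ref{THM: simple quotients of cyclic orbifold}. The paper's own proof is far terser---it simply cites Theorem~6.6 of \cite{2018arXiv180910320S} together with Theorem~\ref{THM: simple quotients of cyclic orbifold} and leaves the extraction of the explicit generating set implicit in the appendix computations---whereas you spell out the bookkeeping of which singular vectors eliminate which generators; this added detail is helpful but not a different strategy.
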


\begin{proof} This follows immediately from Theorem 6.6 of \cite{2018arXiv180910320S}, together with the description of $L_1(\mathfrak{n}_4)^{\mathbb{Z}_2}$ given in Theorem \ref{THM: simple quotients of cyclic orbifold}.
\end{proof}

%---------------------------------------------------------------------------------------%
\section{Coset of $V^k(\mathfrak{n}_4)$ by its affine subalgebra}\label{sec:7}
In this section, we study the coset
\begin{equation} \mathcal{C}^k = \text{Com}(V^k(\mathfrak{sl}_2),  V^k(\mathfrak{n}_4)), \end{equation} and we regard $V^k(\mathfrak{n}_4)$ as an extension of $V^k(\mathfrak{sl}_2) \otimes \mathcal{C}^k$. In Theorem 5.4 of \cite{ACKL17}, $\mathcal{C}^k$ was incorrectly stated to be of type $\mathcal{W}(2,3^3,4,5^3,6,7^3,8)$. In this section, we give the correct description as well as some more details about its structure.

As in Section 4 of \cite{ACKL17}, if we rescale the generators of $ V^k(\mathfrak{n}_4)$ by $\frac{1}{\sqrt{k}}$, there is a well-defined limit as $k \rightarrow \infty$, and
$$\lim_{k\rightarrow \infty} V^k(\mathfrak{n}_4) \cong \mathcal{H}(3) \otimes \mathcal{T} \otimes \mathcal{G}_{\text{odd}}(4).$$

In this notation, $\mathcal{H}(3)$ is the rank $3$ Heisenberg vertex algebra, $ \mathcal{T}$ is a generalized free field algebra with one even generator $T$ satisfying $T(z) T(w) \sim \frac{6}{(z-w)^{4}}$, and $ \mathcal{G}_{\text{odd}}(4)$ is the generalized free field algebra with odd generators $G^{\pm}, Q^{\pm}$ satisfying
$$G^+(z) G^{-}(w) \sim \frac{2}{(z-w)^{3}},\qquad Q^+(z) Q^{-}(w) \sim \frac{2}{(z-w)^{3}}.$$ Note that our normalizations of the generator differ slightly from those in \cite{ACKL17}, but this does not change the above result. Note that the action of the inner automorphism group $G_{\text{Inn}} \cong SL_2$ on $V^k(\mathfrak{n}_4)$ coming from integrating the zero-mode action of $\mathfrak{sl}_2$ gives rise to action of $SL_2$ on $\mathcal{G}_{\text{odd}}(4)$, such that $\{G^+, Q^+\}$ and $\{G^-, Q^-\}$ both transform as copies of the standard module $\mathbb{C}^2$.

As shown in \cite{ACKL17} right before Theorem 4.12, $\mathcal{C}^k$ has a well-defined limit as $k \rightarrow \infty$, and
 $$\lim_{k\rightarrow \infty} \mathcal{C}^k \cong \mathcal{T} \otimes \big( \mathcal{G}_{\text{odd}}(4) \big)^{SL_2}.$$
Moreover, structure of $\big( \mathcal{G}_{\text{odd}}(4) \big)^{SL_2}$ can be worked out using classical invariant theory. First, we have the infinite generating set
\begin{equation} \begin{split} m^j & =\  :G^+ \partial^j Q^+: \ +\  :\partial^j G^+ Q^+:,
\\ p^j & =\  :G^- \partial^j Q^-: \ +\  :\partial^j G^- Q^-:,
\\ w^j & = \ :G^+ \partial^j G^-: \ +\  :Q^+ \partial^j Q^-:. \end{split} \end{equation} 
for $j\geq 0$. Note that $m^j, p^j, w^j$ each have weight $j+3$. It is straightforward to check that $\{w^0, m^0, p^0\}$ generates the algebra $( \mathcal{G}_{\text{odd}}(4))^{SL_2}$, and that the set $\{w^i, m^j, p^j|\ i = 0,1,2,3,\ j = 0,2\}$ close under OPE, and hence strongly generates the algebra. We obtain

\begin{thm} $\big( \mathcal{G}_{\text{odd}}(4) \big)^{SL_2}$ is of type $\mathcal{W}(3^3,4,5^3,6)$, so that $\mathcal{C}^k$ is of type $\mathcal{W}(2,3^3,4,5^3,6)$ for generic values of $k$.
\end{thm}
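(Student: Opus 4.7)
The plan is to first analyze the large-$k$ limit $(\mathcal{G}_{\text{odd}}(4))^{SL_2}$ using the filtered structure of $\mathcal{G}_{\text{odd}}(4)$ together with classical $SL_2$-invariant theory, and then transfer the result to $\mathcal{C}^k$ for generic $k$ by means of the weak filtration recalled above.

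Equip $\mathcal{G}_{\text{odd}}(4)$ with the good increasing filtration by total monomial degree in the generators and their derivatives. The associated graded $\mathcal{R}$ is the free supercommutative differential superalgebra on the odd variables $\{\partial^j G^\pm, \partial^j Q^\pm\}_{j\geq 0}$, and since the $SL_2$-action preserves this filtration, $gr\bigl((\mathcal{G}_{\text{odd}}(4))^{SL_2}\bigr) \cong \mathcal{R}^{SL_2}$. At each derivative level the space of variables decomposes as a direct sum of two copies of the standard module $\mathbb{C}^2$ of $SL_2$: one spanned by $(\partial^j G^+, \partial^j Q^+)$, the other by $(\partial^j Q^-, -\partial^j G^-)$. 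The first fundamental theorem of classical invariant theory for $SL_2$ then yields that $\mathcal{R}^{SL_2}$ is generated as a supercommutative differential ring by the quadratic pairings against the symplectic form; these quadratic invariants are precisely the classical limits of the fields $w^j, m^j, p^j$ together with their shifted analogues. By Lemma \ref{THM Lian-Linshaw}, the quantum fields therefore strongly generate $(\mathcal{G}_{\text{odd}}(4))^{SL_2}$.

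The subclaim that $\{w^0, m^0, p^0\}$ already generates the invariant algebra as a vertex algebra follows from iterated OPE: $w^0$ with itself has singular OPE of orders three through six coming from the contractions $G^+G^-$ and $Q^+Q^-$, producing $w^1, w^2, w^3$ in its singular part, while subsequent OPEs involving $m^0$ and $p^0$ reach every other quadratic invariant. The classical Leibniz identities, for instance $m^1 = \tfrac12\partial m^0$ and $m^3 = \tfrac32\partial m^2 - \tfrac14\partial^3 m^0$ (with analogous formulas for $p$), show in addition that the odd-indexed $m^{2k+1}, p^{2k+1}$ are derivatives of even-indexed ones. The second subclaim, that the eight-element set $\{w^0,w^1,w^2,w^3,m^0,m^2,p^0,p^2\}$ closes under OPE and hence strongly generates $(\mathcal{G}_{\text{odd}}(4))^{SL_2}$, is verified by direct calculation of every OPE $X(z)Y(w)$ with $X,Y$ in the set: collecting singular terms and rewriting them as normally ordered polynomials in the eight fields and their derivatives produces both the closure relations and the decoupling relations for $w^j$ with $j\geq 4$ and $m^{2k}, p^{2k}$ with $k\geq 2$. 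This step is the main technical obstacle; it is lengthy but entirely mechanical, and is most efficiently carried out with Thielemans' Mathematica package \cite{doi:10.1142/S0129183191001001}. Minimality of the eight generators is immediate because the corresponding classical invariants $w_{0,0}, w_{0,1}, w_{0,2}, w_{0,3}, m_{0,0}, m_{0,2}, p_{0,0}, p_{0,2}$ are linearly independent in $\mathcal{R}^{SL_2}$ modulo products and derivatives of strictly lower-weight invariants.

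To transfer the result to $\mathcal{C}^k$ at generic $k$, use the weak filtration on $\mathcal{C}^k$ whose associated graded is $\mathcal{T}\otimes (\mathcal{G}_{\text{odd}}(4))^{SL_2}$, the factor $\mathcal{T}$ contributing a single generator of conformal weight $2$, namely the coset Virasoro. By the weak-filtration analog of Lemma \ref{THM Lian-Linshaw} (Lemma 4.1 of \cite{2015arXiv151109143A}), a strong generating set of the graded lifts to a strong generating set of $\mathcal{C}^k$, so adjoining $T$ to the eight generators above exhibits $\mathcal{C}^k$ as type $\mathcal{W}(2,3^3,4,5^3,6)$, as claimed.
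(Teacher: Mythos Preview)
Your argument is correct and follows the same route as the paper, which simply asserts that $\{w^0,m^0,p^0\}$ generates and that $\{w^i,m^j,p^j \mid i=0,1,2,3,\ j=0,2\}$ closes under OPE; you have supplied the details the paper omits. One small imprecision: in the final paragraph you invoke a ``weak filtration on $\mathcal{C}^k$'' with associated graded $\mathcal{T}\otimes(\mathcal{G}_{\text{odd}}(4))^{SL_2}$, but what the paper actually uses is the $k\to\infty$ limit of $\mathcal{C}^k$ as a one-parameter vertex algebra (following \cite{ACKL17}), from which strong generation at generic $k$ is read off; this is a deformation argument rather than a filtration on a single $\mathcal{C}^k$, though the conclusion is the same.
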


In \cite{ACKL17}, it was also stated (correctly) that $\mathcal{C}^k$ contains a vertex subalgebra of type $\mathcal{W}(2,3,4,5,6,7,8)$. We now give more details about this subalgebra. First of all, inside the outer automorphism group $G_{\text{Out}} \cong SL_2$, there is a copy of $U(1)$, and a corresponding outer action of the one-dimensional abelian Lie algebra $\mathfrak{t}$. Note that the fields $w^i, m^j, p^j$ are eigenvectors with eigenvalue $0, -2, 2$ under this action, respectively. It follows that the orbifold $(\mathcal{C}^k)^{U(1)}$ is strongly generated by the fields $\{w^i|\ i\geq 0\}$ together with all monomials 
$$:\partial^{a_1} p^{j_1} \partial^{a_2} p^{j_2} \cdots \partial^{a_s} p^{j_s} \partial^{b_1} m^{k_1} \partial^{b_2} m^{k_2} \cdots \partial^{b_s} m^{k_s}:,$$ where $a_i, b_i, j_i, k_i$ are nonnegative integers, and $s\geq 1$.

Moreover, one can verify by computer calculation that the fields $\{w^i|\ i = 0,1,2,3,4,5\}$ close under OPE, and that for $a,b\geq 0$ and $j,k = 0,2$, the field $:\partial^a p^j \partial^b m^k:$ lies in the subalgebra generated by $\{w^i|\ i = 0,1,2,3,4,5\}$. From this observation, and by induction on $s$, we obtain

\begin{thm} $(\mathcal{C}^k)^{U(1)}$ is strongly generated by fields $\{w^i| \ i = 0,1,2,3,4,5\}$, and hence is of type $\mathcal{W}(2,3,4,5,6,7,8)$.
\end{thm}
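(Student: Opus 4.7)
The plan is to show that the vertex subalgebra $\mathcal{V}'\subseteq (\mathcal{C}^k)^{U(1)}$ generated by the Virasoro field $T$ and $\{w^0,w^1,\ldots,w^5\}$ is in fact all of $(\mathcal{C}^k)^{U(1)}$. Since the preceding discussion already supplies a strong generating set for $(\mathcal{C}^k)^{U(1)}$ consisting of the fields $\{w^i\mid i\geq 0\}$ together with all $U(1)$-invariant normally ordered monomials in $\partial^a p^j$ and $\partial^b m^k$ with $j,k\in\{0,2\}$ and equal numbers of $p$'s and $m$'s, it suffices to show that each of these fields lies in $\mathcal{V}'$.

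All of the relevant verifications are carried out in the free-field limit $\mathcal{T}\otimes\mathcal{G}_{\text{odd}}(4)$, where OPEs reduce to Wick contractions of $G^\pm, Q^\pm$; the conclusion at generic $k$ then follows by a standard deformation argument, since the OPE coefficients are polynomial in $1/k$ and decoupling identities obtained in the limit lift to identities at generic $k$. First, I would verify by symbolic computation (Thielemans' \texttt{OPEdefs}, for instance) that the six fields $\{w^0,\ldots,w^5\}$ close under OPE modulo normally ordered polynomials in $T$ and themselves, so that $\mathcal{V}'$ is a well-defined vertex subalgebra. Second, for each $m\geq 6$, I would argue $w^m\in\mathcal{V}'$ by induction on $m$: one chooses $i,j$ with $i+j=m-2$ (initially with $i,j\leq 5$, and for $m\geq 13$ drawing on already-constructed $w^l$ with $l<m$) such that the simple pole of $w^i(z)w^j(w)$ contains a term $c_{i,j,m}\, w^m$ with $c_{i,j,m}\neq 0$, plus normally ordered polynomials in the $w^l$ with $l<m$, and then solves for $w^m$.

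Third, I would show that each quadratic $:\partial^a p^j\partial^b m^k:$ with $j,k\in\{0,2\}$ lies in $\mathcal{V}'$. The four base cases $(j,k)\in\{0,2\}^2$ with $a=b=0$ are produced explicitly as OPE residues of suitable $w^i(z)w^j(w)$, after subtracting normally ordered lower-weight corrections that are already in $\mathcal{V}'$ by steps one and two. The general case with derivatives then follows from closure of $\mathcal{V}'$ under $\partial$, combined with further OPE identities, by inverting the triangular system that relates $\partial^N:p^jm^k:$ to $\{:(\partial^a p^j)(\partial^b m^k):\}_{a+b=N}$. Finally, the higher-degree $U(1)$-invariant monomials are handled by induction on the length $s$: given a length-$2s$ monomial, factor it as $:XY:$ with $X$ a quadratic and $Y$ of length $2(s-1)$, both in $\mathcal{V}'$ by the inductive hypothesis, and note that the reordering error from non-associativity of the Wick product is a sum of strictly shorter monomials, again in $\mathcal{V}'$ by induction.

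The main obstacle is the symbolic computation underlying the first two steps and the base cases of the third: the OPE algebra among six generators of weights $3$ through $8$ is voluminous, and the non-vanishing of each structure constant $c_{i,j,m}$ and of the analogous coefficients that extract the four base quadratics $:p^jm^k:$ from $w^i(z)w^j(w)$ must be verified. These checks are finite, bounded by weight considerations, but require computer algebra to execute; once the key structure constants are confirmed non-zero, the inductive passage to all $w^m$ with $m\geq 6$ and to all $U(1)$-invariant monomials is entirely formal.
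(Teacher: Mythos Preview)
Your approach is essentially the paper's: verify by computer that $\{w^0,\dots,w^5\}$ close under OPE, check that the quadratics $:\partial^a p^j\,\partial^b m^k:$ lie in the subalgebra they generate, and then induct on the length $s$ of the mixed monomials. Two points are worth flagging.

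First, your step~2 (an inductive argument that $w^m\in\mathcal{V}'$ for $m\geq 6$) is not needed. The preceding theorem already shows that $\mathcal{C}^k$ is strongly generated by $\{T,w^0,\dots,w^3,m^0,m^2,p^0,p^2\}$, so each $w^m$ with $m\geq 4$ is a normally ordered polynomial in these; since $w^m$ has $U(1)$-weight zero, every monomial in that expression carries equal numbers of $p$'s and $m$'s and is therefore covered by your steps~3 and~4. The paper omits this step for exactly this reason.

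Second, and more substantively, your mechanism in step~3 for producing the base quadratics $:p^j m^k:$ is not right: they do \emph{not} appear as OPE residues of $w^i(z)w^j(w)$. In the free limit the fields $w^i$ involve only the pairs $G^+G^-$ and $Q^+Q^-$, so single Wick contractions among them again produce fields of $w$-type (and double contractions give scalars); no $:G^\mp Q^\mp:$ combinations arise in the singular part. The quadratics instead show up in the \emph{regular} part, i.e.\ as normally ordered products: for instance $:w^0 w^0:$ has leading quartic $2\,{:}G^+G^-Q^+Q^-{:}$, which up to reordering and sign is $:p^0 m^0:$. The paper's phrasing ``lies in the subalgebra generated by $\{w^i\}$'' is exactly this: one matches the quartic expansion of $:\partial^a p^j\,\partial^b m^k:$ against suitable normally ordered polynomials in the $w^i$ (and $T$), not against OPE poles. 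Once you adjust this, your computer check and the inductive passage to higher $s$ go through as you describe.
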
 

We may take the weight $3$ field $w^0$ to be primary of weight $3$ and we normalize it so that its sixth order pole with itself is $\displaystyle \frac{k (3 + 2 k)}{2 + k} = \frac{c}{3}$. Following the notation in \cite{L17}, we denote this field by $W^3$; it has the explicit form 
$$W^3 = \frac{1}{\sqrt{8 + 4 k}} (:G^+ G^-: + :Q^+ Q^-: - \partial T).$$ Moreover, it is not difficult to verify that it generates $(\mathcal{C}^k)^{U(1)}$. Following the convention of \cite{L17}, we may take the strong generating set for $(\mathcal{C}^k)^{U(1)}$ to be $\{L, W^i|\ i = 3,4,5,6,7,8\}$, where $W^i = W^3_{(1)} W^{i-1}$ for $i = 4,5,6,7,8$.

It is readily verified that the hypotheses of Theorem 6.2 of \cite{L17} are satisfied, so that $(\mathcal{C}^k)^{U(1)}$ can be realized as a quotient of $\mathcal{W}(c, \lambda)$ of the form $\mathcal{W}_I(c,\lambda) = \mathcal{W}^I(c,\lambda) / \mathcal{I}$. In this notation, $I\subseteq \mathbb{C}[c,\lambda]$ is some prime ideal in the ring of parameters $\mathbb{C}[c,\lambda]$, and $\mathcal{I}$ is the maximal proper graded ideal of $\mathcal{W}^I(c,\lambda) = \mathcal{W}(c,\lambda) / I \cdot \mathcal{W}^I(c,\lambda)$.

By computing the third order pole of $W^3$ with itself, it is straightforward to verify that $I$ is the ideal $(\lambda + \frac{1}{16})$. Rather surprisingly, this same vertex algebra was studied in Section 11 of \cite{L17}. Combining this calculation with Corollary 10.3 of \cite{L17}, we obtain

\begin{thm} \label{ckorbifoldcoinc} $(\mathcal{C}^k)^{U(1)}$ is isomorphic to the coset $$\text{Com}(V^{\ell}(\mathfrak{sl}_2), V^{\ell+1}(\mathfrak{sl}_2) \otimes \mathcal{W}_{-5/2}(\mathfrak{sl}_4, f_{\text{rect}})),$$ where the parameters $k$ and $\ell$ are related by $\displaystyle k = -\frac{\ell+1}{\ell+2}$. \end{thm}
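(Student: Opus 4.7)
The plan is to realize $(\mathcal{C}^k)^{U(1)}$ as a one-parameter quotient of the universal two-parameter $\mathcal{W}_{\infty}$-algebra $\mathcal{W}(c,\lambda)$ of \cite{L17}, and then identify the truncation curve in $(c,\lambda)$-space with the corresponding curve for the right-hand side coset, which has already been computed in Section 11 of \cite{L17}. The excerpt has already done most of the set-up: $(\mathcal{C}^k)^{U(1)}$ is of type $\mathcal{W}(2,3,4,5,6,7,8)$ with strong generators $\{L,W^i : i=3,\ldots,8\}$ obtained iteratively by $W^i = W^3_{(1)} W^{i-1}$, with $W^3$ primary of weight $3$, normalized so that the sixth-order pole of $W^3$ with itself is $\frac{c}{3}$. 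This is exactly the format required by Theorem 6.2 of \cite{L17}, so that theorem produces a surjection $\mathcal{W}^I(c,\lambda) \twoheadrightarrow (\mathcal{C}^k)^{U(1)}$ for some prime ideal $I \subseteq \mathbb{C}[c,\lambda]$.

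Next I would pin down $I$ by a direct OPE computation. In the universal $\mathcal{W}(c,\lambda)$, the parameter $\lambda$ appears as the structure constant of $L$ in the third-order pole $W^3_{(3)} W^3$, so computing that one coefficient inside $V^k(\mathfrak{n}_4)$ using the explicit form
\[ W^3 = \frac{1}{\sqrt{8+4k}}\bigl(:G^+G^-: + :Q^+Q^-: - \partial T\bigr) \]
together with the $\mathfrak{n}_4$ OPEs in \eqref{OPEs} is a finite Wick-calculus task. The expected outcome is that the $c$-dependence drops out entirely and one obtains $\lambda = -\tfrac{1}{16}$, so that $I = (\lambda + \tfrac{1}{16})$ and $(\mathcal{C}^k)^{U(1)}$ is a quotient of the simple one-parameter vertex algebra cut out by this truncation curve.

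With the truncation ideal in hand, I would invoke Corollary 10.3 of \cite{L17}, which classifies one-parameter quotients of $\mathcal{W}(c,\lambda)$ along a fixed truncation curve: any two such families coincide after a reparameterization of the central charge. Section 11 of \cite{L17} establishes that $\text{Com}(V^{\ell}(\mathfrak{sl}_2), V^{\ell+1}(\mathfrak{sl}_2) \otimes \mathcal{W}_{-5/2}(\mathfrak{sl}_4, f_{\text{rect}}))$ is a one-parameter quotient of $\mathcal{W}(c,\lambda)$ cut out by precisely the same curve $\lambda = -\tfrac{1}{16}$. Comparing the central charge of the $\mathcal{N}=4$ coset, which is a known rational function of $k$, with the central charge formula for the right-hand coset as a function of $\ell$, and solving the resulting equation, will force the parameter relation $k = -\tfrac{\ell+1}{\ell+2}$.

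The main obstacle is the explicit third-order pole computation of $W^3$ with itself inside $V^k(\mathfrak{n}_4)$: it must be expanded using only the fundamental OPEs and rewritten in terms of $L$ and lower-weight strong generators, then the coefficient of $L$ compared to the universal expression in \cite{L17}. Once that single number is in hand, everything else reduces to invoking the classification results of \cite{L17} and matching central charges, which is a routine algebraic verification. A minor additional point is checking that the generic irreducibility of the quotient is preserved under the reparameterization, but this also follows from the one-parameter classification since both families are defined for generic $k$ (equivalently generic $\ell$).
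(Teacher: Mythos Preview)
Your proposal is correct and follows essentially the same route as the paper: verify the hypotheses of Theorem 6.2 of \cite{L17}, compute the third-order pole of $W^3$ with itself to identify the truncation ideal as $(\lambda+\tfrac{1}{16})$, observe that this same curve is shown in Section 11 of \cite{L17} to cut out the coset on the right-hand side, and conclude via Corollary 10.3 of \cite{L17} together with a central-charge match. The only minor imprecision is your description of where $\lambda$ sits in the OPE (it is not literally the coefficient of $L$), but this does not affect the argument, since the paper likewise just says one reads off $\lambda$ from the third-order pole.
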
 
\begin{rem}
This Theorem nicely relates to the coset realization of $V^k(\mathfrak n_4)$ of \cite{CFL}.
Let $L_{k}(n\omega)$ denote the irreducible highest-weight module of $\widehat{\mathfrak{sl}}_2$ of highest weight $n\omega$ at level $k$. $\omega$ is the fundamental weight of $\mathfrak{sl}_2$ and $\rho_{m\omega}$ denotes the irreducible highest-weight module of $SU(2)$ of highest weight $m\omega$. Also let $\bar n$ be equal to $0$ if $n$ is even and $1$ otherwise.
 We have the following list of isomorphisms
\begin{enumerate}
\item In \cite[Section 5]{CKLR} diagonal Heisenberg cosets of the tensor product of a rank $n$ $\beta\gamma$ system with a rank $m$ $bc$ system where studied. These cosets were denoted by $C(n, m)$ and $C(2, 0)\cong \mathcal{W}_{-5/2}(\mathfrak{sl}_4, f_{\text{rect}})$ \cite[Remark 5.3]{CKLR} and $C(2, 2) \cong L_1(\mathfrak{sl}_{2|2})$ \cite[Theorem 5.5]{CKLR}. Moreover $C(0, 2)$ is nothing but the lattice VOA $L_1(\mathfrak{sl}_2)$ and so we have that 
$\mathcal{W}_{-5/2}(\mathfrak{sl}_4, f_{\text{rect}})  \subset \text{Com}\left(L_1(\mathfrak{sl}_2), L_1(\mathfrak{sl}_{2|2})\right)$ and by passing to the simple quotient $L_1(\mathfrak{psl}_{2|2})$ of $L_1(\mathfrak{sl}_{2|2})$ we also have $\mathcal{W}_{-5/2}(\mathfrak{sl}_4, f_{\text{rect}})  \subset \text{Com}\left(L_1(\mathfrak{sl}_2), L_1(\mathfrak{psl}_{2|2})\right)$. The branching rules \cite[Cor. 5.3]{Creutzig2017} and \cite[Rem. 9.11]{CGai} 
\begin{equation}\label{eq:decomp}
\mathcal{W}_{-5/2}(\mathfrak{sl}_4, f_{\text{rect}})  \cong \bigoplus_{n=0}^\infty L_{-1}(2m\omega)
\end{equation}
and
\[
L_1(\mathfrak{psl}_{2|2}) \cong \bigoplus_{n=0}^\infty L_{-1}(n\omega) \otimes \rho_{n\omega} \otimes L_1(\bar n\omega)
\]
 tell us that 
\[
\mathcal{W}_{-5/2}(\mathfrak{sl}_4, f_{\text{rect}})  \cong \text{Com}\left(L_1(\mathfrak{sl}_2), L_1(\mathfrak{psl}_{2|2})\right)^{U(1)}.  
\]
\item In \cite{CFL} a vertex superalgebra $Y(\lambda)$ that is related to $L_1(\mathfrak{d}(2, 1; -\lambda)) \otimes L_1(\mathfrak{psl}_{2|2})$ has been constructed. It satisfies
\[
Y(\lambda)^{\mathbb Z/2\mathbb Z} \cong \text{Com}\left(L_1(\mathfrak{sl}_2) \otimes L_1(\mathfrak{sl}_2)  , L_1(\mathfrak{d}(2, 1; -\lambda)) \otimes L_1(\mathfrak{psl}_{2|2})\right).
\]
\item 
$Y(\lambda)$ decomposes
\[
Y(\lambda) := \bigoplus_{ n, m=0}^\infty L_{k_1}(n\omega) \otimes L_{k_2}(n\omega) \otimes L_{-1}(m\omega) \otimes \rho_{m\omega} 
\]
as $L_{k_1}(\mathfrak{sl}_2) \otimes L_{k_2}(\mathfrak{sl}_2) \otimes L_{-1}(\mathfrak{sl}_2) \otimes SU(2)$-module for generic complex $\lambda$. Here $k_1=\lambda^{-1}-1, k_2=\lambda-1$ and note that for generic $\lambda$ the universal affine vertex operator algebras of $\mathfrak{sl}_2$  at levels $k_1$ and $k_2$ are simple. Then by \cite{CFL} and for irrational $\lambda$ we have that 
\[
V^{k_2}(\mathfrak n_4) \cong  \text{Com}\left(L_{k_1 -1}(\mathfrak{sl}_2), Y(\lambda)\right).
\]
\item Putting all these together we get 
\begin{equation}\nonumber
\begin{split}
(\mathcal{C}^{k_2})^{U(1)} &\cong  \text{Com}\left(L_{k_2}(\mathfrak{sl}_2) \otimes L_{k_1 -1}(\mathfrak{sl}_2), Y(\lambda)\right)^{U(1)} \\
&\cong  \text{Com}\left( L_{k_2}(\mathfrak{sl}_2) \otimes L_{k_1 -1}(\mathfrak{sl}_2), \bigoplus_{ n, m=0}^\infty L_{k_1}(n\omega) \otimes L_{k_2}(n\omega) \otimes L_{-1}(m\omega) \otimes \rho_{m\omega})\right)^{U(1)} \\
&\cong  \text{Com}\left( L_{k_1 -1}(\mathfrak{sl}_2), \bigoplus_{ m=0}^\infty L_{k_1}(\mathfrak{sl}_2)  \otimes L_{-1}(m\omega) \otimes \rho_{m\omega})\right)^{U(1)} \\
&\cong  \text{Com}\left( L_{k_1 -1}(\mathfrak{sl}_2), \bigoplus_{ m=0}^\infty L_{k_1}(\mathfrak{sl}_2)  \otimes L^{-1}(2m\omega))\right).
\end{split}
\end{equation}
which using \eqref{eq:decomp} and noticing that $k_2 = -\frac{k_1}{k_1+1}$ nicely compares to the Theorem.
\end{enumerate}
\end{rem}

We now present some consequences of the identification of $(\mathcal{C}^k)^{U(1)}$ with a quotient of $\mathcal{W}(c,\lambda)$. Recall from Section 10 of \cite{L17}, we can obtain coincidences between the simple quotient of $(\mathcal{C}^k)^{U(1)}$ with various other algebras arising as quotients of $\mathcal{W}(c,\lambda)$ by finding the intersection points on their truncation curves.

Recall that if we regard $\mathcal{C}^k$ as a one-parameter vertex algebra, with $k$ a formal variable, the specialization of $\mathcal{C}^k$ at a complex number $k = k_0$ need not coincide with the actual coset, but this can only fail when $k_0 + 2 \in \mathbb{Q}_{\leq 0}$. This property is inherited by the orbifold $(\mathcal{C}^k)^{U(1)}$ if we also omit the point $k_0 = 0$. By abuse of notation, in the results below, $(\mathcal{C}^{k_0})^{U(1)}$ will always refer to the specialization of the one-parameter vertex algebra $(\mathcal{C}^k)^{U(1)}$ at the point $k = k_0$, even if is strictly larger than the actual algebra $\text{Com}(V^{k_0}(\mathfrak{sl}_2),  V^{k_0}(\mathfrak{n}_4))^{U(1)}$. We also denote by $(\mathcal{C}_{k_0})^{U(1)}$ the simple quotient of $(\mathcal{C}^{k_0})^{U(1)}$.

The next result follows immediately from Theorem \ref{ckorbifoldcoinc} and Theorem 11.4 of \cite{L17}

\begin{thm}\label{thm:Wiso} For $n\geq 3$, aside from the critical levels $k = -2$ and $\ell = -n$, and the degenerate cases given by Theorem 10.1 of \cite{L17}, all isomorphism 
$(\mathcal{C}^k)^{U(1)}\cong \mathcal{W}_{\ell}(\mathfrak{sl}_n, f_{\text{prin}})$ appear on the following list.
\begin{equation} k = -\frac{1}{2} (n+2),\qquad k = -\frac{2 (n-1)}{n-2}, \qquad \ell =  -n + \frac{n-2}{n} ,\qquad \ell = -n + \frac{n}{n-2},\end{equation} which has central charge 
$\displaystyle c =-\frac{3 (n-1) (n+2)}{n-2}$.
\end{thm}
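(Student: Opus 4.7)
The plan is to deduce the theorem directly from the identification of $(\mathcal{C}^k)^{U(1)}$ as a one-parameter quotient of the universal two-parameter algebra $\mathcal{W}(c,\lambda)$, together with the classification of such quotients that are isomorphic to principal $\mathcal{W}$-algebras of type $A$. Recall that by the discussion preceding the theorem, $(\mathcal{C}^k)^{U(1)}$ is realized as $\mathcal{W}_I(c,\lambda)$ where $I = (\lambda + \tfrac{1}{16})$; this curve was pinned down by the explicit computation of the third-order self-OPE of the primary weight-$3$ generator $W^3$. On the other side, the principal $\mathcal{W}$-algebra $\mathcal{W}^\ell(\mathfrak{sl}_n, f_{\text{prin}})$ likewise appears as a one-parameter quotient of $\mathcal{W}(c,\lambda)$, cut out by a known polynomial truncation curve in $\mathbb{C}[c,\lambda]$ depending on $n$ and $\ell$.

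With both algebras realized as quotients of $\mathcal{W}(c,\lambda)$, isomorphisms between their simple quotients correspond exactly to intersection points of the associated curves, and this intersection has already been computed in Section 11 of \cite{L17}. Concretely, I would substitute $\lambda = -\tfrac{1}{16}$ into the principal $\mathcal{W}$-algebra truncation curve of \cite{L17}, obtaining a polynomial equation in $c$ and $\ell$ (for fixed $n$). Solving this equation in $\ell$ gives the two values $\ell = -n + \tfrac{n-2}{n}$ and $\ell = -n + \tfrac{n}{n-2}$; these are Feigin--Frenkel dual to each other under $\ell \mapsto -2n + n^2/(\ell+n) - n$, consistent with the symmetry of $\mathcal{W}^\ell(\mathfrak{sl}_n, f_{\text{prin}})$. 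Expressing $c$ instead in terms of $k$ via the Sugawara central-charge formula $c(\mathcal{C}^k) = c(V^k(\mathfrak{n}_4)) - c(V^k(\mathfrak{sl}_2))$ and then solving for $k$ gives the paired values $k = -\tfrac{1}{2}(n+2)$ and $k = -\tfrac{2(n-1)}{n-2}$, and direct substitution yields the stated common central charge $c = -3(n-1)(n+2)/(n-2)$.

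The only items that require separate verification are the exclusions. Since the one-parameter family $(\mathcal{C}^k)^{U(1)}$ may fail to coincide with the actual coset only when $k + 2 \in \mathbb{Q}_{\leq 0}$ or when $k = 0$, and since the degenerate coincidences listed in Theorem 10.1 of \cite{L17} correspond to further distinguished points on the curve $\lambda = -\tfrac{1}{16}$, I would simply check that the four values of $(k,\ell)$ above avoid the critical locus $\{k = -2,\ \ell = -n\}$ and the degenerate list for all $n \geq 3$; this is immediate from the explicit formulas.

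The main obstacle is not conceptual but notational: one must keep careful track of the conversion between the $(c,\lambda)$ parameters used in \cite{L17}, the level $k$ of $V^k(\mathfrak{n}_4)$, and the level $\ell$ of the principal $\mathcal{W}$-algebra, as well as the relation $k = -(\ell+1)/(\ell+2)$ coming from Theorem \ref{ckorbifoldcoinc}. Once these identifications are aligned, the theorem is an immediate specialization of Theorem 11.4 of \cite{L17}.
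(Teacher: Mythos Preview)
Your proposal is correct and follows essentially the same approach as the paper: the paper's proof consists of the single sentence that the result follows immediately from Theorem~\ref{ckorbifoldcoinc} together with Theorem~11.4 of \cite{L17}, and your argument simply unpacks what that citation entails (identifying $(\mathcal{C}^k)^{U(1)}$ with the curve $\lambda=-\tfrac{1}{16}$ and intersecting with the principal $\mathcal{W}$-algebra truncation curve). One small caution: the relation $k=-(\ell+1)/(\ell+2)$ from Theorem~\ref{ckorbifoldcoinc} involves a \emph{different} $\ell$ than the level of $\mathcal{W}^\ell(\mathfrak{sl}_n,f_{\text{prin}})$ in the present statement, so that formula is not actually needed here---only the identification of the truncation curve is.
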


Next, in the terminology of \cite{L17}, recall the {\it generalized parafermion algebra} 
$$\mathcal{G}^{\ell}(n) = \text{Com}(V^{\ell}(\mathfrak{gl}_n), V^{\ell}(\mathfrak{sl}_{n+1})),$$ and its simple quotient $\mathcal{G}_{\ell}(n)$. By Theorem 8.2 of \cite{L17}, this also arises a quotient of $\mathcal{W}(c,\lambda)$ and the corresponding truncation curve is given explicitly by (8.4) of \cite{L17}. Additionally, by (8.5) of \cite{L17}, this curve has the following rational parametrization using the level $\ell$ as parameter:
\begin{equation} \label{param:gp} \lambda(\ell)  = \frac{(n + \ell) (1 + n + \ell)}{(\ell-2 ) (2 n + \ell) (2 + 2 n + 3 \ell)},\qquad c(\ell) = \frac{n (\ell-1) (1 + n + 2 \ell)}{(n + \ell) (1 + n + \ell)} .\end{equation} 

\begin{thm} \label{genparacoincid} For $n \geq 3$, aside from the critical levels $k = -2$, $\ell = -n$, and $\ell = -n-1$, and the degenerate cases given by Theorem 10.1 of \cite{L17}, all isomorphisms $(\mathcal{C}_k)^{U(1)} \cong \mathcal{G}_{\ell}(n)$ appear on the following list.

\begin{enumerate}

\item  $\displaystyle k= n,\ k = - \frac{3 + 2 n}{n+2}, \qquad \ell =  - 2 (1 + n)$, \\ which has central charge 
$\displaystyle c = \frac{3 n (3 + 2 n)}{2 + n}$. 

\item $\displaystyle k = -\frac{n-3}{n-2},\ k = -\frac{n}{n-1},\qquad \ell =  - 2$,\\ which has central charge 
$\displaystyle c = -\frac{3n (n-3)}{(n-2) (n-1)}$.

\item  $\displaystyle k= \frac{1}{3} (n-3),\ k = - \frac{3 + 2 n}{3 + n},\qquad  \ell =  -\frac{2 n}{3}$,\\ which has central charge 
$\displaystyle c = \frac{(n-3) (3 + 2 n)}{3 + n}$.

\end{enumerate}
\end{thm}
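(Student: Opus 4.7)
The plan is to follow the strategy used for Theorem \ref{thm:Wiso}, reducing the question to an intersection-of-curves problem inside the universal two-parameter $\mathcal{W}_\infty$-algebra $\mathcal{W}(c,\lambda)$. By Theorem \ref{ckorbifoldcoinc} together with Theorem 6.2 of \cite{L17}, $(\mathcal{C}^k)^{U(1)}$ is a quotient of $\mathcal{W}(c,\lambda)$ cut out by the prime ideal $(\lambda + \tfrac{1}{16})$, while by Theorem 8.2 of \cite{L17} the family $\mathcal{G}^{\ell}(n)$ is a quotient whose truncation curve is rationally parametrized by \eqref{param:gp}. As in Theorem 10.1 of \cite{L17}, coincidences between the simple quotients of two such one-parameter families correspond, away from critical and degenerate loci, to intersection points of their truncation curves in the $(c,\lambda)$-plane.

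The first concrete step is to compute all intersections. Substituting $\lambda(\ell) = -\tfrac{1}{16}$ into \eqref{param:gp} and clearing denominators yields the cubic equation
\[
16(n+\ell)(1+n+\ell) + (\ell - 2)(2n+\ell)(2 + 2n + 3\ell) = 0,
\]
which I would check factors completely with roots $\ell = -2(n+1)$, $\ell = -2$, and $\ell = -\tfrac{2n}{3}$. These are the only candidates for coincidence on the $\mathcal{G}^{\ell}(n)$ side.

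Next, for each such $\ell$, I would compute the corresponding $c$-value from the central charge formula in \eqref{param:gp} and match it with the central charge of $(\mathcal{C}^k)^{U(1)}$. Using $c_{V^k(\mathfrak{n}_4)} = 6k$, which is readily read off from the OPEs in \eqref{OPEs} and is consistent with the entries of the table in Theorem \ref{THM: simple quotients of U(1)-orbifold}, together with $c_{V^k(\mathfrak{sl}_2)} = \tfrac{3k}{k+2}$, one obtains
\[
c_{(\mathcal{C}^k)^{U(1)}}(k) \;=\; \frac{3k(2k+3)}{k+2}.
\]
For each of the three values of $\ell$, the equation $c_{(\mathcal{C}^k)^{U(1)}}(k) = c(\ell)$ is a quadratic in $k$, and I expect it to produce exactly the three pairs listed in the statement, with matching central charges. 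For instance, at $\ell = -2(n+1)$ one computes $c(\ell) = 3n(2n+3)/(n+2)$, for which $k = n$ is manifestly a solution, and the second root must be $k = -(2n+3)/(n+2)$ by the product of roots; the remaining cases are analogous.

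The main obstacle is the standard subtlety of this type of argument: an intersection of truncation curves yields an isomorphism of simple quotients only on the complement of the critical and degenerate loci, where a one-parameter family may fail to specialize faithfully to its simple quotient. This is precisely the content of Theorem 10.1 of \cite{L17}, and it is what forces the statement to exclude the critical levels $k = -2$, $\ell = -n$, $\ell = -n-1$ and the additional degenerate cases from that theorem. Once those points are removed, the intersection argument of \cite{L17} directly produces the claimed isomorphisms.
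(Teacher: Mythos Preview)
Your approach is essentially the same as the paper's: realize both families as quotients of $\mathcal{W}(c,\lambda)$, and find coincidences by intersecting the line $\lambda = -\tfrac{1}{16}$ with the truncation curve for $\mathcal{G}^{\ell}(n)$ via the rational parametrization \eqref{param:gp}. Your computation of the three roots of the cubic in $\ell$ and the matching of $k$-values via the central charge formula $c(k) = \tfrac{3k(2k+3)}{k+2}$ is correct and matches the paper.

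There is, however, a genuine gap in the completeness direction. Solving $\lambda(\ell) = -\tfrac{1}{16}$ only detects candidate coincidences at values of $\ell$ where the rational parametrization \eqref{param:gp} is defined. The function $\lambda(\ell)$ has poles at $\ell = 2$, $\ell = -2n$, and $\ell = -\tfrac{2(n+1)}{3}$, and at those points $\mathcal{G}^{\ell}(n)$ is not obtained as a quotient of $\mathcal{W}(c,\lambda)$ via this parametrization, so your cubic equation says nothing about them. To conclude that \emph{all} isomorphisms are on the list, the paper checks these three pole values separately: at each of them $\mathcal{G}_{\ell}(n)$ has a singular vector in weight $3$, while $(\mathcal{C}_k)^{U(1)}$ does not have one at the corresponding central charge, so no additional coincidences occur. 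You need to add this check (or an equivalent argument) to close the gap. Also, the precise reference for the intersection-point argument in \cite{L17} is Corollary 10.2 rather than Theorem 10.1; the latter supplies the degenerate cases to be excluded, not the coincidence criterion itself.
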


\begin{proof} We first exclude the values $\ell =  2, \ -2n,\  -\frac{1}{3}(2m+2)$ which are poles of function $\lambda(\ell)$ given by \eqref{param:gp}. As explained in \cite{L17}, at these points, $\mathcal{G}^{\ell}(n)$ is not obtained as a quotient of $\mathcal{W}(c,\lambda)$ at these points. Note that the truncation curve for $(\mathcal{C}_k)^{U(1)}$ has parametrization $$\displaystyle c(k) = \frac{3 k (3 + 2 k)}{2 + k},\qquad \lambda = -\frac{1}{16},$$ and since the pole $k=-2$ has already been excluded, there are no additional points where $(\mathcal{C}_k)^{U(1)}$ cannot be obtained as a quotient of $\mathcal{W}(c,\lambda)$. By Corollary 10.2 of \cite{L17}, aside from the cases $c = 0,-2$, all remaining isomorphisms $(\mathcal{C}_k)^{U(1)} \cong \mathcal{G}_{\ell}(n)$ correspond to intersection points on the curves $V(K_m)$ and $V(I)$, where $K_m$ is given by (8.4) of \cite{L17}, and $I= (\lambda + \frac{1}{16})$, as above. For each $n \geq 2$, there are exactly three intersection points $(c, \lambda)$, namely,
$$  \bigg(\frac{3 n (3 + 2 n)}{2 + n}, -\frac{1}{16}\bigg),\qquad \bigg(-\frac{3n (n-3)}{(n-2) (n-1)}, -\frac{1}{16}\bigg),\qquad \bigg(\frac{(n-3) (3 + 2 n)}{3 + n}, -\frac{1}{16}\bigg).$$
It is immediate that the above isomorphisms all hold, and that our list is complete except for possible coincidences at the excluded points $\ell =  2, \ -2n,\  -\frac{1}{3}(2n+2)$.
 
At $\ell = 2$, $\mathcal{G}_{\ell}(n)$ has central charge $\displaystyle c = \frac{n (5 + n)}{(2 + n) (3 + n)}$ and the weight $3$ field is singular. However, the weight $3$ field in $(\mathcal{C}_k)^{U(1)}$ is not singular at this central charge, so there is no coincidence at this point. Similarly, at $\ell = - 2n$ and $\ell =  -\frac{1}{3}(2n+2)$, $\mathcal{G}_{\ell}(n)$ has central charge $\displaystyle c = \frac{(2n+1) (3n-1)}{n-1}$ and $\displaystyle c = \frac{n (2n+5)}{n-2}$, respectively, and has a singular vector in weight $3$, but at these central charges, $(\mathcal{C}_k)^{U(1)}$ does not. Therefore there are no additional coincidences at these points. \end{proof}

\begin{rem} \label{enriquesparafermions} The first family (1) in Theorem \ref{genparacoincid} is of particular interest since it concerns the case where $k$ is a positive integer $n$. By Remark \ref{rem:integer}, the map $V^n(\mathfrak{sl}_2) \rightarrow V^n(\mathfrak{n}_4)$ descends to a map of simple vertex algebras $L_{n}(\mathfrak{sl}_2) \rightarrow L_n(\mathfrak{n}_4)$. By Corollary 2.2 of \cite{ACKL17}, the coset $\text{Com}(L_{n}(\mathfrak{sl}_2) ,  L_n(\mathfrak{n}_4))$ is simple, and hence coincides with the simple quotient $\mathcal{C}_n$ of $\mathcal{C}^n$. Moreover, by \cite{doi:10.1155/S1073792896000566}, the simplicity of $\mathcal{C}_n$ is preserved by taking the $U(1)$-orbifold. It follows that for all $n\in \mathbb{N}$,
$$\text{Com}(L_{n}(\mathfrak{sl}_2) ,  L_n(\mathfrak{n}_4)) \cong \mathcal{G}_{ - 2 (1 + n)}(n).$$

In the case $n = 1$, note that $\mathcal{G}_{-4}(1)$ is just the parafermion algebra $N_{-4}(\mathfrak{sl}_2) = \text{Com}(\mathcal{H}, L_{-4}(\mathfrak{sl}_2))$. Therefore $L_1(\mathfrak{n}_4)$ may be regarded as an extension of $L_1(\mathfrak{sl}_2) \otimes N_{-4}(\mathfrak{sl}_2)$. Likewise, $L_1(\mathfrak{n}_4)^{\mathbb{Z}/2\mathbb{Z}}$, which is isomorphic to the global section algebra of the chiral de Rham complex of an Enriques surface, is an extension of $\mathcal{H} \otimes N_{-4}(\mathfrak{sl}_2)$, where $\mathcal{H}$ is the Heisenberg algebra generated by $J$.

Consider the coset $\text{Com}\left(V^{k+2}(\mathfrak{sl}_n) \otimes \mathcal H(1),  L_k(\mathfrak{sl}_{n+1}) \otimes \mathcal E(2n)\right)$ where $\mathcal E(2n)$ denotes the $bc$-system of rank $2n$ and  the Heisenberg algebra action is taken to be the diagonal one in such a way that this coset has four odd dimension $3/2$ fields. Its weight one subspace is $\mathcal H(1) \otimes L_n(\mathfrak{sl}_2)$ and if we specialize to $k=-2(n+1)$ then it is easy to check that the $\mathcal H(1)$ becomes central and so by uniqueness of minimal $\mathcal{W}$-superalgebras \cite[Thm. 3.1]{ACKL17} at this level the coset contains $L_n(\mathfrak{n}_4)$ as subalgebra. This fits into the observation of the first family (1) in Theorem \ref{genparacoincid} as this coset also obviously contains $\mathcal{G}_{ - 2 (1 + n)}(n)$.

This observation somehow extends to negative levels and thus connects Theorems \ref{thm:Wiso} and \ref{genparacoincid}. For this consider the coset $\text{Com}\left(V^{k-2}(\mathfrak{sl}_n) \otimes \mathcal H(1),  L_k(\mathfrak{sl}_{n|1}) \otimes \mathcal S(2n)\right)$. The rank $2n$ $\beta\gamma$-system $ \mathcal S(2n)$ carries an action of $V^{-n}(\mathfrak{sl}_2) \otimes V^{-2}(\mathfrak{sl}_n) \otimes \mathcal H(1)$ and in the commutant we choose the Heisenberg diagonally so that the coset has four dimension $3/2$ fields. As in the previous case the weight one subspace is $\mathcal H(1) \otimes V^{-n}(\mathfrak{sl}_2)$ and if we specialize to $k=-2(n-1)$ then it is easy to check that the $\mathcal H(1)$ becomes central and so by uniqueness of minimal $\mathcal{W}$-superalgebras \cite[Thm. 3.1]{ACKL17} at this level the coset contains a homomorphic image of $V^{-n}(\mathfrak{n}_4)$ as subalgebra. This coset also contains $\mathcal {SG}^k(n):=\text{Com}\left(V^{k}(\mathfrak{sl}_n) \otimes \mathcal H(1),  L_k(\mathfrak{sl}_{n|1}) \right)$ as subalgebra, and its central charge for $k=-2(n-1)$ is precisely $\displaystyle -\frac{3(2n^2-3n)}{n-2}$, which is the central charge of
$(\mathcal{C}_{-n})^{U(1)}\cong\mathcal{W}_{\ell}(\mathfrak{sl}_{2(n-1)}, f_{\text{prin}})$  at $\displaystyle \ell= -2(n-1) +\frac{n-2}{n-1}$. 
This observation actually leads us to a new level-rank duality that we will introduce in section \ref{sec:levelrank}.
\end{rem}

There is another interesting family of vertex algebras that arise as quotients of $\mathcal{W}(c,\lambda)$, namely, the cosets
$$\mathcal{D}^{\ell}(n) = \text{Com}(\mathcal{H}, \mathcal{W}^{\ell}(\mathfrak{sl}_n, f_{\text{subreg}});$$ see 
Corollary 6.15 of \cite{CL20}. The explicit truncation curve can be obtained by specializing Theorem 6.1 of \cite{CL20} to the case $m=1$, also appears as Conjecture 9.5 of \cite{L17}. We obtain

\begin{thm} For $n \geq 3$, aside from the critical levels $k = -2$ and $\ell = -n$, and the degenerate cases given by Theorem 10.1 of \cite{L17}, all isomorphisms $(\mathcal{C}_k)^{U(1)} \cong \mathcal{D}_{\ell}(n)$ appear on the following list.

\begin{enumerate}

\item $\displaystyle k = -\frac{n}{1 + n},\ k = - \frac{3 + n}{2 + n},  \qquad \ell = -n + \frac{2 + n}{1 + n}$, which has central charge 
$\displaystyle c =-\frac{3 n (3 + n)}{(1 + n) (2 + n)}$.

\item $\displaystyle k = -n,\ k =  \frac{3 - 2 n}{-2 + n}, \qquad \ell =  -n + \frac{n-2}{n-1}$, which has central charge 
$\displaystyle c = -\frac{3 n (2n-3)}{n-2}$.

\item $\displaystyle k = - \frac{1}{3} (3+ n),\ k =  \frac{3 - 2 n}{n-3},\qquad \ell =  -n + \frac{n}{n-3}$, which has central charge 
$\displaystyle c =-\frac{(3 + n) (2n-3)}{n-3}$.

\end{enumerate}
\end{thm}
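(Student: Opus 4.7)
The plan is to mimic the proof of Theorem \ref{genparacoincid}. Both $(\mathcal{C}^k)^{U(1)}$ and $\mathcal{D}^{\ell}(n)$ are one-parameter quotients of the universal two-parameter $\mathcal{W}_{\infty}$-algebra $\mathcal{W}(c,\lambda)$, so by Corollary 10.2 of \cite{L17} the isomorphisms between their simple quotients (away from $c=0,-2$) correspond exactly to the intersection points of their truncation curves in the $(c,\lambda)$-plane. The truncation curve for $(\mathcal{C}^k)^{U(1)}$ is the horizontal line $V(I)=V(\lambda+\tfrac{1}{16})$ with $c(k)=\frac{3k(3+2k)}{2+k}$, while the truncation curve $V(J_n)$ for $\mathcal{D}^{\ell}(n)$ is given by setting $m=1$ in Theorem 6.1 of \cite{CL20} (equivalently, Conjecture 9.5 of \cite{L17}), and it admits a rational parametrization in the level $\ell$.

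The first step is to substitute $\lambda=-\tfrac{1}{16}$ into the defining equation of $V(J_n)$; this produces a polynomial equation in $c$ whose roots give the three intersection central charges $c_1(n),c_2(n),c_3(n)$. For each $c_i(n)$, the equation $c(k)=c_i(n)$ is a quadratic in $k$ whose two solutions yield the pair of $k$-values listed in family $(i)$, and the equation $c(\ell)=c_i(n)$ on the $\mathcal{D}^{\ell}(n)$ side yields the single stated $\ell$-value. I would then verify by direct substitution that the three pairs displayed in $(1)$--$(3)$ have the claimed central charges and satisfy both truncation equations.

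The remaining task is to check that no additional isomorphisms appear at the excluded points. On the $(\mathcal{C}^k)^{U(1)}$ side, the only pole of the one-parameter specialization is $k=-2$, which is already removed. On the $\mathcal{D}^{\ell}(n)$ side, one must, as in the proof of Theorem \ref{genparacoincid}, identify the finitely many values of $\ell$ at which the parametrization $\lambda(\ell)$ has a pole so that $\mathcal{D}^{\ell}(n)$ is not obtained as a quotient of $\mathcal{W}(c,\lambda)$, compute the central charge of $\mathcal{D}_{\ell}(n)$ at each such $\ell$, and then compare singular-vector data in weight $3$ to show that no hidden coincidence arises. The critical level $\ell=-n$ is excluded from the outset, and the degenerate list of Theorem 10.1 of \cite{L17} is imported verbatim.

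I expect the main obstacle to be the algebraic manipulation in the first step: the polynomial in $c$ obtained by substituting $\lambda=-\tfrac{1}{16}$ into the $m=1$ truncation curve must be shown to factor over $\mathbb{Q}(n)$ into precisely the three expressions in the theorem, and pulling these central charges back through the quadratic $c(k)=\frac{3k(3+2k)}{2+k}$ to match the stated pairs of $k$-values requires careful bookkeeping. Once these identifications are made, the exclusion argument at the remaining degenerate points is a routine singular-vector computation entirely parallel to the end of the proof of Theorem \ref{genparacoincid}.
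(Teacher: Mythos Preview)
Your proposal is correct and follows exactly the approach the paper intends: the paper states that the proof is similar to the proof of Theorem \ref{genparacoincid} and is left to the reader, and your outline reproduces precisely that argument (intersecting the line $\lambda=-\tfrac{1}{16}$ with the truncation curve for $\mathcal{D}^{\ell}(n)$ from the $m=1$ case of Theorem 6.1 of \cite{CL20}, then handling the poles of $\lambda(\ell)$ separately).
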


The proof is similar to the proof of Theorem \ref{genparacoincid}, and is left to the reader.

\section{Level-rank dualities}\label{sec:levelrank}

We will now explain that the  central charge agreement observed in Remark \ref{enriquesparafermions}  is not a coincidence and it fits into the following bigger picture. 
Firstly, the central charge of the cosets $\mathcal {SG}^{-m}(n)$ and of $\text{Com}\left(V^{-n+1}(\mathfrak{sl}_m),  L_{-n}(\mathfrak{sl}_{m}) \otimes L_1(\mathfrak{sl}_m) \right)$ are both equal to
\[
c = m-1 -\frac{m(m^2-1)}{(m-n)(m-n+1)}.
\]
On the other hand recall that the simple quotient of the coset $$\text{Com}\left(V^{-n+1}(\mathfrak{sl}_m),  L_{-n}(\mathfrak{sl}_{m}) \otimes L_1(\mathfrak{sl}_m) \right)$$
is isomorphic to $\mathcal{W}_{\ell}(\mathfrak{sl}_{m}, f_{\text{prin}})$ at level $\ell = -m +\frac{m-n}{m-n+1}$ by the main Theorem of \cite{ACL19}.

We can be more general, namely consider now 
\[
\mathcal {SG}^{-m}(n|r):=\text{Com}\left(V^{-m}(\mathfrak{sl}_n) \otimes V^{m}(\mathfrak{sl}_r)\otimes  \mathcal H(1),  L_{-m}(\mathfrak{sl}_{n|r}) \right)
\]
together with
\[
\text{Com}\left(V^{-n+r}(\mathfrak{sl}_m),  L_{-n}(\mathfrak{sl}_{m}) \otimes L_r(\mathfrak{sl}_m) \right)
\]
and again there central charges turn out to coincide, i.e. they are equal to 
\[
c = \frac{(m^2-1)nr(n-r-2m)}{(m-n)(m+r)(m+r-n)}.
\]

This observation can be lifted to a new type of level-rank duality. 
For this consider $\mathcal E(mn) \otimes \mathcal S(\ell n)$ and recall that we denote by $\mathcal E(m)$ the $bc$-system of rank $m$ and by $\mathcal S(m)$ the $\beta\gamma$-system of rank $m$. The vertex superalgebra $\mathcal E(mn) \otimes \mathcal S(\ell n)$ is viewed as the $bc\beta\gamma$-system for $\mathbb C^{n} \otimes \mathbb C^{m|\ell}$, i.e. for the tensor product of the standard representations of $\mathfrak{gl}_n$ and $\mathfrak{sl}_{m|\ell}$. It thus carries a commuting action of $V^{m-\ell}(\mathfrak{gl}_n)\cong V^{m-\ell}(\mathfrak{sl}_n) \otimes \mathcal H(1)$ and $V^n(\mathfrak{sl}_{m|\ell})$.  We normalize the Heisenberg field to have norm one, so that the $b, c, \beta$ and $\gamma$ all have Heisenberg weight $\mu=\frac{1}{\sqrt{n(m-\ell)}}$.
The conformal weight $\Delta$ of the module $V^{m-\ell}(\omega_1) \otimes \pi_\mu \otimes V^n(\omega_1)$ of  $V^{m-\ell}(\mathfrak{sl}_n) \otimes \mathcal H(1) \otimes V^n(\mathfrak{sl}_{m|\ell})$ is
\[
\Delta = \frac{(n^2-1)}{2(n+m-\ell)} + \frac{1}{2n(m-\ell)} +  \frac{((m-\ell)^2-1)}{2(n+m-\ell)} = \frac{1}{2}
\]
so that \cite[Corollary 2.2]{AKFPP} applies, i.e. there is a conformal embedding of $V^{m-\ell}(\mathfrak{sl}_n) \otimes \mathcal H(1) \otimes V^n(\mathfrak{sl}_{m|\ell})$ in $\mathcal E(mn) \otimes \mathcal S(\ell n)$. We set
\[
A^n(\mathfrak{sl}_{m|\ell}) := \text{Com}\left(V^{m-\ell}(\mathfrak{gl}_n), \mathcal E(mn) \otimes \mathcal S(\ell n) \right)
\]
and if $m=0$, then we write $A^{-n}(\mathfrak{sl}_{\ell})$ for $A^n(\mathfrak{sl}_{0|\ell})$.
For descriptions of some of the cosets of these types see \cite{LSS}. We also need
 \cite[Thm. 4.1]{OstrikSun}, i.e. 
\begin{equation}
\begin{split}
\text{Com}\left(L_n(\mathfrak{gl}_m), \mathcal E(mn) \right) &\cong L_m(\mathfrak{sl}_n).
\end{split}
\end{equation}
With this notation and information we can slightly modify  the argument of \cite[Thm. 13.1]{ACL19} to get
\begin{equation}\nonumber
\begin{split}
&\text{Com}\left(V^{-n+r}(\mathfrak{sl}_m), A^{-n}(\mathfrak{sl}_{m}) \otimes L_r(\mathfrak{sl}_m) \right)\cong \\
&\qquad\qquad\qquad\cong \text{Com}\left(V^{-n+r}(\mathfrak{sl}_m),  \text{Com}\left(V^{-m}(\mathfrak{gl}_n) \otimes L_m(\mathfrak{gl}_r), \mathcal S(mn) \otimes \mathcal E(mr)\right)   \right)\\
&\qquad\qquad\qquad\cong  \text{Com}\left(V^{-n+r}(\mathfrak{sl}_m) \otimes V^{-m}(\mathfrak{gl}_n) \otimes L_m(\mathfrak{gl}_r), \mathcal S(mn) \otimes \mathcal E(mr)\right)  \\
&\qquad\qquad\qquad\cong  \text{Com}\left(V^{-n+r}(\mathfrak{gl}_m) \otimes V^{-m}(\mathfrak{sl}_n) \otimes L_m(\mathfrak{sl}_r) \otimes \mathcal H(1), \mathcal S(mn) \otimes \mathcal E(mr)\right) \\
&\qquad\qquad\qquad\cong  \text{Com}\left(  V^{-m}(\mathfrak{sl}_n) \otimes L_m(\mathfrak{sl}_r) \otimes \mathcal H(1), \text{Com}\left( V^{-n+r}(\mathfrak{gl}_m), \mathcal S(mn) \otimes \mathcal E(mr)\right)\right) \\
&\qquad\qquad\qquad\cong  \text{Com}\left(  V^{-m}(\mathfrak{sl}_n) \otimes L_m(\mathfrak{sl}_r) \otimes \mathcal H(1), A^m(\mathfrak{sl}_{r|n})\right)
\end{split}
\end{equation}
We thus have proven the level-rank duality theorem
\begin{thm} 
\label{thm:levelrank}
Let $r, n, m $ be positive integers. Then there exist vertex algebra extensions $A^{-n}(\mathfrak{sl}_{m})$ and $A^m(\mathfrak{sl}_{r|n})$ of homomorphic images $\widetilde V^{-n}(\mathfrak{sl}_{m})$ and $\widetilde V^m(\mathfrak{sl}_{r|n})$ of $V^{-n}(\mathfrak{sl}_{m})$ and $V^m(\mathfrak{sl}_{r|n})$
such that 
 the level-rank duality
\[
\text{Com}\left(V^{-n+r}(\mathfrak{sl}_m), A^{-n}(\mathfrak{sl}_{m}) \otimes L_r(\mathfrak{sl}_m) \right)\cong
\text{Com}\left(  V^{-m}(\mathfrak{sl}_n) \otimes L_m(\mathfrak{sl}_r) \otimes \mathcal H(1), A^m(\mathfrak{sl}_{r|n})\right)
\]
holds.
\end{thm}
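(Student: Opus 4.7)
The plan is to construct the extensions $A^{-n}(\mathfrak{sl}_m)$ and $A^m(\mathfrak{sl}_{r|n})$ as commutants inside suitable $bc\beta\gamma$-systems, and then derive the isomorphism by a chain of commutant manipulations in the spirit of \cite[Thm.~13.1]{ACL19}. The starting object is $\mathcal{E}(mn)\otimes \mathcal{S}(\ell n)$, viewed as the $bc\beta\gamma$-system for $\mathbb{C}^n\otimes\mathbb{C}^{m|\ell}$, which carries commuting actions of $V^{m-\ell}(\mathfrak{gl}_n)\cong V^{m-\ell}(\mathfrak{sl}_n)\otimes\mathcal{H}(1)$ and $V^n(\mathfrak{sl}_{m|\ell})$. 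A direct conformal-weight computation shows that the natural minimal modules $V^{m-\ell}(\omega_1)\otimes\pi_\mu\otimes V^n(\omega_1)$ have conformal weight $\tfrac{1}{2}$, matching the weight of $b,c,\beta,\gamma$; hence by \cite[Cor.~2.2]{AKFPP} one obtains a conformal embedding of $V^{m-\ell}(\mathfrak{sl}_n)\otimes\mathcal{H}(1)\otimes V^n(\mathfrak{sl}_{m|\ell})$ into $\mathcal{E}(mn)\otimes\mathcal{S}(\ell n)$. Taking the commutant of $V^{m-\ell}(\mathfrak{gl}_n)$ defines $A^n(\mathfrak{sl}_{m|\ell})$ and, by construction, makes it an extension of a homomorphic image of $V^n(\mathfrak{sl}_{m|\ell})$; specialising to $\ell=0$ yields $A^{-n}(\mathfrak{sl}_m)$ and relabelling gives $A^m(\mathfrak{sl}_{r|n})$.

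Next, I would use the classical level-rank duality of Ostrik-Sun \cite{OstrikSun}, namely
\[
\mathrm{Com}\bigl(L_n(\mathfrak{gl}_m),\mathcal{E}(mn)\bigr)\cong L_m(\mathfrak{sl}_n),
\]
to replace $L_r(\mathfrak{sl}_m)$ on the left-hand side with the commutant of $L_m(\mathfrak{gl}_r)$ inside $\mathcal{E}(mr)$. This rewrites the left-hand side as a commutant inside $\mathcal{S}(mn)\otimes\mathcal{E}(mr)$.

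After that the argument becomes bookkeeping with commutants. Using the standard identity $\mathrm{Com}(A,\mathrm{Com}(B,V))\cong\mathrm{Com}(A\otimes B,V)$, I would combine $V^{-n+r}(\mathfrak{sl}_m)$, $V^{-m}(\mathfrak{gl}_n)$ and $L_m(\mathfrak{gl}_r)$ into a single subalgebra whose commutant is taken in $\mathcal{S}(mn)\otimes\mathcal{E}(mr)$, then split off a diagonal Heisenberg so as to replace $V^{-n+r}(\mathfrak{sl}_m)\otimes V^{-m}(\mathfrak{gl}_n)\otimes L_m(\mathfrak{gl}_r)$ by $V^{-n+r}(\mathfrak{gl}_m)\otimes V^{-m}(\mathfrak{sl}_n)\otimes L_m(\mathfrak{sl}_r)\otimes\mathcal{H}(1)$. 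One more application of the commutant identity lets me pull $V^{-n+r}(\mathfrak{gl}_m)$ inside, producing $\mathrm{Com}\bigl(V^{-n+r}(\mathfrak{gl}_m),\mathcal{S}(mn)\otimes\mathcal{E}(mr)\bigr)=A^m(\mathfrak{sl}_{r|n})$, and the right-hand side of the theorem emerges.

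The expected main obstacle is the Heisenberg bookkeeping: several Heisenberg subalgebras arise (one from each $\mathfrak{gl}\cong\mathfrak{sl}\oplus\mathbb{C}$ splitting, plus the diagonal one responsible for the conformal embedding), and I must verify that the diagonal direction used to separate $\mathfrak{sl}_m$ from $\mathfrak{gl}_m$ is consistent with the Heisenberg direction fixed by the $bc\beta\gamma$ normalisation $\mu=1/\sqrt{n(m-\ell)}$, so that exactly one copy of $\mathcal{H}(1)$ survives in the final commutant. The existence of commuting actions (and hence the validity of reshuffling commutants) is a formal consequence of the conformal embedding, so no additional strong-generation analysis should be needed.
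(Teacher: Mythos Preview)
Your proposal is correct and follows essentially the same route as the paper: define the extensions as commutants of $V^{m-\ell}(\mathfrak{gl}_n)$ inside $\mathcal{E}(mn)\otimes\mathcal{S}(\ell n)$, invoke the conformal embedding via \cite[Cor.~2.2]{AKFPP}, use Ostrik--Sun to rewrite $L_r(\mathfrak{sl}_m)$ as a commutant in $\mathcal{E}(mr)$, and then run the same chain of commutant/Heisenberg reshufflings (modelled on \cite[Thm.~13.1]{ACL19}) to arrive at the right-hand side. One small slip: $A^{-n}(\mathfrak{sl}_m)$ arises from the specialisation $m\to 0$ (pure $\beta\gamma$ case, then relabel $\ell\mapsto m$), not $\ell\to 0$; otherwise your outline matches the paper's argument step for step.
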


\begin{rem}\label{rem:levelrank}
It is natural to ask if the statement of the Theorem can be improved, i.e. one could ask for a level-rank duality of the form
\[
\text{Com}\left(V^{-n+r}(\mathfrak{sl}_m), \widetilde  V^{-n}(\mathfrak{sl}_{m}) \otimes L_r(\mathfrak{sl}_m) \right)\overset{??}{\cong}
\text{Com}\left(  V^{-m}(\mathfrak{sl}_n) \otimes L_m(\mathfrak{sl}_r) \otimes \mathcal H(1), \widetilde  V^m(\mathfrak{sl}_{r|n})\right).
\]
\end{rem}

\begin{rem}
One might wonder if there are other levels $k$ for which the coset\newline  $\text{Com}\left(V^{k}(\mathfrak{sl}_n) \otimes V^{-k}(\mathfrak{sl}_r)\otimes  \mathcal H(1), V^{k}(\mathfrak{sl}_{n|r}) \right)$
coincides with a $\mathcal{W}$-algebra and indeed there are indications that these cosets sometimes coincide with rectangular $\mathcal{W}$-algebras of type $A$ \cite[Appendix D]{CH}.
\end{rem}

%--------------------------------------------------------------------------------------------------------%
\appendix
\section{Decoupling relations}
\label{sec:app}
%--------------------------------------------------------------------------------------------------------%
For the convenience of the reader we repeat the chosen conventions. 
\begin{align*}
	Q^+ 	&=G^{-,2} 
	\quad&
	Q^- 	&=G^{+,1} \quad&
	G^+ 	&=G^{+,2} 
	\quad&
	G^- 	&=G^{-,1} \\
	U_{n,0} &=\ :\partial^nJ^+J^-: 
	\quad&
	V_{n,0} &=\ :\partial^nG^{+}G^{-}: \quad&
	A_{n,0} &=\ :\partial^nJ^+G^{-}: 
	\quad&
	B_{n,0} &=\ :\partial^nJ^-G^{+}: 
\end{align*}
\begin{equation*}
\Sigma^{(0)\pm}_{2n,0}=\ :\partial^{2n}J^{\pm}J^{\pm}:, \qquad \Sigma^{(1)\pm}_{n,0}=:\partial^nJ^{\pm}G^{\pm}: , \qquad \Sigma^{(2)\pm}_{2n+1,0}=\ :\partial^{2n+1}G^{\pm}G^{\pm}:
\end{equation*}
% \begin{align*}
% U_{n,0} &\stackrel{\mathclap{\normalfont\mbox{\footnotesize{def}}}}{=}:\partial^nJ^+J^-:& \\
% A_{n,0} &\stackrel{\mathclap{\normalfont\mbox{\footnotesize{def}}}}{=}:\partial^nJ^+G^{-,1}:& \\
% B_{n,0} &\stackrel{\mathclap{\normalfont\mbox{\footnotesize{def}}}}{=}:\partial^nJ^-G^{+,2}:& \\
% V_{n,0} &\stackrel{\mathclap{\normalfont\mbox{\footnotesize{def}}}}{=}:\partial^nG^{+,2}G^{-,1}:& 
% \end{align*}
All relations below were verified by computer \cite{doi:10.1142/S0129183191001001}. Note that applying the automorphism $\theta$ specified in the proof of Theorem \ref{THM: Main Theorem. minimal strong generators of cyclic orbifold Z/2Z} on the decoupling relation for $\Sigma^{(1)+}_{2,0}$ (see \ref{Appendix: Decoupling relation for Sigma^(1+)_(2,0)}) yields a decoupling relation for $\Sigma^{(1)-}_{2,0}$ that holds at all levels $k\neq4$.
% \subsection*{Decoupling relations}
\begin{equation}
\label{Appendix: Decoupling relation for U_{3,0}}
	\begin{split}
		0=&\frac{k}{3}U_{3,0}-\frac{k}{2}V_{2,0}+\partial U_{2,0} -\frac{1}{2}:U_{2,0}J: +\left(1+k\right)\partial V_{1,0} +:V_{1,0}J: \\
		&-:A_{1,0} Q^+: -:B_{1,0} Q^-: -\frac{1}{2}\partial^2U_{1,0} +:U_{1,0}T: -\frac{1}{2}:U_{1,0}\partial J: \\
		&+ \frac{1-k}{2}\partial^2 V_{0,0} -:\partial V_{0,0}J: -:V_{0,0}U_{0,0}: +:\partial A_{0,0}Q^+: 
		-:A_{0,0}B_{0,0}: \\
		&+:A_{0,0}\partial Q^+: +:\partial B_{0,0}Q^-: +:B_{0,0}\partial Q^-: 
		+\frac{1}{6}\partial^3U_{0,0} +:U_{0,0}\partial T: \\
		&+\frac{1+k}{6}\partial^3T +\frac{1}{2}:\partial^2TJ: -\frac{1}{2}:\partial^2Q^+ Q^-: 
		-:\partial Q^+\partial Q^-:\\ 
		&-\frac{1}{2}:Q^+\partial^2 Q^-: +\frac{3+k}{24}\partial^4 J +\frac{1}{12}:\partial^3JJ: .
	\end{split}
\end{equation}

\begin{equation}
\label{Appendix: Decoupling relation for Sigma^(1+)_(2,0)}
	\begin{split}
		0=&(4 - k) \Sigma^{(1)+}_{2,0} - 6 :J \Sigma^{(1)+}_{1,0}: - 2 :U_{0,0} \Sigma^{(1)+}_{0,0}: 
		- 2 :\partial J \Sigma^{(1)+}_{0,0}:\\ &+ 2 :\Sigma^{(0)+}_{0,0} B_{0,0}: + :\partial \Sigma^{(0)+}_{0,0} Q^+: 
	\end{split}
\end{equation}

\begin{equation}
\label{Appendix: Decoupling relation for U_(2,0)}
	\begin{split}
		0=&(16 - k) U_{2,0} - (8 + k) \partial U_{1,0} - 6 :J U_{1,0}: + \frac{2 + k}{2} \partial^2U_{0,0}  
		+ :J\partial U_{0,0}:\\&- :U_{0,0} U_{0,0}:- :\partial J U_{0,0}: - \frac{k}{6} \partial^3J 
		- \frac{1}{2} :\partial^2 J J: + :\Sigma^{(0)+}_{0,0}\Sigma^{(0)-}_{0,0}: = 0 
	\end{split}
\end{equation}

\begin{equation}
\label{Appendix: Decoupling relation for A_(2,0)}
	\begin{split}
		0=&\frac{16 - k}{2}A_{2,0} - 3:J A_{1,0}: - :U_{0,0} A_{0,0}: - :\partial J A_{0,0}: + 3 :U_{1,0} Q^-: \\
		&- :\partial U_{0,0} Q^-: + \frac{1}{2} :\partial^2J Q^-: + :\Sigma^{(0)+}_{0,0} \Sigma^{(1)-}_{0,0}:
	\end{split}
\end{equation}

\begin{equation}
\label{Appendix: Decoupling relation for B_(2,0)}
	\begin{split}
		0&=\frac{16 - k}{2} B_{2,0} + 2 (k-4) \partial B_{1,0} + 3 :J B_{1,0}: + (2 - k) \partial^2 B_{0,0} - 2 :J \partial B_{0,0}:\\
		& - :U_{0,0} B_{0,0}: + \frac{k-1}{3} \partial^3 Q^+ + :J \partial^2 Q^+: + 2 :U_{0,0} \partial Q^+: + 3 :U_{1,0} Q^+: \\
		&+ :\Sigma^{(1)+}_{0,0} \Sigma^{(0)-}_{0,0}: 
	\end{split}
\end{equation}

\begin{equation}
\label{Appendix: Decoupling relation for V^(2,0) Part 1}
	\begin{split}
		&k\left( (6 - k) V_{2,0} - :A_{0,0} \partial Q^+: + :\partial B_{0,0} Q^-:- \frac{\partial^2 Q^+ Q^-:}{2} : + :\Sigma^{(1)+}_{0,0} \Sigma^{(1)-}_{0,0}:  \right) \\
		& + (k-4) \left( \partial U_{2,0} + :\partial T U_{0,0}:+ \frac{(k+1)}{6}\partial^3T - :J \partial V_{0,0}:-:A_{0,0} B_{0,0}: \right.\\ &\left.+ \frac{1}{12} :\partial^3J J:+ \frac{1}{2} :\partial^2 T J:\right)- 4 :J V_{1,0}: + \frac{20-13 k}{2} \partial^2 U_{1,0}- 4 :T U_{1,0}: \\
		&+(2-k) \left( :J U_{2,0}:+:\partial J U_{1,0}: + 2:U_{0,0} V_{0,0}:\right) + (k^2- 5 k - 4) \partial V_{1,0} \\ 
		&+ \frac{4 + 4k - k^2}{2} \partial^2 V_{0,0}  + \frac{7k -10}{3} \partial^3 U_{0,0}   + (4 - 3 k) (:A_{1,0} Q^+: +  :B_{1,0} Q^-: )\\
		&+ \frac{k^2 - 26 k + 32}{24} \partial^4J =0\\
	\end{split}
\end{equation}

\begin{equation}
\label{Appendix: Decoupling relation for V^(2,0) Part 2}
	\begin{split}
		&(32 + 3 k) \left( \frac{k}{2} V_{2,0} - (1 + k)\left(  \partial V_{1,0} - \frac{\partial^2V_{0,0}}{2} + \frac{\partial^3T}{6}  \right) - :J V_{1,0}:\right.\\
		&\left. - :T U_{1,0}: + :J \partial V_{0,0}: + :U_{0,0} V_{0,0}: + :B_{1,0} Q^-: + :A_{0,0} B_{0,0}:\right. \\
		&\left. - \frac{1}{2} :\partial^2 T J: +  :A_{1,0} Q^+: - \frac{5}{6} \partial^3 U_{0,0} -  :\partial T U_{0,0}:\right)+ \frac{32 - 7 k}{2} :J U_{2,0}: \\
		&- (32 + 11 k + 2 k^2) \partial U_{2,0}  + \frac{160 + 19 k + 2 k^2}{2} \partial^2U_{1,0} + \frac{32 - k}{2} :\partial J U_{1,0}: \\
		&+ k\left( k :J \partial U_{1,0}: - 2 :U_{0,0} U_{1,0}: -  :\partial^2J U_{0,0}: +  :\partial\Sigma^{(0)+}_{0,0} \Sigma^{(0)-}_{0,0}: \right)  \\
		& - \frac{(8 + k) (5 k-32)}{24} \partial^4 J  \frac{32 + 7 k}{12} :\partial^3J J:  = 0
	\end{split}
\end{equation}

\section{Singular fields in $V^k(\mathfrak{n}_4)^{U(1)}$}
In this Appendix as well as the next one, the symbol $\propto$ is used to indicate equality up to multiplication by a non-zero constant over the base field which is assumed to be $\mathbb{C}$ throughout the article. 
\label{app: singular fields in U(1)-orbifold}

\subsection*{Level ${k=-\frac{5}{2}}$} 
	\begin{equation}
		\begin{split}
			U_{2,0} - V_{1,0} + \frac{1}{2} \partial V_{0,0} - :J V_{0,0}: - \partial U_{1,0} + \frac{3}{4} \partial^2 U_{0,0} - :U_{0,0} U_{0,0}: \\
			- :T U_{0,0}: + :\partial J U_{0,0}: - \frac{1}{2} :J J U_{0,0}: - \frac{5}{8} \partial^2T + \frac{3}{4} :T T: \\
			+ \frac{1}{2}:T \partial J:	- \frac{1}{4} :T J J:- \frac{1}{8} \partial^3J - \frac{1}{8} :\partial J \partial J: + \frac{1}{4} :\partial J J J: \\
			- \frac{1}{16}:J J J J:	+ \frac{3}{2}:\partial Q^+ Q^-:	- \frac{3}{2}:Q^+ \partial Q^-: + :J Q^+ Q^-: \\
			- 2 :Q^+ A_{0,0}: + 2 :Q^- B_{0,0}:
		\end{split}
	\end{equation}
\subsection*{Level ${k=-\frac{3}{2}}$} 
	\begin{equation}
		4 U_{0,0} - 2 T - 2 \partial J + :JJ:
	\end{equation}
	\begin{equation}
		-4 U_{1,0} + 6 \partial U_{0,0} + 4 :J U_{0,0}: - 3 \partial T - 4 :T J: + :J J J: + 2 :Q^+Q^-:
	\end{equation}
	% \begin{equation}
	% 	\begin{split}
	% 		U_{2,0} + V_{1,0} - :J U_{1,0}: - \partial^2U_{0,0} + :J \partial U_{0,0}: \\
	% 		+ 2 :T U_{0,0}: - :\partial J U_{0,0}: + :Q^+ A_{0,0}:
	% 	\end{split}
	% \end{equation}
\subsection*{Level ${k=-\frac{4}{3}}$} 
	\begin{equation}
		\begin{split}
			\frac{1}{2}U_{2,0} + \frac{2}{3} V_{1,0} - \frac{1}{3} \partial V_{0,0} - \frac{1}{2} :J V_{0,0}:- \frac{1}{2} \partial U_{1,0} - \frac{1}{2} \partial^2U_{0,0} \\
			- 4 :U_{0,0} U_{0,0}:+ \frac{16}{3} :T U_{0,0}: + 4 :\partial J U_{0,0}: - 2 :J J U_{0,0}: + \frac{5}{12}\partial^2 T\\
			- \frac{23}{18} :T T:- \frac{8}{3} :T \partial J: + \frac{4}{3} :T J J: + \frac{37}{24} \partial^3 J- \frac{35}{12} :\partial^2JJ: \\
			- \frac{11}{8} :\partial J\partial J: + \partial J J J:	- \frac{1}{4}:J J J J: + \frac{4}{3}:\partial Q^+ Q^-: \\
			- \frac{4}{3} :Q^+\partial Q^-:	+ \frac{1}{2}:JQ^+Q^-: - :Q^+A_{0,0}: + :Q^- B_{0,0}:
		\end{split}
	\end{equation}
\subsection*{Level ${k=-\frac{2}{3}}$} 
	\begin{equation}
		\begin{split}
			-\frac{200}{27} V_{2,0} - \frac{32}{3} \partial U_{2,0} + \frac{4}{3} :J U_{2,0}: - \frac{88}{27} \partial V_{1,0} - \frac{88}{9} :J V_{1,0}: \\
			+ \frac{92}{9} \partial^2 U_{1,0}+ \frac{20}{3} :J \partial U_{1,0}:+ \frac{32}{3} :U_{0,0} U_{1,0}: - \frac{200}{9} :T U_{1,0}: \\
			+ \frac{8}{3} :\partial J U_{1,0}:+ \frac{8}{3} :J J U_{1,0}: - \frac{116}{9} \partial^2V_{0,0} + \frac{116}{9} :J \partial V_{0,0}: \\
			- \frac{8}{3} :U_{0,0} V_{0,0}: + \frac{112}{9} :T V_{0,0}: + \frac{28}{3} :\partial J V_{0,0}:	- \frac{14}{27}\partial^3 U_{0,0} \\
			- \frac{22}{3} :J\partial^2 U_{0,0}: - \frac{16}{3} :\partial U_{0,0} U_{0,0}:+ \frac{28}{9} :T\partial U_{0,0}: + \frac{2}{3} :\partial J \partial U_{0,0}: \\
			- \frac{4}{3} :J J \partial U_{0,0}:+ 16 :J U_{0,0} U_{0,0} - 16 :\partial T U_{0,0}: - \frac{112}{3} :T J U_{0,0}: \\
			+ \frac{56}{3} :U_{0,0} Q^+ Q^-:+ \frac{292}{9} :\partial^2J U_{0,0}: - 16 :\partial J J U_{0,0}: \\
			+ 8 :J J J U_{0,0}:	+ \frac{116}{27} \partial^3 T- \frac{38}{3} :\partial^2 T J: + \frac{56}{9} :\partial T \partial J: \\
			+ \frac{56}{3} :T T J:- \frac{112}{9} :T Q^+ Q^-: - \frac{742}{27} :T\partial^2 J: + \frac{56}{3} :T\partial J J: \\
			- \frac{28}{3} :T J J J:+ \frac{88}{9} :A_{1,0} Q^+: + \frac{88}{9} :B_{1,0} Q^-: - \frac{112}{9} :\partial A_{0,0} Q^+: \\
			+ 16 :A_{0,0} B_{0,0}: - \frac{112}{9} :A_{0,0} \partial Q^+: - \frac{28}{3} :J A_{0,0} Q^+: \\
			- \frac{112}{9} :\partial B_{0,0} Q^-: - \frac{112}{9} :B_{0,0} \partial Q^-: + \frac{28}{3} :J B_{0,0} Q^-: \\
			+ \frac{56}{9} :\partial^2 Q^+ Q^-: + \frac{112}{27} :\partial Q^+ \partial Q^-: - \frac{56}{9} :J\partial Q^+ Q^-: \\
			+ \frac{56}{9} :Q^+\partial^2 Q^-: + \frac{56}{9} :J Q^+\partial Q^-: - \frac{28}{3} :\partial J Q^+ Q^-: \\
			+ \frac{89}{18}\partial^4 J - \frac{17}{9} :\partial^3 JJ: - \frac{113}{9} :\partial^2J\partial J:+ \frac{190}{9} :\partial^2J J J: \\
			+ \frac{17}{3} :\partial J \partial J J: - 4 :\partial J J J J: + :J J J J J:
		\end{split}
	\end{equation}
\subsection*{Level ${k=-\frac{1}{2}}$} 
	\begin{equation}
		\begin{split}
			4 U_{1,0} - 5 V_{0,0} - 2 \partial U_{0,0} + 8 :J U_{0,0}: - 10 :T J:\\
			+ 7 \partial^2 J - 4 :\partial J J: + 2 :J J J: + 5 :Q^+Q^-:
		\end{split}
	\end{equation}
	\begin{equation}
		\begin{split}
			-3 U_{2,0} + 5 V_{1,0} + 4 \partial U_{1,0} + :J U_{1,0}: - \partial^2 U_{0,0} \\
			- :J \partial U_{0,0}:- 8 :U_{0,0} U_{0,0}:+ 10 :T U_{0,0}: + 5 :\partial J U_{0,0}: \\
			- 2 :J J U_{0,0}: + \frac{2}{3}\partial^3 J - 4 :\partial^2 J J: + 5 :Q^+ A_{0,0}:
		\end{split}
	\end{equation}
	% \begin{equation}
	% 	\begin{split}
	% 		-3 U_{2,0} + 5 V_{1,0} + 4 \partial U_{1,0} + :J U_{1,0}: - \partial^2 U_{0,0} - :J \partial U_{0,0}: \\
	% 		- 8 :U_{0,0} U_{0,0}: + 10 :T U_{0,0}: + 5 \partial J U_{0,0}: - 2 :J J U_{0,0}: + \frac{2}{3} \partial^3J \\
	% 		- 4 :\partial^2J J: + 5 :Q^+ A_{0,0}:
	% 	\end{split}
	% \end{equation}
	% \begin{equation}
	% 	\begin{split}
	% 		3 U_{2,0} + 3 V_{1,0} - \frac{3}{2} \partial V_{0,0} + 3 :J V_{0,0}: - 3 \partial U_{1,0} + 12 :J U_{1,0}: \\ 
	% 		+ 3 \partial^2 U_{0,0} - 6 :J \partial U_{0,0}: - 8 :U_{0,0} U_{0,0}: + 4 :T U_{0,0}: + 8 :\partial J U_{0,0}: \\
	% 		- 2 :J J U_{0,0}: + \frac{3}{4} \partial ^2T - \frac{3}{2}:\partial T J: - \frac{3}{2}:T T: - 2 :T\partial J: + 4 :T J J: \\
	% 		+ \frac{43}{24} \partial ^3J - \partial{43}{4} :\partial^2J J: + \frac{23}{8} :\partial J \partial J: + :\partial J J J:
	% 	\end{split}
	% \end{equation}
\subsection*{Level ${k=1}$}
		\begin{equation}
			(\Sigma^{(0)-}_{0,0})_{(2)} \Sigma^{(0)+}_{0,0}\propto-2 U_{0,0} + \partial J + :J J:
		\end{equation}
		\begin{align}
			\begin{split}
				(\Sigma^{(0)-}_{0,0,0})_{(3)} \Sigma^{(0)+}_{0,0,0}\propto\hspace{2pt} & 30 U_{1,0} - 18\partial U_{0,0} - 12:JU_{0,0}: \\
				& + :JJJ:+ 9:\partial J J: + 4\partial^2 J
			\end{split}
		\end{align}
		% \begin{equation}
		% 	-3 U_{1,0} + \partial U_{0,0} + :J U_{0,0}:
		% \end{equation}
		\begin{align}
			\begin{split}
				(\Sigma^{(0)-}_{0,0,0,0})_{(4)} \Sigma^{(0)+}_{0,0,0,0}\propto\hspace{2pt} & -480 U_{2,0} + 564\partial U_{1,0} - 174\partial^2 U_{0,0} \\
				& + 36:U_{0,0}U_{0,0}: + 288:JU_{1,0}: \\
				& - 180:J\partial U_{0,0}: - 36 :JJU_{0,0}: \\
				& - 72:\partial JU_{0,0}:+ :JJJJ:+ 30 :\partial JJJ: \\
				& +39 :\partial J\partial J: + 58 :\partial^2 JJ:+ 21 \partial^3 J
			\end{split}
		\end{align}
		% \begin{equation}
		% 	-2 U_{2,0} + \partial U_{1,0} + :J U_{1,0}:
		% \end{equation}
\subsection*{Level ${k=2}$} 
		\begin{align}
			\begin{split}
				(\Sigma^{(0)-}_{0,0,0})_{(3)} \Sigma^{(0)+}_{0,0,0}\propto\hspace{2pt} & 12 U_{1,0} - 6 \partial U_{0,0} - 6 :J U_{0,0}: \\
				& + \partial^2 J + 3 :\partial J J: + :J J J:
			\end{split}
		\end{align}
		\begin{align}
			\begin{split}
				(\Sigma^{(0)-}_{0,0,0,0})_{(4)} \Sigma^{(0)+}_{0,0,0,0}\propto\hspace{2pt} & -252 U_{2,0} + 276\partial U_{1,0} - 84\partial^2 U_{0,0} \\ 
				& + 18 :U_{0,0},U_{0,0}: + 156 :J U_{1,0}: \\
				& - 90 :J\partial U_{0,0}: - 24 :J J U_{0,0}: \\
				& - 30 :\partial J U_{0,0}:+ :JJJJ: \\
				& + 18:\partial JJJ: + 15 :\partial J \partial J: \\
				& + 25 :\partial^2 JJ: + 11\partial^3J
			\end{split}
		\end{align}
		% \begin{equation}
		% 	\begin{split}
		% 		-84 U_{2,0} + 72 \partial U_{1,0} + 48 :J U_{1,0}: - 18 \partial^2 U_{0,0} - 18 :J \partial U_{0,0}: \\
		% 		+ 6 :U_{0,0} U_{0,0}: - 6 :J J U_{0,0}: + 2 \partial^3J + 3 :\partial^2 J J:
		% 	\end{split}
		% \end{equation}
\subsection*{Level ${k=3}$} 
		\begin{align}
			\begin{split}
				(\Sigma^{(0)-}_{0,0,0,0})_{(4)} \Sigma^{(0)+}_{0,0,0,0}\propto &	-90 U_{2,0} + 90 \partial U_{1,0} + 60 :J U_{1,0}: - 27 \partial^2 U_{0,0} \\
				& - 30 :J \partial U_{0,0}: + 6 :U_{0,0} U_{0,0}: - 6 :\partial J U_{0,0}: \\
				&- 12 :J J U_{0,0}: + 4 \partial^3J + 7 :\partial^2J J: \\
				& + 3 \partial J\partial J: + 6 :\partial J J J: + :J J J J:
			\end{split}
		\end{align}
\subsection*{Level ${k=4}$} Note that the field $U_{3,0}$ appears in the expression below. Using \ref{Appendix: Decoupling relation for U_{3,0}} we see that the singular field induces a decoupling relation for the field $V_{2,0}$ in the simple quotient.
	\begin{align}
			% \begin{split}
			% 	1320 V_{2,0} - 1980 \partial U_{2,0} - 330 :J U_{2,0}: - 3300 \partial V_{1,0} - 660 :J V_{1,0}: \\
			% 	+ 2370\partial^2 U_{1,0} + 660 :J \partial U_{1,0}: - 120:U_{0,0} U_{1,0}: - 660 :T U_{1,0}: \\
			% 	+ 390 :\partial J U_{1,0}: + 180 :J J U_{1,0}: + 1650\partial V_{0,0} + 660 :J \partial V_{0,0}: \\
			% 	+ 660 :U_{0,0} V_{0,0}: - 630\partial^3 U_{0,0} - 180 :J\partial^2 U_{0,0}: + 60 :\partial U_{0,0} U_{0,0}: \\
			% 	- 60 :\partial J \partial U_{0,0}: - 90 :J J \partial U_{0,0}: + 30 :J U_{0,0} U_{0,0}: \\
			% 	- 660 :\partial T U_{0,0}: - 50 :\partial^2J U_{0,0}: - 30 :\partial J J U_{0,0}: - 20 :J J J U_{0,0}: \\
			% 	- 550 \partial^3T - 330 :\partial ^2T J: + 660 :A_{1,0} Q^+: + 660 :B_{1,0} Q^-: \\
			% 	+ 660 :A_{0,0} B_{0,0}:	+ 101\partial^4 J - 40 :\partial^3J J: + 10 :\partial^2J\partial J: \\
			% 	+ 25 \partial^2J J J:+ 15 :\partial J \partial J J: + 10 :\partial J J J J: + :J J J J J:
			% \end{split}
		\begin{split}
			(\Sigma^{(0)-}_{0,0,0,0,0})_{(4)} \Sigma^{(0)+}_{0,0,0,0,0} \propto\hspace{2pt} & 880 U_{3,0} - 1320\partial U_{2,0}	+ 1080\partial^2 U_{1,0} - 280\partial^3 U_{0,0} \\
			&- 120:U_{1,0}U_{0,0}: + 60:\partial U_{0,0}U_{0,0}:+ 30 :JU_{0,0}U_{0,0}:\\
			& - 660:JU_{2,0}:+ 660:J\partial U_{1,0}: - 240:J\partial^2 U_{0,0}:	\\
			& + 60:\partial JU_{1,0}: - 60:\partial J\partial U_{0,0}:+180:JJU_{1,0}: \\
			& - 90:JJ\partial U_{0,0}:- 20:JJJU_{0,0}:-30:\partial J J U_{0,0}: \\
			& + 10:\partial^2 J U_{0,0}: + :J J J J J:+ 10 :\partial J J J J: \\
			& + 15 :\partial J\partial J J:+ 25 :\partial^2 J J J: + 10 :\partial^2 J\partial J: \\
			&+ 55 :\partial^3J J:+51 \partial^4J
		\end{split}
	\end{align}

\section{Singular fields in $V^k(\mathfrak{n}_4)^{\mathbb{Z}/2\mathbb{Z}}$}
\label{app: singular fields in cyclic orbifold}
Note that some singular fields in this section involve the strong generator $U_{2,0}$ which decouples at all levels $k\neq16$ (see \ref{Appendix: Decoupling relation for U_(2,0)}).

\subsection*{Level ${k=-\frac{5}{2}}$} 
		\begin{equation}
			\begin{split}
				U_{2,0} - V_{1,0} + \frac{1}{2} \partial V_{0,0} - :J V_{0,0}: - \partial U_{1,0} + \frac{3}{4} \partial^2 U_{0,0} - :U_{0,0} U_{0,0}: \\
				- :T U_{0,0}: + :\partial J U_{0,0}: - \frac{1}{2} :J J U_{0,0}: - \frac{5}{8} \partial^2T + \frac{3}{4} :T T: \\
				+ \frac{1}{2}:T \partial J:	- \frac{1}{4} :T J J:- \frac{1}{8} \partial^3J - \frac{1}{8} :\partial J \partial J: + \frac{1}{4} :\partial J J J: \\
				- \frac{1}{16}:J J J J:	+ \frac{3}{2}:\partial Q^+ Q^-:	- \frac{3}{2}:Q^+ \partial Q^-: + :J Q^+ Q^-: \\
				- 2 :Q^+ A_{0,0}: + 2 :Q^- B_{0,0}:
			\end{split}
		\end{equation}
\subsection*{Level ${k=-\frac{3}{2}}$} 
		\begin{equation}
			4 U_{0,0} - 2 T - 2 \partial J + :JJ:
		\end{equation}
		\begin{equation}
			\begin{split}
			-4 U_{1,0} + 6 \partial U_{0,0} + 4 :J U_{0,0}: - 3 \partial T \\
			- 4 :T J: + :J J J: + 2 :Q^+Q^-:
			\end{split}
		\end{equation}
		\begin{equation}
			\begin{split}
			-5 \Sigma^{(1)+}_{1,0} + 3 \partial\Sigma^{(1)+}_{0,0} + :J\Sigma^{(1)+}_{0,0}: + 2 :Q^+ \Sigma^{(0)+}_{0,0}:
			\end{split}
		\end{equation}
		% \begin{align}
		% 	\Sigma^{(2)+}_{1,0} + :Q^+ \Sigma^{(1)+}_{0,0}:
		% \end{align}
\subsection*{Level ${k=-\frac{4}{3}}$}  
		\begin{equation}
			\begin{split}
				-1811 U_{2,0} + 12 V_{1,0} - 6\partial V_{0,0} - 9 :J V_{0,0}: + 691\partial U_{1,0} \\
				+ 630 :J U_{1,0}:- 18 :Q^+ A_{0,0}:+ 18 :Q^- B_{0,0}: - 44\partial^2 U_{0,0} \\
				- 105 :J\partial U_{0,0}:+ 33 :U_{0,0} U_{0,0}: + 96:T U_{0,0}:+ 177 :\partial J U_{0,0}: \\
				- 36 :J J U_{0,0}:+ \frac{15}{2}\partial^2 T - 23 :T T: - 48 :T\partial J: + 24 :T J J:\\
				- 105 :\Sigma^{(0)+}_{0,0}\Sigma^{(0)-}_{0,0}:+ \frac{53}{12}\partial^3 J- \frac{99}{4} :\partial J\partial J:+ 18 :\partial J J J: \\
				- \frac{9}{2} :J J J J:	+ 24 :\partial Q^+ Q^-: - 24 :Q^+\partial Q^-: + 9 :J Q^+ Q^-:
			\end{split}
		\end{equation}
\subsection*{Level ${k=-\frac{1}{2}}$} 
		\begin{equation}
			\begin{split}
				4 U_{1,0} - 5 V_{0,0} - 2 \partial U_{0,0} + 8 :J U_{0,0} - 10 :T J: \\
				+ 7 \partial^2 J - 4 :\partial J J: + 2 :J J J: + 5 :Q^+Q^-:
			\end{split}
		\end{equation}
		\begin{equation}
			\begin{split}
				-15 \Sigma^{(1)+}_{1,0} + 8 \partial \Sigma^{(1)+}_{0,0} + 2 :J \Sigma^{(1)+}_{0,0}: + 4 :Q^+ \Sigma^{(0)+}_{0,0}:
			\end{split}
		\end{equation}
		% \begin{equation}
		% 	\begin{split}
		% 		-30 \Sigma^{(0)+}_{2,0} - 4 :Q^- \Sigma^{(1)+}_{0,0}: + 8 :T \Sigma^{(0)+}_{0,0}: \\
		% 		+ 7 \partial^2\Sigma^{(0)+}_{0,0}	- 2 :J\partial\Sigma^{(0)+}_{0,0}: + 4 :\partial J \Sigma^{(0)+}_{0,0}:
		% 	\end{split}
		% \end{equation}
		% \begin{equation}
		% 	\begin{split}
		% 		3 \Sigma^{(2)+}_{1,0} + 2 :Q^+\Sigma^{(1)+}_{0,0}:
		% 	\end{split}
		% \end{equation}
\subsection*{Level ${k=1}$}
		\begin{equation}
			\Sigma^{(0)+}_{0,0}
		\end{equation}
		\begin{equation}
			(\Sigma^{(0)-}_{0,0})_{(2)} \Sigma^{(0)+}_{0,0}\propto-2 U_{0,0} + \partial J + :J J:
		\end{equation}
		\begin{align}
			\begin{split}
				(\Sigma^{(0)-}_{0,0,0})_{(3)} \Sigma^{(0)+}_{0,0,0}\propto\hspace{2pt} & 30 U_{1,0} - 18\partial U_{0,0} - 12:JU_{0,0}: \\
				& + :JJJ:+ 9:\partial J J: + 4\partial^2 J
			\end{split}
		\end{align}
		% \begin{equation}
		% 	-2 U_{0,0} + \partial J + :J J:
		% \end{equation}
		% \begin{equation}
		% 	-3 U_{1,0} + \partial U_{0,0} + :J U_{0,0}:
		% \end{equation}
\subsection*{Level ${k=2}$}
		\begin{align}
			\begin{split}
				(\Sigma^{(0)-}_{0,0,0})_{(3)} \Sigma^{(0)+}_{0,0,0}\propto\hspace{2pt} & 12 U_{1,0} - 6 \partial U_{0,0} - 6 :J U_{0,0}: \\
				& + \partial^2 J + 3 :\partial J J: + :J J J:
			\end{split}
		\end{align}
		% \begin{equation}
		% 	12 U_{1,0} - 6\partial U_{0,0} - 6 :J U_{0,0}: + \partial^2 J + 3 :\partial JJ: + :J J J:
		% \end{equation}
		\begin{equation}
			\Sigma^{(0)+}_{2,0} - :U_{0,0} \Sigma^{(0)+}_{0,0}: + 2 :\partial J \Sigma^{(0)+}_{0,0}:
		\end{equation}
\subsection*{Level ${k=3}$}
		\begin{equation}
			\begin{split}
				4 \Sigma^{(0)+}_{2,0} - 2 :U_{0,0} \Sigma^{(0)+}_{0,0}: + 5 \partial^2 \Sigma^{(0)+}_{0,0} \\
				- 8 :J \partial\Sigma^{(0)+}_{0,0}: - :\partial J \Sigma^{(0)+}_{0,0}: + 3 :J J \Sigma^{(0)+}_{0,0}:
			\end{split}
		\end{equation}

%--------------------------------------------------------------------------------------------------------%

% \addcontentsline{toc}{section}{References}

%\clearpage
% \addcontentsline{toc}{chapter}{Bibliography}
\footnotesize						%sets font of bibliography
%\bibliography{sources}
\bibliographystyle{amsalpha}
\nocite{*}							%displays references that are not cited

\providecommand{\bysame}{\leavevmode\hbox to3em{\hrulefill}\thinspace}
\providecommand{\MR}{\relax\ifhmode\unskip\space\fi MR }
% \MRhref is called by the amsart/book/proc definition of \MR.
\providecommand{\MRhref}[2]{%
  \href{http://www.ams.org/mathscinet-getitem?mr=#1}{#2}
}
\providecommand{\href}[2]{#2}

\end{document}